\theoremstyle{plain}
\newtheorem{theorem}{Theorem}[section]
\newtheorem{lemma}[theorem]{Lemma}
\newtheorem{proposition}[theorem]{Proposition}
\theoremstyle{definition}
\newtheorem{remark}[theorem]{Remark}
\newtheorem{definition}[theorem]{Definition}
\newtheorem{example}[theorem]{Example}
\numberwithin{equation}{section}
\newcommand{\R}{\mathbb{R}}
\newcommand{\C}{\mathbb{C}}
\newcommand{\N}{\mathbb{N}}
\renewcommand\Re{\operatorname{Re}}
\renewcommand\Im{\operatorname{Im}}
\newcommand{\ov}[1]{\overline{#1}}
\DeclareMathOperator{\arcosh}{arcosh}
\DeclareMathOperator{\arsinh}{arsinh}
\newcommand\p{\partial}
\begin{document}

%\title{Cones with low regularity section that always scatter}
%\title{Cones with complex geometry that always scatter}
% \title{Cones with complicated geometry that always scatter}
\title{Cones with convoluted geometry that always scatter or radiate}

\author{Emilia L.K. Bl{\aa}sten}\address{Emilia L.K. Bl{\aa}sten,
  Department of Mathematics and Systems Analysis, Aalto University,
  FI-00076 Aalto, Finland \textnormal{and} Division of Mathematics, Tallinn
  University of Technology Department of Cybernetics, 19086 Tallinn,
  Estonia}\email{emilia.blasten@iki.fi}

\author{Valter Pohjola} 
\address{Valter Pohjola, BCAM -- Basque Center for Applied Mathematics, Bilbao, Spain}
\email{valter.pohjola@gmail.com}

\date{}

\begin{abstract}  
We investigate fixed energy scattering %or time-harmonic scattering from
from conical potentials having an irregular cross-section. The
incident wave can be any arbitrary non-trivial Herglotz wave. We show
that a large number of such local conical scatterers scatter all
incident waves, meaning that the far-field will always be non-zero. In
essence there are no incident waves for which these potentials would
seem transparent at any given energy. We show more specifically that
there is a large collection of star-shaped cones whose local
geometries always produce a scattered wave. In fact, except for a
countable set, all cones from a family of deformations between a
circular and a star-shaped cone will always scatter any non-trivial
incident Herglotz wave. Our methods are based on the use of spherical
harmonics and a deformation argument. We also investigate the related
problem for sources. In particular if the support of the source is locally a thin cone,
with an arbitrary cross-section, then it will produce a non-zero far-field.
\end{abstract}

\maketitle

\tableofcontents

\section{Introduction}

\noindent
When a wave of constant wavenumber encounters a penetrable obstruction, it typically
produces a scattered wave. The scattered wave transmits information about the obstruction,
which is a fact that is exploited by various imaging modalities \cite{colton96_simpl_method_solvin_inver_scatt, kirsch98_charac_shape_scatt_obstac_using}.
% and this provides the starting point of various imagining modalities.
However, an incident wave 
does not necessarily scatter even in the presence of a scatterer \cite{cakoni21_trans_eigen}.
In the quantum mechanical context we call the energy of such a wave function 
a \emph{non-scattering energy}. When dealing with 
with acoustic or electromagnetic scattering, we use the term \emph{non-scattering wavenumber} instead.
It might happen that a potential or an obstruction
is effectively transparent at certain fixed energies or wavenumbers, producing
a zero far-field irrespective of the incident wave (see e.g.
\cite{grinevich95_trans_poten_at_fixed_energ_dimen_two,newton62_const_poten_from_phase_shift,regge59_introd_to_compl_orbit_momen}).
A striking converse to this type of phenomenon was first studied in \cite{blaasten14_corner_alway_scatt}, where the authors 
showed that an obstacle with a corner always scatters, i.e. it always produces
a scattered wave that does not decay quickly, regardless of what the incident wave is or what energy it has.
This type of obstacle is not transparent to any incident wave, i.e. it is always visible. 
The absence of non-scattering energies has in recent years been studied by a number of authors, see e.g.
\cite{blaasten17_vanis_near_corner_trans_eigen,blaasten17_adden_to,blaasten21_scatt_by_curvat_radiat_sourc,blaasten14_corner_alway_scatt,cakoni21_singul_almos_alway_scatt,cakoni21_corner_scatt_operat_diver_form,elschner15_corner_edges_alway_scatt,elschner17_acous_scatt_from_corner_edges_circul_cones,hu16_shape_ident_inver_medium_scatt,paeivaerinta17_stric_convex_corner_scatt}.
One of the themes of this paper is to study the absence of non-scattering-energies for potentials that
are obtained from irregular cones. 

It should also be pointed out that non-scattering energies are closely related to the so called 
\emph{interior transmission eigenvalues} \cite{cakoni13_trans_eigen_inver_scatt_theor}. These are, in the acoustic setting, eigenvalues for a certain
non elliptic and non self-adjoint spectral problem on the support of an inhomogenity of the refractive index.
A non-scattering-energy is a always an interior transmission eigenvalue when the
scatterer has compact support. The converse does not hold. It is well known that transmission eigenfunctions can be approximated by normalized Herglotz waves \cite{weck03_approx_by_hergl_wave_funct}. Such waves will produce arbitrarily small scattering. However this does not imply that there would be an incident wave producing \emph{no scattering}. This only happens in the case of a non-scattering energy, and only if this sequence of Herglotz waves aproximates a non-scattering incident wave.
The study of interior transmission eigenvalues has it roots in 
the analysis of certain numerical reconstruction methods in inverse scattering, namely
the linear sampling method and the factorization method (for more on these see \cite{colton96_simpl_method_solvin_inver_scatt,colton19_inver_acous_elect_scatt_theor,kirsch98_charac_shape_scatt_obstac_using,kirsch08_factor_method_inver_probl}).
Energies which admit interior transmission eigenvalues cause challenges for these methods.
For a survey on the topic of interior transmission eigenvalues see \cite{cakoni13_trans_eigen_inver_scatt_theor}.

Another topic related to non-scattering are sources that produce no wave in the far field. A source $f$ in acoustic
scattering $(\Delta+k^2)u=f$ produces a far-field $u^\infty(\theta) = C \widehat{f}(k\theta)$, for details, see
\cite{blaasten21_scatt_by_curvat_radiat_sourc}. Therefore understanding source
scattering at a fixed wavenumber is related to understanding the Fourier
restriction problem \cite{grafakos04_class_fourier}.
By a nonradiating source, we mean a source that does not produce a wave in the far field
at some wavenumber or energy.
The topic of sources that radiate at all energies was first studied in \cite{blaasten18_nonrad_sourc_trans_eigen_vanis}, where it was shown that
that nonradiating sources having a convex or nonconvex corner or edge on their boundary must vanish there.
Subsequent studies of sources that always radiate include \cite{blaasten18_radiat_non_radiat_sourc_elast,blaasten21_scatt_by_curvat_radiat_sourc,blaasten21_elect_probl_corner_its_applic}.
Here we extend the results in \cite{blaasten18_nonrad_sourc_trans_eigen_vanis} by showing that sources obtained from very thin or very wide cones will always produce a radiating
wave irrespective of their cross section.

\medskip
\noindent
We will describe very briefly and in a non technical wave the various
concepts needed to understand our theorems. This is also to fix
notation. Proper mathematical definitions are given in
Section~\ref{sec_prelim}.

Recall that the scattering of an incident particle wave of energy
$\lambda>0$ by a potential $V$ can be modeled by the equation
\begin{equation} \label{eq_Schrodinger0}
  (-\Delta + V -\lambda) u = 0
\end{equation}
in $\R^n$. Here $u$ is the superposition of the wave function $u_i$ of
an incident particle and a scattered wave $u_s$, which is created by
the interaction between $u_i$ and the scatterer $V$. The scattered
wave will have an asymptotic expansion, and a detectable part of that
is called the scattering amplitude or far-field pattern.

By a non-scattering energy for a potential $V$, we mean a $\lambda>0$
for which there is an incident wave which produces no scattering from
$V$ at energy $\lambda$. In other words, the scattering amplitude is
zero even though the incident wave is not. A potential which
\emph{always scatters} is one that does not have any non-scattering
energies.

\medskip
\noindent
We will analyze potentials whose support has a conical singularity on
its boundary. We will consider cones $C \subset \R^n$, $n=3$, that are
determined by a compact cross section $K \subset \R^{n-1}$, $0 \in
\operatorname{int} K$ in suitable coordinates by
\begin{align}  \label{eq_cone}
C = \big\{  (tx',t) \in \R^n \;:\; x' \in K ,\,t \geq 0  \big\}.
\end{align}
For technical reasons, we will also need some regularity assumptions.
\begin{definition} \label{def_regcone}
We call a cone $C$ of the form \eqref{eq_cone} \emph{regular} if the
following conditions are satisfied.
\begin{enumerate}[(i)]
\item $C$ is contained in a strictly convex closed circular cone,
\item $C$ has a connected exterior, and
\item $C$ has a bounded cross-section $K\subset\mathbb R^2$ such that
  $\chi_K\in H^\tau(\mathbb R^2)$ for some $\tau\in (1/4,1/2)$.
\end{enumerate}
\end{definition}
\noindent
We will in particular be interested in star-shaped cones, which we
define as follows.
\begin{definition} \label{def_starCone0}
We say that a cone $C$ of the form \eqref{eq_cone} is
\emph{star-shaped} if there exists a continuous $\sigma \colon [0,2\pi] \to
(\rho_0,\pi/2)$, $\rho_0 \in (0,\pi/2)$ with $\sigma(0)=\sigma(2\pi)$
and
\begin{equation} \label{eq_starShaped}
  C \cap \mathbb S^2 := \big\{ (\sin\vartheta\cos\varphi,
  \sin\vartheta\sin\varphi,\cos\vartheta) \in \mathbb S^2 \;:\;
  \varphi \in [0,2\pi),\, 0\leq \vartheta < \sigma(\varphi) \big\}.
\end{equation}
Such a cone is denoted by $C^\sigma$ and we usually take both $\sigma$
and $\rho_0$ given by the context.
\end{definition}

\noindent
We will relate a conical potential to a cone as follows.
Given a cone $C$ with coordinates chosen as in \eqref{eq_cone}, let $V_C$ be the potential 
\begin{align}  \label{eq_VC}
V_C := \varphi\,\chi_C+\Phi,
\end{align}
where $\varphi\in C^{1/4+\varepsilon}_{\mathrm c}(\mathbb R^3)$, $\epsilon >0 $,
and $\Phi\in e^{-\gamma\left|\cdot\right|}\,L^2(\mathbb R^3)$, $\gamma > 0$, 
and $\operatorname{supp} \Phi \subset \{ x_3 > 0\}$.

Next, we state our main result for scattering from a potential. The theorem
is a direct consequence of Theorem~\ref{thm_admScatter} and
Proposition~\ref{prop_affine} (in the latter set $C^\sigma_1 = C^\sigma$,
to obtain the theorem).

\begin{theorem} \label{thm_starScat}
Assume that $C^\sigma$ is a star-shaped cone that is regular, in the sense of definition
\ref{def_regcone}. 
Then for all $\delta >0$, there exists a cone $C$, 
such that the Hausdorff distance\footnote{  
Here we use the \emph{Hausdorff distance} 
between the sets $A$ and $B$ which is given by
$$
\operatorname{dist}_H(A,B) := \max \Big( \sup_{x\in A}d(x,B), \,\sup_{y\in B}d(y,A) \Big).
$$
where $d(x,A) := \inf_{y \in A} d(x,y)$, and $d$ is induced by the Euclidean metric.
}
$$
\operatorname{dist}_H \big(\p C \cap S^2, \p C^\sigma \cap S^2 \big) < \delta,
$$
and such that all conical potentials $V_C$ of the form \eqref{eq_VC} always scatter.
\end{theorem}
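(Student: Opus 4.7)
The plan is to read Theorem~\ref{thm_starScat} as a direct combination of the two results cited immediately before its statement. Proposition~\ref{prop_affine}, applied with $C^\sigma_1 = C^\sigma$, should furnish a one-parameter family $\{C^\sigma_t\}_{t\in[0,1]}$ of cones defined by an affine deformation between a reference circular cone and $C^\sigma$, with the property that the set $E\subset[0,1]$ of parameter values for which $C^\sigma_t$ fails to be \emph{admissible} (in whatever sense Theorem~\ref{thm_admScatter} requires) is at most countable. Theorem~\ref{thm_admScatter} in turn asserts that admissible cones always scatter, so for every $t\in[0,1]\setminus E$ the potential $V_{C^\sigma_t}$ has no non-scattering energies.

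First I would invoke Proposition~\ref{prop_affine} to obtain the family $\{C^\sigma_t\}$ and the exceptional set $E$. Since the deformation is affine, the radial function $\sigma_t$ that determines the cross-section $C^\sigma_t\cap S^2$ depends linearly on $t$ in the uniform norm, so the assignment $t\mapsto\partial C^\sigma_t\cap S^2$ is continuous in the Hausdorff metric on $S^2$: a uniform perturbation of $\sigma$ by $\varepsilon$ moves the corresponding boundary curve on the sphere by a comparable amount. Hence, given $\delta>0$, there exists $\eta>0$ such that
$$
  \operatorname{dist}_H\bigl(\partial C^\sigma_t\cap S^2,\,\partial C^\sigma\cap S^2\bigr)<\delta
$$
for every $t\in(1-\eta,1]$.

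Because $E$ is countable while $(1-\eta,1]$ is uncountable, the set $(1-\eta,1]\setminus E$ is nonempty; pick any $t^*$ in it and define $C:=C^\sigma_{t^*}$. By construction $C$ lies within Hausdorff distance $\delta$ of $C^\sigma$ on the unit sphere, and Theorem~\ref{thm_admScatter} applied to $C$ shows that every conical potential $V_C$ of the form \eqref{eq_VC} always scatters, which is precisely the conclusion of the theorem.

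The present deduction is therefore a soft density argument, and the real content is packaged into the two inputs. The principal obstacle is Proposition~\ref{prop_affine}: one must show that admissibility can fail only on a countable subset of the deformation parameter. I would expect this to be carried out by expressing failure of admissibility as the vanishing of a scalar function built from the spherical harmonic expansion of the parameter-dependent characteristic function $\chi_{C^\sigma_t}$, and then appealing to the analytic (or at worst meromorphic) dependence of that function on $t$ to conclude that its zero set is discrete, and in particular countable.
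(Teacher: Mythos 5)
Your argument is correct and is essentially the paper's own proof: the authors also obtain Theorem~\ref{thm_starScat} by applying Proposition~\ref{prop_affine} with $C^\sigma_1=C^\sigma$, noting that the countable exceptional set cannot exhaust parameters arbitrarily close to $1$, using continuity of the star-shaped deformation to get the Hausdorff-distance bound, and then invoking Theorem~\ref{thm_admScatter} for the chosen admissible cone. Your closing remarks about analyticity in the deformation parameter likewise match how Proposition~\ref{prop_affine} is actually proved (via Lemma~\ref{lem_analycity2} and the determinants $\mathcal{D}_{N,\sigma}$).
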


\begin{remark}
  We remark that more can be said: If we deform $C^\sigma$
  continuously to a circular cone according to
  Definition~\ref{def_starShaped}, then all the cones in that family
  will have a shape that always scatters except possibly for a countable set of
  exceptions given by Proposition~\ref{prop_affine}. 
\end{remark}

\noindent
The main novelty of Theorem~\ref{thm_starScat} is that it shows that
there are a large number of conical potentials that always scatter,
which have an irregular geometry, and the scattering happens for
\emph{any} incident Herglotz wave. Earlier results in the case of
$\R^3$, have dealt with fairly regular geometries such as corners
between hyperplanes \cite{blaasten14_corner_alway_scatt} and circular
cones or curvilinear polyhedra
\cite{elschner15_corner_edges_alway_scatt,elschner17_acous_scatt_from_corner_edges_circul_cones,paeivaerinta17_stric_convex_corner_scatt}).
Where \cite{paeivaerinta17_stric_convex_corner_scatt} uses a similar
approach as we do here, utilizing complex geometrical optics
solutions, and
\cite{elschner15_corner_edges_alway_scatt,elschner17_acous_scatt_from_corner_edges_circul_cones}
use techniques from the theory of boundary value problems in corner
domains. More recent results apply to more complicated geometries,
such as \cite{cakoni21_singul_almos_alway_scatt} using stationary
phase method and \cite{salo21_free_bound_method_non_scatt_phenom}
using the methods from the theory of free boundary value
problems. These results are not applicable to a general Herglotz
incident wave and instead assume that it does not vanish on the
boundary. Non vanishing is a major technical simplification. The proof
in \cite{blaasten14_corner_alway_scatt} would be much shorter if one
would assume that $u_i(0) \neq 0$ there. This is also demonstrated in
this paper between Section~\ref{sec_source} and sections
\ref{sec_cgo}, \ref{sec_admissible}. The main difficulty is as
follows: if one assumes the source problem or a non vanishing incident
wave, then one only needs to prove the non vanishing of an integral
involving spherical harmonics of degree 2, as in
Definition~\ref{admissible-source-corner}. With a general incident
wave one needs instead to prove the non vanishing of the integral for
all degrees, as in Definition~\ref{def_admissible-medium-corner}.

\medskip
\noindent
Let us discuss briefly the proof of Theorem~\ref{thm_starScat}. Our
starting point is the use of complex geometrical optics solutions and
the theory of spherical harmonics. We use the latter to derive a
projection condition for cones which will guarantee potentials that
always scatter. We call such cones \emph{admissible medium cones}. See
Definition~\ref{def_admissible-medium-corner}.

The admissibility of a cone can be further verified by means of
certain determinants being non zero. We use this determinant condition
to prove that circular cones are admissible medium cones (and that
they therefore always scatter) by computing explicit formulas for the
determinants. This gives an alternative proof for the three
dimensional result of
\cite{elschner17_acous_scatt_from_corner_edges_circul_cones,paeivaerinta17_stric_convex_corner_scatt}
without the issue of a countable set of unhandled cones.

Testing for the determinant condition requires us to use certain results on
the associated Legendre polynomials. We derive them in
Section~\ref{sec_assocLegendre}. In particular, we need a modification
of the classical Christoffel-Darboux formula in page 43 of
\cite{szego75_orthog_polyn}. Then to analyze potentials related to star-shaped cones,
we perform a deformation argument, where we interpolate between the points of 
a circular cone and a star-shaped cone. Spherical harmonics are analytic, which
can be used to show that the determinants of the
deformed cones depend analytically on the deformation
parameter. Analyticity then guarantees that certain critical integrals
cannot vanish except in a countable set of points, and Theorem~\ref{thm_starScat} follows.

We would like to further point out that even though our construction
of complex geometrical optics solutions follows the argument of
\cite{paeivaerinta17_stric_convex_corner_scatt}, we need to modify
their argument to work with cones with irregular cross section. This
is done in Section~\ref{sec_cgo}, where we also provide some explicit
examples of cones with rather complex geometry to which our argument
applies but previous arguments didn't.

\medskip
\noindent
We turn to the related problem of sources that radiate at every energy
level next. Several types of source problems in wave propagation can
be modeled by the Helmholtz or static Schr\"odinger equation. We will
consider the following,
\begin{align}  \label{eq_source_prob}
  (-\Delta - \lambda) u = f
\end{align}
in $\R^n$ together with a radiation condition at infinity, the
Sommerfeld radiation condition \eqref{eq_SRC}. It selects the outgoing
wave from all possible solutions to \eqref{eq_source_prob}.
The above problem models a wave $u$ created by the source $f$ that is
oscillating at energy $\lambda>0$ and is radiating out to infinity.
We will consider sources of the form
\begin{align} \label{eq_source_term}
f = \varphi \chi_C + \Phi,
\end{align}
where $\varphi \in L^\infty(\R^n)$ is compactly supported, $\Phi\in
e^{-\gamma\left|\cdot\right|}\,L^2(\mathbb R^n)$ with $\gamma > 0$ is
supported on a half-space, and $C$ is a cone of the form
\eqref{eq_cone} with vertex outside the above-mentioned half-space. It
can be shown that the wave $u$ has the asymptotic expansion
\begin{equation}\label{eq_source_asympt}
  u(x) = \frac{e^{i\sqrt{\lambda}|x|}}{|x|^\frac{n-1}{2}}
  \alpha_r(\theta) + O\big(|x|^{-\frac{n+1}{2}} \big).
\end{equation}
The function $\alpha_r\in L^2(\mathbb S^{n-1})$ is called the
\emph{far-field pattern} radiated by $f$. We say that a source term
$f$ always radiates if $\alpha_r \neq 0$ for all $\lambda > 0$.

We will be interested in sources $f$ determined by a fairly general
family of cones that we call \emph{admissible source cones},
Definition~\ref{admissible-source-corner}. These cones can have very
irregular cross sections, e.g. with a fractal boundary. Our main
result concerning these types of sources is a direct consequence of
Theorem~\ref{source-theorem} in Section~\ref{sec_source}.

\begin{theorem} \label{thm_source}
Assume that $C\subset\mathbb R^3$ is a cone of the form
\eqref{eq_cone}, that is also an admissible source cone in the sense
of Definition~\eqref{admissible-source-corner}.  Suppose furthermore
that $f$ is the source term \eqref{eq_source_term}, where $\varphi$ is
H\"older continuous at $0$ and $\varphi(0) \neq 0$. Then for all
$\lambda > 0$, we have that $\alpha_r \neq 0$ in
\eqref{eq_source_asympt}. That is, $f$ always radiates.
\end{theorem}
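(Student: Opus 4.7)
The plan is to argue by contradiction, using free complex geometrical optics (CGO) exponentials in the spirit of the corner-scattering scheme of \cite{blaasten14_corner_alway_scatt,blaasten18_nonrad_sourc_trans_eigen_vanis}. Suppose $\alpha_r \equiv 0$ at some $\lambda > 0$. By Rellich's lemma combined with the Sommerfeld radiation condition, the radiating solution $u$ of \eqref{eq_source_prob} vanishes outside a large ball. Since $\operatorname{supp} f \subset \{x_3 \geq 0\}$ and the complement $\{x_3 < 0\}$ is unbounded and connected, unique continuation forces $u \equiv 0$ on $\{x_3 < 0\}$, and continuity of $u \in H^2_{\mathrm{loc}}$ gives vanishing Cauchy data on the hyperplane $\{x_3 = 0\}$. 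Green's identity on $\{x_3 > 0\} \cap B_R$, with $R$ so large that $u$ already vanishes on $\partial B_R$, then yields
\begin{equation*}
  \int_{\mathbb R^3} f(x)\, v(x)\, dx = 0
\end{equation*}
for every entire solution $v$ of $(-\Delta - \lambda) v = 0$.

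The test family is the free CGO $v_\tau(x) = e^{\zeta(\tau) \cdot x}$ with $\zeta(\tau) = -\tau\, \mathbf e_3 + i\sqrt{\tau^2+\lambda}\,\hat\eta$ for $\hat\eta \in \mathbb S^1 \cap \mathbf e_3^\perp$, so that $\zeta \cdot \zeta = -\lambda$ and $|v_\tau(x)| = e^{-\tau x_3}$. Since $\operatorname{supp}\Phi$ lies in a half-space $\{x_3 \geq \delta\}$ separated from the cone vertex (this is precisely the hypothesis that the vertex lies outside the half-space where $\Phi$ is supported), one has $\bigl|\int \Phi\, v_\tau\bigr| \leq e^{-\tau\delta}\|\Phi\|_{L^1}$, and the orthogonality collapses to
\begin{equation*}
  I(\tau) := \int_C \varphi(x)\, e^{\zeta(\tau) \cdot x}\, dx = O(e^{-\tau\delta}), \quad \tau \to \infty.
\end{equation*}

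The core of the argument is the asymptotic analysis of $I(\tau)$. Parameterise $C\setminus\{0\}$ as $x = s\omega$ with $\omega \in \Omega := C \cap \mathbb S^2$ and $s > 0$, and rescale $r = \tau s$ to get
\begin{equation*}
  I(\tau) = \tau^{-3}\int_\Omega\!\int_0^\infty \varphi(r\omega/\tau)\, r^2\, e^{r\,\hat\zeta_\tau \cdot \omega}\, dr\, d\sigma(\omega),
\end{equation*}
with $\hat\zeta_\tau := \zeta(\tau)/\tau \to \hat\zeta := -\mathbf e_3 + i\hat\eta$. The integrand is dominated by $r^2 e^{-cr}\|\varphi\|_\infty$ uniformly in $\omega$ and $\tau$, and H\"older continuity $|\varphi(r\omega/\tau) - \varphi(0)| \leq M(r/\tau)^\alpha$ together with dominated convergence yields
\begin{equation*}
  I(\tau) = \tau^{-3}\varphi(0)\,\mathcal J(\hat\eta) + O(\tau^{-3-\alpha}), \qquad \mathcal J(\hat\eta) := 2\int_\Omega \frac{d\sigma(\omega)}{(-\hat\zeta\cdot\omega)^3}.
\end{equation*}
Expanding $(-\hat\zeta \cdot \omega)^{-3}$ in spherical harmonics on $\mathbb S^2$ via the Gegenbauer generating function, and using the null condition $\hat\zeta \cdot \hat\zeta = 0$, identifies $\mathcal J(\hat\eta)$, up to a non-zero explicit constant, with the degree-$2$ spherical-harmonic integral over $\Omega$ appearing in Definition~\ref{admissible-source-corner}. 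Admissibility of $C$ supplies $\hat\eta$ for which this integral is non-zero, which together with $\varphi(0) \neq 0$ contradicts the exponential decay $I(\tau) = O(e^{-\tau\delta})$.

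The main subtlety is that the cross section $K$ (and hence $\Omega$) may be very irregular, potentially with fractal boundary. However, the CGO and its rescaling are smooth in $x$ and all of the roughness sits in $\chi_\Omega$, so only Fubini and dominated convergence against the uniform integrable bound above are needed, and no regularity of $\partial K$ is used. This is exactly why Theorem~\ref{thm_source} admits cross sections far more irregular than those in Theorem~\ref{thm_starScat}, whose CGO construction inside the potential requires the Sobolev regularity $\chi_K \in H^\tau(\mathbb R^2)$ of Definition~\ref{def_regcone}. The only genuine input beyond standard scattering machinery is the spherical-harmonic identification of $\mathcal J(\hat\eta)$, which is classical and already reduces, by degree considerations, to the degree-$2$ quantity singled out in Definition~\ref{admissible-source-corner}.
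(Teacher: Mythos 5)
Your core asymptotic computation---the CGO family $e^{\zeta\cdot x}$ with $\Re\zeta$ along the cone axis, the rescaling $r=\tau s$, freezing $\varphi$ at the vertex via H\"older continuity, and relating $\int_{C\cap\mathbb S^2}(-\hat\zeta\cdot\omega)^{-3}\,d\sigma$ to the degree-two integrals of Definition~\ref{admissible-source-corner}---is essentially the paper's argument (the paper does the last step by integrating in $\psi$ against $e^{im\psi}$ and invoking Laplace's second representation, Theorem~\ref{thm_Laplace_repr}). The genuine gap is in how you reach the orthogonality relation. You invoke classical Rellich to conclude that $u$ vanishes outside a large ball, but $f=\varphi\chi_C+\Phi$ is \emph{not} compactly supported: $\Phi$ only lies in $e^{-\gamma\left|\cdot\right|}L^2$ and is supported in a half-space. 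This is precisely why the paper uses the Rellich-type theorem for unbounded domains (Theorem~\ref{rellich-type-theorem}), which yields only $u\equiv0$ on a half-space, followed by unique continuation and the connectedness of the exterior of $C$ (part of admissibility, which your argument never uses) to get $u\equiv0$ on $B(0,2r)\setminus C$ near the vertex. Without vanishing of $u$ near $\partial B_R$, your global Green identity is unavailable, so the claim $\int_{\R^3}f\,v\,dx=0$ for every entire Helmholtz solution $v$ (in particular for exponentially growing CGOs) is unjustified.

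A second, independent failure is the estimate $\bigl|\int\Phi\,v_\tau\bigr|\leq e^{-\tau\delta}\|\Phi\|_{L^1}$: it presumes $\supp\Phi\subset\{x_3\geq\delta\}$, i.e.\ that the half-space carrying $\Phi$ is orthogonal to the cone axis. The hypothesis only says the vertex lies outside that half-space; it may be tilted, and then $|v_\tau|=e^{-\tau x_3}$ grows on part of $\supp\Phi$ at rate proportional to $\tau$, which for $\tau>\gamma$ overwhelms the fixed-rate decay of $\Phi$, so the integral need not even converge; for the same reason $\supp f\subset\{x_3\geq0\}$, which your unique-continuation and Green-identity steps rely on, is not guaranteed. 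The paper's remedy for all of this is a \emph{local} cutoff argument: pair the CGO with $\chi u$, $\chi\in C^\infty_{\mathrm c}(B(0,2r))$, and use $u\equiv0$ on $B(0,2r)\setminus C$ so that the cutoff errors are supported in $C\cap\{r\leq|x|\leq2r\}$, where the CGO decays like $e^{-\delta\tau}$; neither $\Phi$ nor the behaviour of $u$ at infinity ever enters. (A minor further point: a single value $\mathcal J(\hat\eta)$ is not a fixed multiple of one degree-two integral; you should either average in $\psi$ as the paper does, or observe that $\psi\mapsto\mathcal J$ is a degree-two trigonometric polynomial whose Fourier coefficients are proportional to $\int_{C\cap\mathbb S^2}Y_2^m\,dS$, so admissibility produces some $\hat\eta$ with $\mathcal J(\hat\eta)\neq0$; this part of your sketch is repairable.)
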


\noindent
The first results on sources that always radiate were obtained in
\cite{blaasten18_nonrad_sourc_trans_eigen_vanis}, where it was shown
that sources having a convex or non-convex corner or edge on their
boundary always radiate. The main novelty of Theorem~\ref{thm_source}
is that it shows that a source determined by a cone with a singularity
at the vertex, will always radiate essentially regardless of the
geometry of the cross section of the conical source. This follows
because of propositions \ref{prop_small_angles} and
\ref{prop_hollow_source}.

\bigskip
\noindent
The paper is structured as follows. In Section~\ref{sec_prelim} we
review some preliminary results and notation that we use in the other
sections. In particular, we set the notation for spherical
coordinates. We then study the source problem in
Section~\ref{sec_source}. After this, we construct complex geometrical
optics solutions for potentials constructed from irregular cones in
Section~\ref{sec_cgo}. In Section~\ref{sec_admissible}, we define the
concept of an admissible medium cone and show that potentials related
to these always scatter. Section~\ref{sec_circular} proves that
circular cones are admissible medium cones. After this, in
Section~\ref{sec_density}, we prove
Theorem~\ref{thm_starScat}. Finally, in
Section~\ref{sec_assocLegendre}, we derive various formulas for
associated Legendre polynomials that were used in earlier sections.

\section{Preliminaries} \label{sec_prelim}

\noindent
We will review various results that are needed in the sequel. We start
by specifying some function spaces after which we give a short review
of basic scattering theory based on
\cite{agmon76_asymp_proper_solut_differ_equat,hoermander05_analy_linear_partial_differ_operat_ii}.
It is the same setup as in
\cite{paeivaerinta10_inver_scatt_magnet_schroe_operat,paeivaerinta17_stric_convex_corner_scatt}
for static scattering theory. The reader can find further details in
these references.

Following
\cite{agmon76_asymp_proper_solut_differ_equat,hoermander05_analy_linear_partial_differ_operat_ii,paeivaerinta10_inver_scatt_magnet_schroe_operat,paeivaerinta17_stric_convex_corner_scatt}
we use the spaces $B(\R^n)$ and $B^*(\R^n)$.  The space $B(\R^n)$
consists of those $u \in L^2(\R^n)$ for which the norm
\begin{equation*}
  \|u\|_{B(\R^n)} = \sum_{j=1}^{\infty} ( 2^{j-1} \int_{X_j} |u|^2
  \,dx )^{1/2}
\end{equation*}
is finite. See Section~14.1 in
\cite{hoermander05_analy_linear_partial_differ_operat_ii}. Here
$X_1 = \{ |x| < 1 \}$ and $X_j = \{ 2^{j-2} < |x| < 2^{j-1} \}$ for $j
\geq 2$. This is a Banach space whose dual $B^*(\R^n)$ consists of
all $u \in L^2_{\text{loc}}(\R^n)$ such that
\begin{equation*}
  \|u\|_{B^*(\R^n)} = \sup_{R > 1} \left[ \frac{1}{R} \int_{|x| < R}
    |u|^2 \,dx \right]^{1/2} < \infty.
\end{equation*}
The set $C^{\infty}_c(\R^n)$ of compactly supported smooth functions
is dense in $B(\R^n)$ but not in $B^*(\R^n)$. Their closure in
$B^*(\R^n)$ is denoted by $\mathring{B}^*(\R^n)$, and $u \in
B^*(\R^n)$ belongs to $\mathring{B}^*(\R^n)$ if and only if
\begin{equation*}
  \lim_{R \to \infty} \frac{1}{R} \int_{|x| < R} |u|^2 \,dx = 0.
\end{equation*}
We will also use $B^*_2$ and $\mathring{B}_2^*$, which take into
account the derivatives up to second order of the function. These are
defined via the norm
\begin{equation*}
	\|u\|_{B^*_2(\R^n)} = \sum_{|\alpha| \leq 2} \|D^{\alpha} u\|_{B^*(\R^n)}
	= \|u\|_{\mathring{B}^*_2(\R^n)}. %= \sum_{|\alpha| \leq 2} \|D^{\alpha} u\|_{B^*(\R^n)}
\end{equation*}

Next, we give a short review of the relevant parts of scattering
theory. In static or time harmonic scattering theory of energy
$\lambda>0$ one considers the equation
\begin{equation} \label{eq_Schrodinger}
  (-\Delta + V - \lambda) u = 0
\end{equation}
in $\R^n$ where $u$ is a total wave function that is the superposition
of an unperturbed incident wave function $u_i$ and a wave $u_s$
scattered by the potential function. That is, $u = u_i + u_s$ where
\begin{equation} \label{eq_scat_parts}
  (-\Delta - \lambda ) u_i = 0, \qquad 
  (-\Delta + V -\lambda) u_s = V u_i
\end{equation}
in $\R^n$. One further requires that the scattered wave $u_s$
satisfies the Sommerfeld radiation condition
\begin{equation}\label{eq_SRC}
  \lim_{r \to \infty} r^{\frac{n-1}{2}} \big( \p_r u_s - iku_s \big) = 0
\end{equation}
uniformly over $\hat{x} = x/r$ where $r=|x|$.

The potential $V$ is a \emph{short range} potential, by which we mean
that $V \in L^\infty(\R^n)$ and that there are constant $C,\epsilon >
0$, such that
\begin{equation} \label{eq_V_short_range}
  |V(x)| \leq C (1+|x|)^{-1-\epsilon}
\end{equation}
almost everywhere in $\R^n$. Note that this allows for potentials
whose support is unbounded.

Will will furthermore assume that the incident wave is given by a
Herglotz function, i.e.
\begin{equation} \label{eq_Herglotz_ui}
  u_i(x) = \int_{\mathbb S^{n-1}} e^{i\sqrt{\lambda}\theta \cdot x} g(\theta)
  \,d S, \qquad g \in L^2(\mathbb S^{n-1}).
\end{equation}
It follows that $u_i \in B^*_2(\R^n)$. See Proposition~2.1 in \cite{paeivaerinta10_inver_scatt_magnet_schroe_operat}. They also
show that the incident particle $u_i$ has the asymptotics
\begin{equation} \label{eq_ui_asymp}
  u_i(x) = \frac{e^{i\sqrt{\lambda}|x|}}{|x|^\frac{n-1}{2}} g(\theta)
  + \frac{e^{-i\sqrt{\lambda}|x|}}{|x|^\frac{n-1}{2}} g(-\theta) +
  O\big(|x|^{-\frac{n+1}{2}} \big)
\end{equation}
and moreover that the scattered wave has the asymptotics
\begin{equation} \label{eq_us_asymp}
  u_s(x) = \frac{e^{i\sqrt{\lambda}|x|}}{|x|^\frac{n-1}{2}}
  \alpha_s(\theta) + O\big(|x|^{-\frac{n+1}{2}} \big).
\end{equation}
The function $\alpha_s$ is called the \emph{scattering amplitude} or
the \emph{far field pattern}. The \emph{relative scattering} operator
is the map
\begin{equation} \label{eq_rel_sat_op}
  S_\lambda\colon g \mapsto \alpha_s, \qquad S_\lambda\colon L^2(\mathbb S^{n-1}) \to
  L^2(\mathbb S^{n-1}).
\end{equation}
For more technical details see
\cite{colton19_inver_acous_elect_scatt_theor} or more generally
sections 7 and 8 in \cite{agmon76_asymp_proper_solut_differ_equat}.
An $\alpha_s \equiv 0$ implies that $u_s$ decays quickly at infinity.
We see thus define that $\lambda$ is a non-scattering energy if the
kernel of $S_\lambda$ is non-trivial. More precisely
\begin{definition} \label{def_non_scat_energy}
We call $\lambda > 0$ a \emph{non-scattering energy} for a short range
potential $V$ if there is a $g \in L^2(\mathbb S^{n-1})$ such that $g\neq0$
and $S_\lambda g = 0$.
\end{definition}

\noindent
The following Lemma provides another characterization of a
non-scattering-energy.
\begin{lemma} \label{lem_non_scat_alt}
  Let $V$ be a short-range potential. Then $\lambda > 0$ is a
  \emph{non-scattering energy} for $V$ if there exist functions
  $v,w\in B^\ast_2(\mathbb R^n)$ so that $w\not\equiv0$ and
\begin{equation} \label{eq_unbounded_ITE}
  \begin{cases}
    \hfill(-\Delta+V-\lambda)v=0,\\
    \hfill (-\Delta-\lambda)w=0,\\
    \hfill v-w \in \mathring B^\ast_2(\mathbb R^n)
  \end{cases}
\end{equation}
in $\mathbb R^n$.
\end{lemma}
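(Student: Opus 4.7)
\medskip
\noindent
The plan is to extract from $v$ and $w$ a nonzero Herglotz density $g\in L^2(\mathbb S^{n-1})$ whose incident wave scatters to zero off of $V$. Two classical Agmon--H\"ormander facts underlie the argument. First, every $B^*_2(\mathbb R^n)$ solution of the free Helmholtz equation $(-\Delta-\lambda)w=0$ is itself a Herglotz wave \eqref{eq_Herglotz_ui} for a unique density $g\in L^2(\mathbb S^{n-1})$, with $g\not\equiv 0$ exactly when $w\not\equiv 0$ (see \cite{hoermander05_analy_linear_partial_differ_operat_ii,paeivaerinta10_inver_scatt_magnet_schroe_operat}). Second, the limiting absorption principle for short-range perturbations provides a unique Sommerfeld-outgoing solution $u_s\in B^*_2$ of $(-\Delta+V-\lambda)u_s=-Vu_i$, and such a $u_s$ belongs to $\mathring B^*_2$ if and only if its far-field pattern vanishes.

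\medskip
\noindent
Given $v,w$ as in \eqref{eq_unbounded_ITE}, I would set $u_i:=w$ and $u_s:=v-w$. The first fact identifies $u_i$ as a Herglotz wave \eqref{eq_Herglotz_ui} for some $g\not\equiv 0$, while the difference $u_s$ lies in $\mathring B^*_2$ by hypothesis. Combining the two equations in \eqref{eq_unbounded_ITE} gives
\begin{equation*}
  (-\Delta+V-\lambda)u_s = (-\Delta+V-\lambda)v - (-\Delta-\lambda)w - Vw = -Vu_i,
\end{equation*}
so $u_s$ satisfies the scattered-wave equation \eqref{eq_scat_parts} for the incident field $u_i$. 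The second fact above then identifies $u_s$ with the physical scattered wave attached to the pair $(V,u_i)$ and forces its far-field $\alpha_s=S_\lambda g$ to vanish. Since $g\not\equiv 0$, $\lambda$ is a non-scattering energy in the sense of Definition~\ref{def_non_scat_energy}.

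\medskip
\noindent
The main obstacle is really just the precise invocation of the two Agmon--H\"ormander facts, since the paper cites rather than proves them. The more delicate of the two is the equivalence $\alpha_s=0\Leftrightarrow u_s\in\mathring B^*_2$, which rests on Rellich's lemma together with unique continuation through the support of $V$; it ensures that $u_s\in\mathring B^*_2$ is not merely \emph{some} solution of the inhomogeneous equation, but in fact \emph{the} outgoing scattered wave, and hence that its vanishing $B^*/\mathring B^*$-``far-field'' really coincides with $S_\lambda g$. Once that is in place the rest is bookkeeping of equations. The converse direction, that a non-scattering energy produces such $v,w$, is obtained symmetrically by letting $w=u_i$ be the Herglotz wave witnessing $S_\lambda g=0$ and $v=u_i+u_s$ the total field, which again satisfies \eqref{eq_unbounded_ITE}.
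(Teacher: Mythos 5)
Your proposal takes essentially the same route as the paper's proof: set $u_i=w$ and $u_s=v-w$, invoke the Agmon--H\"ormander fact (Theorem~14.3.3 in \cite{hoermander05_analy_linear_partial_differ_operat_ii}) to write $w$ as a Herglotz wave with nonzero density $g$, and use $u_s\in\mathring B^\ast_2(\mathbb R^n)$ together with the asymptotics \eqref{eq_us_asymp} to force $\alpha_s=S_\lambda g=0$. The only difference is that you explicitly flag the identification of $v-w$ with the physical outgoing scattered wave (via Rellich-type uniqueness and unique continuation), a point the paper's proof passes over silently, and your sign $-Vu_i$ is the correct one, the sign in \eqref{eq_scat_parts} being an immaterial slip of convention.
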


\begin{proof} 
Suppose that $v$ and $w$ are as in the claim. Now define 
\[
u_i := w,\qquad u_s := v-w.
\]
Clearly $u_i$ and $u_s$ solve equations \eqref{eq_scat_parts}. The
incident wave $u_i = w$ can be written as a Herglotz wave function by
Theorem~14.3.3 in
\cite{hoermander05_analy_linear_partial_differ_operat_ii}. In other
words there exists an $g\in L^2(\mathbb S^{n-1})$ so that
\[
u_i(x) = \int_{\mathbb S^{n-1}} e^{i\sqrt{\lambda}\theta \cdot x} g(\theta)
\,d S, \qquad g \in L^2(\mathbb S^{n-1}).
\]
Furthermore since $u_s = v-w \in \mathring B^\ast_2(\mathbb R^n)$, we
see that
\[
\lim_{R \to \infty} \frac{1}{R} \int_{|x|<R} |u_s|^2  \, dx = 0
\]
which would be impossible by \eqref{eq_us_asymp} unless $\alpha_s
\equiv 0$. Hence $S_\lambda g = 0$.
\end{proof}

\medskip
We will also make use of the following Rellich-type theorem which is
Theorem~4 in \cite{vesalainen14_rellic_type_theor_unboun_domain}.
\begin{theorem}\label{rellich-type-theorem}
  Let $n\in\left\{2,3,\ldots\right\}$, $\lambda > 0$, $\gamma > 0$,
  and let $f\in e^{-\gamma\left|\cdot\right|}\,L^2(\mathbb R^n)$ be
  such that $f|_H\equiv0$ for some half-space $H\subset\mathbb
  R^n$. If $u\in\mathring B^\ast_2$ solves the equation
  \[
  \left(-\Delta-\lambda\right)u=f
  \]
  in $\mathbb R^n$ then $u|_H\equiv0$.
\end{theorem}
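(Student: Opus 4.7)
The plan is to combine Agmon's limiting absorption principle with a Paley-Wiener analytic continuation. After rotating coordinates, assume $H=\{x_n>0\}$ and $\supp f\subset\{x_n\leq 0\}$. The outgoing solution $u^+:=R_+(\lambda) f\in B^*_2$ has the standard far-field expansion $u^+(x)\sim c_n\,|x|^{-(n-1)/2}e^{i\sqrt{\lambda}|x|}\hat f(\sqrt{\lambda}\theta)$, and the difference $u-u^+\in B^*_2$ solves the homogeneous Helmholtz equation, so by Theorem~14.3.3 of \cite{hoermander05_analy_linear_partial_differ_operat_ii} it is a Herglotz wave $h$ with density $g\in L^2(\mathbb S^{n-1})$. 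Matching the asymptotics \eqref{eq_ui_asymp}--\eqref{eq_us_asymp} of $u=u^++h$ against the $\mathring B^*_2$ condition kills the incoming tail (forcing $g\equiv 0$ and hence $u=u^+$) and simultaneously the outgoing amplitude (forcing $\hat f(\sqrt{\lambda}\theta)=0$ for every $\theta\in\mathbb S^{n-1}$).

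Next, the exponential decay of $f$ together with its half-space support give, by Paley-Wiener, that $\hat f(\xi',\zeta_n)$ extends holomorphically in $\zeta_n$ to the half-plane $\Im\zeta_n>-\gamma$ with polynomial-times-exponential growth bounds uniform in $\xi'$. The real vanishing $\hat f|_{|\xi|^2=\lambda}\equiv 0$ then propagates by analytic continuation along the complex characteristic variety $\{(\xi',\zeta_n):|\xi'|^2+\zeta_n^2=\lambda\}$, and in particular $\hat f(\xi',i\sqrt{|\xi'|^2-\lambda})=0$ for every $\xi'$ with $|\xi'|^2>\lambda$. With this in hand I would invert $\hat u(\xi)=\hat f(\xi)/(|\xi|^2-\lambda-i0)$ by integrating in $\xi_n$ first: for $x_n>0$ the factor $e^{ix_n\xi_n}$ allows closing the $\xi_n$-contour in the upper half-plane, and the only enclosed singularity is $\xi_n=\sqrt{\lambda-|\xi'|^2}+i0$ (propagating case) or $\xi_n=i\sqrt{|\xi'|^2-\lambda}$ (evanescent case). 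In both situations the residue is proportional to $\hat f$ evaluated on the characteristic variety and therefore vanishes, so $u(x',x_n)=0$ for every $x_n>0$, which is exactly $u|_H\equiv 0$.

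I expect the main obstacle to be justifying the contour deformation uniformly in $\xi'$: one needs quantitative Paley-Wiener bounds on $\hat f$ so that the semicircular arc contributions genuinely decay as the contour radius grows, and the $+i0$ prescription at the propagating poles on the real axis must be handled as a distributional limit (a small $\epsilon$-regularisation followed by $\epsilon\downarrow 0$, which is compatible with the residues vanishing in the limit). A secondary technical point is verifying in the first step that the Ces\`aro-averaged asymptotics really decouple the incoming and outgoing amplitudes; this is classical Rellich-type bookkeeping but relies on the oscillatory cross-terms between $e^{\pm i\sqrt{\lambda}|x|}$ averaging to zero over large balls, so the two angular densities must be isolated component by component before concluding that $g$ and $\alpha$ both vanish.
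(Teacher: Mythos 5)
The paper itself contains no proof of this statement: it is imported verbatim as Theorem~4 of the cited work of Vesalainen on Rellich-type theorems for unbounded domains, so there is no in-paper argument to compare yours against. Judged on its own merits, your limiting-absorption/Fourier-analytic outline is essentially sound: since $e^{-\gamma\left|\cdot\right|}L^2(\mathbb R^n)\subset B(\mathbb R^n)$, the outgoing solution $u^{+}=R_+(\lambda)f$ exists in $B^*_2$, the difference $u-u^{+}$ is a Herglotz wave by H\"ormander's Theorem~14.3.3, and the $\mathring B^*$ averaging does decouple the $e^{\pm i\sqrt{\lambda}|x|}$ amplitudes, forcing $g\equiv0$, $u=u^{+}$ and $\widehat f=0$ on $\{|\xi|^2=\lambda\}$. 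Moreover the contour step you worry about is actually harmless: the resolvent symbol contributes $O(|\zeta_n|^{-2})$ on the large arcs, so mere boundedness of $\widehat f(\xi',\cdot)$ in $\{\Im\zeta_n\geq-\gamma/2\}$ (uniform in $\xi'$) suffices to close the contour, and the $+i0$ prescription can be handled with $\epsilon>0$ fixed, passing to the limit by dominated convergence using $|\zeta_\epsilon|^{-1}\leq\bigl||\xi'|^2-\lambda\bigr|^{-1/2}$ (locally integrable in $\xi'$) and $\Im\zeta_\epsilon\geq\sqrt{(|\xi'|^2-\lambda)_+}$ (exponential decay in $\xi'$ for fixed $x_n>0$), plus a routine mollification to justify the iterated integral when $n\geq4$.

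The one step you must repair is the analytic continuation. As written, your Paley--Wiener input is holomorphy of $\widehat f(\xi',\zeta_n)$ in $\zeta_n$ alone, on $\{\Im\zeta_n>-\gamma\}$, uniformly in real $\xi'$. That cannot by itself propagate the vanishing from the real sphere to the evanescent points $(\xi',i\sqrt{|\xi'|^2-\lambda})$: for a fixed $\xi'$ with $|\xi'|^2>\lambda$ the slice $\{\xi'\}\times\C$ never meets $\{|\xi|^2=\lambda\}$, so on that slice there is nothing to continue from. What rescues the argument is that $f\in e^{-\gamma\left|\cdot\right|}L^2$ decays exponentially in \emph{all} variables, so $\widehat f$ is jointly holomorphic on a tube $\{|\Im\zeta|<c\gamma\}$, enlarged in the $\zeta_n$-direction to $\{\Im\zeta_n>-c\gamma\}$ by the half-space support; one then applies the identity theorem on the characteristic variety $\{\zeta\cdot\zeta=\lambda\}$, most simply along the holomorphic curves $\vartheta\mapsto(\sqrt{\lambda}\sin\vartheta\,\omega',\sqrt{\lambda}\cos\vartheta)$, $\omega'\in\mathbb S^{n-2}$, which lie on the variety, vanish for real $\vartheta$ (real sphere), and reach the points $(\sqrt{\lambda}\cosh s\,\omega',\,i\sqrt{\lambda}\sinh s)$ along $\Re\vartheta=\pi/2$ while staying inside the domain of holomorphy. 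With the continuation restated in this joint-analyticity form (or via the totally real submanifold argument on the variety), and the technical points above discharged as indicated, your proof goes through; it is in the same Fourier-analytic spirit as the cited source rather than an alternative to anything in this paper.
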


\medskip
We use the spherical coordinate system given below. Denote by $\mathbb
S^{n-1}$ the unit sphere in $\R^n$. Note that we will be a bit sloppy
with notation, but this will not cause confusion. For example, given a
function $f\colon\mathbb S^2 \to \C$ we will denote by $f(x)$ its value at
a point $x\in\mathbb S^2$. But we might also denote this value by
$f(\vartheta,\varphi)$ and implicitly assume that $x=
(\sin\vartheta\cos\varphi,
\sin\vartheta\sin\varphi,\cos\vartheta)$. For $x\in\R^3$ we use the
spherical coordintes
\begin{equation} \label{eq_spherical_coords}
  \begin{cases}
    x_1=r\,\sin\vartheta\,\cos\varphi,\\
    x_2=r\,\sin\vartheta\,\sin\varphi,\\
    x_3=r\cos\vartheta,
  \end{cases}
\end{equation}
where $r\geq0$, $0\leq\varphi<2\pi$, $0\leq\vartheta\leq\pi$.

Much of our analysis will be based on the use of spherical
harmonics. We denote the space of spherical harmonics of degree $N$ by
$SH^N$. Moreover we will use the basis functions $Y^m_N$, given by
\begin{equation} \label{YNmDef}
  Y_N^m(\varphi,\vartheta) =
  (-1)^m \sqrt{ \frac{2N+1}{4\pi} \cdot \frac{\left(N-m\right)!}{\left(N+m\right)!} }
  \,e^{im\varphi}\,P_N^m(\cos\vartheta)
\end{equation}
for $N\in\N$, $m\in\{-N,-N+1,\ldots,N-1,N\}$ and where $P_N^m$ is the
usual \emph{associated Legendre polynomial}, which are defined by the Legendre polynomials $P_N$ in \eqref{eq_Pmn}. For more
information on these, see
\cite{gallier13_notes_spher_harmon_linear_repres_lie_group,morimoto98_analy,stein71_introd_fourier_euclid}.
We shall have an opportunity to use the following classical result
from the theory of spherical harmonics, known as Laplace's second
integral representation for $P_N^m$. See Section~63 of Chapter~III in
\cite{hobson55}.
\begin{theorem}	\label{thm_Laplace_repr}
  Let $N\in\N$ and let $m$ be an integer such that
  $|m|\leq N$. Then, for $0\leq\vartheta<\pi/2$, we
  have
  \begin{equation}\label{eq_spherical_integral}
    \frac1{2\pi}\int_{0}^{2\pi}\frac{e^{im\psi}\,
      d\psi}{\left(\cos\vartheta+i\sin\vartheta\cos\psi\right)^{N+1}}
    =\frac{\left(N-m\right)!\left(-1\right)^m}{N!}\,P_N^m(\cos\vartheta).
  \end{equation}
\end{theorem}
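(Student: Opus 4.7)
The plan is to convert the integral over $[0,2\pi]$ into a contour integral on the unit circle via $\zeta = e^{i\psi}$ and evaluate it by the residue theorem. Under this substitution $d\psi = d\zeta/(i\zeta)$ and $\cos\psi = (\zeta + \zeta^{-1})/2$, so a direct computation gives
\[
\cos\vartheta + i\sin\vartheta\cos\psi = \frac{i\sin\vartheta}{2\zeta}(\zeta - \zeta_1)(\zeta - \zeta_2),
\]
with $\zeta_1 = i(\cos\vartheta - 1)/\sin\vartheta$ and $\zeta_2 = i(\cos\vartheta + 1)/\sin\vartheta$, satisfying $\zeta_1\zeta_2 = 1$, $\zeta_1 + \zeta_2 = 2i\cot\vartheta$, and $\zeta_2 - \zeta_1 = 2i/\sin\vartheta$. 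For $0 < \vartheta < \pi/2$ the inequality $1-\cos\vartheta < \sin\vartheta < 1+\cos\vartheta$ forces $|\zeta_1| < 1 < |\zeta_2|$, so exactly one of the two roots lies inside the unit circle. The boundary case $\vartheta = 0$ is handled separately: the integrand reduces to $e^{im\psi}$, which integrates to $\delta_{m,0}$, matching $P_N^m(1) = \delta_{m,0}$.

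Substituting into the left-hand side of \eqref{eq_spherical_integral} gives
\[
\frac{1}{2\pi}\int_0^{2\pi}\frac{e^{im\psi}\,d\psi}{(\cos\vartheta + i\sin\vartheta\cos\psi)^{N+1}}
= \frac{(2/(i\sin\vartheta))^{N+1}}{2\pi i}\oint_{|\zeta|=1}\frac{\zeta^{m+N}\,d\zeta}{(\zeta - \zeta_1)^{N+1}(\zeta - \zeta_2)^{N+1}}.
\]
For $0 \leq m \leq N$ the numerator $\zeta^{m+N}$ is holomorphic on the closed unit disk, so only the order-$(N+1)$ pole at $\zeta_1$ contributes to the residue. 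The remaining range $-N \leq m < 0$ reduces to this case either by conjugating $\psi \mapsto -\psi$ and invoking the standard symmetry $P_N^{-m} = (-1)^m \frac{(N-m)!}{(N+m)!} P_N^m$, or by additionally accounting for the residue at the origin.

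It remains to identify
\[
\frac{1}{N!}\left.\frac{d^N}{d\zeta^N}\!\left[\frac{\zeta^{m+N}}{(\zeta - \zeta_2)^{N+1}}\right]\right|_{\zeta = \zeta_1}
\]
with a scalar multiple of $P_N^m(\cos\vartheta)$. I would change variable to $s = (\zeta - \zeta_1)/(\zeta_2 - \zeta_1)$ and use $\zeta_1\zeta_2 = 1$ to rewrite $\zeta^2 - 1 = \zeta^2 - \zeta_1\zeta_2$; this factors as $(\zeta-\zeta_1)(\zeta-\zeta_2)\cdot(\text{unit})$ up to explicit factors of $\sin\vartheta$. After this rewriting the $N$-th derivative can be recast as $\frac{d^{N+m}}{dx^{N+m}}(x^2-1)^N$ evaluated at $x = \cos\vartheta$, up to a power of $\sin\vartheta$ and a sign; comparing with the Rodrigues-type representation of $P_N^m$ then yields the constant $(N-m)!(-1)^m/N!$.

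The main obstacle is purely bookkeeping: tracking signs, powers of $i$, and powers of $\sin\vartheta$ through the substitution and the derivative. Since the combinatorial content is classical (the argument goes back to Hobson, Section~63 of Chapter~III of \cite{hobson55}), I would pin down the overall normalization by cross-checking two special cases before committing to the general identification: first $m=0$, which reduces to the classical second Laplace integral for $P_N$, and then a single low-order case such as $N=m=1$ to fix the $(-1)^m$ factor unambiguously.
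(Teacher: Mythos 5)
Your contour-integral set-up is sound, and it is worth noting that it is not ``the paper's proof'' at all: the paper gives no proof of Theorem~\ref{thm_Laplace_repr}, it simply cites Section~63 of Chapter~III of \cite{hobson55}, so yours is a self-contained alternative. The substitution $\zeta=e^{i\psi}$, the factorization with $\zeta_1=i(\cos\vartheta-1)/\sin\vartheta$, $\zeta_2=i(\cos\vartheta+1)/\sin\vartheta$, the location $|\zeta_1|<1<|\zeta_2|$, and the reduction (for $0\leq m\leq N$) to the order-$(N+1)$ residue at $\zeta_1$ are all correct; this is essentially the classical Schl\"afli-contour derivation run backwards. But the proof is not finished where it matters. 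The identity you lean on to turn the residue into a Rodrigues-type derivative is false as stated: $\zeta^2-\zeta_1\zeta_2=\zeta^2-1$ does \emph{not} factor as $(\zeta-\zeta_1)(\zeta-\zeta_2)$ times a unit, since $(\zeta-\zeta_1)(\zeta-\zeta_2)=\zeta^2-2i\cot\vartheta\,\zeta+1$. The correct bridge is the affine substitution $t=\cos\vartheta+i\sin\vartheta\,\zeta$, under which $(\zeta-\zeta_1)(\zeta-\zeta_2)=-(t^2-1)/\sin^2\vartheta$ and the unit circle becomes the circle $|t-\cos\vartheta|=\sin\vartheta$, enclosing $t=1$ but not $t=-1$; carrying that computation through is the actual content of the proof and is missing. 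Moreover, pinning down an $(N,m)$-dependent normalization by checking two special cases is not a proof of the general constant.

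More seriously, your own proposed sanity check will not confirm the printed constant. At $N=m=1$, either your residue formula or the elementary rewriting $\cos\psi=\bigl((a+ib\cos\psi)-a\bigr)/(ib)$ with $a=\cos\vartheta$, $b=\sin\vartheta$ gives
\[
\frac1{2\pi}\int_0^{2\pi}\frac{e^{i\psi}\,d\psi}{(\cos\vartheta+i\sin\vartheta\cos\psi)^2}=-i\sin\vartheta,
\]
whereas the right-hand side of \eqref{eq_spherical_integral} as printed is $-P_1^1(\cos\vartheta)=\sin\vartheta$ in the convention \eqref{eq_Pmn}. With that convention the integral actually equals $i^m\frac{(N-m)!}{N!}P_N^m(\cos\vartheta)$; the factor $(-1)^m$ in \eqref{eq_spherical_integral} is a convention slip in transcribing Hobson. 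One can also see structurally that the printed constant cannot hold for odd $m$: the left-hand side is invariant under $m\mapsto-m$ (only $\cos m\psi$ survives), while by the paper's own relation $P_N^{-m}=(-1)^m\frac{(N-m)!}{(N+m)!}P_N^m$ the printed right-hand side changes sign. This is harmless for the paper, which uses the identity only up to a nonzero multiplicative constant, but it means your plan to ``fix the $(-1)^m$ factor unambiguously'' from low-order cases cannot close a proof of the statement as printed; complete the residue-to-Rodrigues identification honestly and state the result with the constant $i^m(N-m)!/N!$ (or prove the identity only up to an explicit nonzero constant, which is all the applications in sections \ref{sec_source} and \ref{sec_admissible} require).
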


\begin{remark}
  In fact, slightly more can be said: if $m$ happens to be an integer
  with $\left|m\right|>N$, then the integral on the left-hand side
  vanishes for $0\leq\vartheta<\pi/2$.
\end{remark}

\medskip
Finally, we will be making use of the following lemma, which is
essentially Lemma~3.6 in \cite{blaasten14_corner_alway_scatt} or
Lemma~4.3 in \cite{paeivaerinta17_stric_convex_corner_scatt}.
\begin{lemma}\label{integral-lemma}
  Let $n\in\left\{2,3,\ldots\right\}$, $\beta\geq0$, $1\leq
  q\leq\infty$, $f\in L^q(\mathbb R^n)$, and let $R\colon\R^n\to\C$ be
  a measurable function such that
  \[
  R(x)=O(\left|x\right|^\beta)
  \]
  for $x\in\R^n$. Also, let $\rho\in\C^n \setminus \{0\}$, and let
  $C\subset\R^n$ be a closed cone with vertex at the origin such that
  \[
  \inf_{x\in C\setminus\left\{0\right\}}
  \frac{x\cdot\Re\rho}{\left|x\right|\cdot\left|\Re\rho\right|} > 0.
  \]
  Then
  \[
  \int_Ce^{-\rho\cdot x}\,R(x)\,f(x)\, dx \lesssim
  |\rho|^{n/q-\beta-n} \bigl\| e^{-\Re\rho/|\rho|\cdot x} |x|^\beta
  \chi_C \bigr\|_{L^{q'}(\R^n)} \|f\|_{L^q(\mathbb R^n)},
  \]
  where, as usual, $1\leq q'\leq\infty$ so that $1/q+1/q'=1$.
\end{lemma}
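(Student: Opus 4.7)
The plan is the standard three-step routine for bounds of this shape: apply H\"older's inequality to peel off $f$, absorb $R$ into $|x|^\beta$, and then extract the $|\rho|$-dependence by a single dilation that uses the fact that $C$ is a cone through the origin.

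First, since $|e^{-\rho\cdot x}|=e^{-(\Re\rho)\cdot x}$ and the hypothesis $R(x)=O(|x|^\beta)$ gives $|R(x)|\lesssim |x|^\beta$, I would apply H\"older's inequality with exponents $q$ and $q'$ to obtain
\[
\Bigl|\int_C e^{-\rho\cdot x}R(x)f(x)\,dx\Bigr|
\lesssim \bigl\|e^{-(\Re\rho)\cdot x}|x|^\beta\chi_C\bigr\|_{L^{q'}(\R^n)}\,\|f\|_{L^q(\R^n)}.
\]
The cone-angle hypothesis forces $x\cdot\Re\rho\geq c\,|x|\,|\Re\rho|$ on $C$ for some $c>0$, which is precisely what makes the displayed $L^{q'}$-norm finite (and, in particular, the original integral absolutely convergent). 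In the degenerate case where the $L^{q'}$-norm appearing in the target estimate is infinite, the inequality is trivially true, so nothing is lost.

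To reach the exact form in the statement I would substitute $y=|\rho|x$ inside the $L^{q'}$-norm. The cone $C$ is invariant under the dilation $x\mapsto x/|\rho|$, the Jacobian contributes $|\rho|^{-n}$, the factor $|x|^\beta$ becomes $|\rho|^{-\beta}|y|^\beta$, and the exponent $(\Re\rho)\cdot x$ becomes $(\Re\rho/|\rho|)\cdot y$. Collecting these and taking the $1/q'$-th power produces a prefactor $|\rho|^{-\beta-n/q'}$ in front of the rescaled norm; using the conjugate-exponent identity $-n/q'=n/q-n$ turns this into exactly the advertised $|\rho|^{n/q-\beta-n}$. No step presents a real obstacle; the only thing worth monitoring is that the implicit constants in $\lesssim$ depend solely on $n$, $\beta$, and the constant hidden in $R=O(|x|^\beta)$, and in particular are independent of $\rho$, $f$, and $C$. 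Granted this bookkeeping, the proof reduces to H\"older plus one dilation.
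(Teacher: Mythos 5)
Your proposal is correct and follows exactly the route the paper indicates: H\"older's inequality with exponents $q,q'$ to separate $f$ from $e^{-\Re\rho\cdot x}|x|^\beta\chi_C$, followed by the dilation $x\mapsto x/|\rho|$ (using that $C$ is a cone) and the identity $-n/q'=n/q-n$ to produce the prefactor $|\rho|^{n/q-\beta-n}$. The bookkeeping, including the degenerate cases and the independence of the implicit constant from $\rho$, $f$ and $C$, checks out.
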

\noindent
In particular, the value $q=\infty$ is acceptable, and then the
exponent $n/q-\beta-n$ reduces to $-\beta-n$ as one would expect. The
lemma is proven by the change of variables $x=y/|\rho|$ and an
application of H\"older's inequality.

\section{The Source Problem}\label{sec_source}

\noindent
In this section we prove Theorem~\ref{thm_source}. We begin with a few definitions.

\begin{definition}\label{admissible-source-corner}
We call a closed cone $C\subset\R^3$ with vertex at the origin an
\emph{admissible source cone} if it is contained in some closed
strictly convex circular cone, if its exterior is connected, and if
its spherical cross section $C\cap\mathbb S^2$ has the property that
\[
\int_{\mathbb S^2\cap C}\,Y_2^m \, dS \neq 0
\]
for some $m\in\left\{-2,-1,0,1,2\right\}$. Here $Y_2^m$ denotes a
spherical harmonic of degree two \eqref{YNmDef}.
\end{definition}

\begin{remark}
We point out that Definition~\ref{admissible-source-corner} is
invariant with respect to rotation. That is, if $C$ satisfies all the
conditions of the definition, then the rotated cone
\[
O[C]=\left\{Ox\,\middle|\,x\in C\right\}
\]
also satisfies the conditions for any $O\in\mathrm{SO}(3,\mathbb R)$.
\end{remark}

\noindent
We will show that smooth enough sources that have a jump on the
boundary near a vertex of an admissible cone produce a non-zero
far-field, see Theorem~\ref{source-theorem}. But before that, we will
give some examples of admissible source cones. In particular, cones
whose vertex angle is small are admissible no matter the shape of
their cross-section.

\begin{definition} \label{def_source_circ}
  Let $\gamma\in (0,\pi/2)$. Then we define $C_\gamma$ to be the
  closed strictly convex circular cone
  \[
  C_\gamma=\left\{x\in\mathbb R^3\middle|\left|x'\right|\leq
  x_3\tan\gamma\right\}
  \]
  where $x'=(x_1,x_2)$. We also define a ``magic angle'' $\vartheta_0$
  to be the unique angle $\vartheta_0\in(0,\pi/2)$ for which
  $\cos\vartheta_0=1/\sqrt3$.
\end{definition}

\begin{remark}
  In degrees, the magic angle $\vartheta_0$ is approximately
  $54.74^\circ$.
\end{remark}

\begin{proposition}
  Let $\gamma\in (0,\pi/2)$. Then $C_\gamma$ is an admissible source
  cone.
\end{proposition}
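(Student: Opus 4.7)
The plan is to verify the three defining conditions of an admissible source cone from Definition~\ref{admissible-source-corner} for $C_\gamma$, the only nontrivial one being the nonvanishing of the integral of a degree-two spherical harmonic. Conditions (i) and (ii) are immediate: $C_\gamma$ is itself a closed strictly convex circular cone containing itself, and the exterior of such a cone in $\mathbb R^3$ is clearly path-connected (one can travel around the cone in the $x_1x_2$-plane outside the cone's projection).

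For the integral condition, I would exploit the rotational symmetry of $C_\gamma$ about the $x_3$-axis. The set $\mathbb S^2\cap C_\gamma$ is described in the spherical coordinates \eqref{eq_spherical_coords} by $0\leq\vartheta\leq\gamma$ and $0\leq\varphi<2\pi$. Using formula \eqref{YNmDef} for $Y_2^m$, each spherical harmonic of degree two with $m\neq 0$ contains a factor $e^{im\varphi}$, so the $\varphi$-integral vanishes and
\[
\int_{\mathbb S^2\cap C_\gamma} Y_2^m\, dS = 0\quad\text{for } m\in\{-2,-1,1,2\}.
\]
Hence the only candidate is $m=0$, and I must show that the corresponding integral is nonzero.

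Since $P_2^0=P_2$ is the ordinary Legendre polynomial $P_2(t)=\tfrac12(3t^2-1)$, I compute
\[
\int_{\mathbb S^2\cap C_\gamma} Y_2^0\, dS
= \sqrt{\tfrac{5}{4\pi}}\cdot 2\pi\int_0^\gamma \tfrac12(3\cos^2\vartheta-1)\sin\vartheta\, d\vartheta.
\]
Substituting $u=\cos\vartheta$ converts this to $\sqrt{5\pi}\int_{\cos\gamma}^{1}(3u^2-1)\, du = \sqrt{5\pi}\,[u^3-u]_{\cos\gamma}^{1} = \sqrt{5\pi}\,\cos\gamma\sin^2\gamma$. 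Since $\gamma\in(0,\pi/2)$, both $\cos\gamma$ and $\sin\gamma$ are strictly positive, so the expression is nonzero, which completes the verification.

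There is really no obstacle here; the only conceptual point worth flagging is that although the ``magic angle'' $\vartheta_0$ is the zero of $P_2$ on $(0,\pi/2)$, it is not a zero of the antiderivative $u^3-u$ on that interval, so it does not obstruct the proposition. The magic angle will presumably play a role only in subsequent results (for example, for hollow or thin cones where $P_2(\cos\vartheta_0)=0$ enters the analysis), not for the full solid cone $C_\gamma$.
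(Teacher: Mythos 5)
Your argument is correct and is essentially the paper's own proof: conditions (i) and (ii) are immediate, and the integral condition is verified by showing $\int_{C_\gamma\cap\mathbb S^2}Y_2^0\,dS$ is a positive constant times $\cos\gamma\sin^2\gamma\neq0$ for $\gamma\in(0,\pi/2)$. (You drop a harmless factor of $\tfrac12$ when passing to the $u=\cos\vartheta$ integral, which does not affect the nonvanishing, and your extra observation that the $m\neq0$ integrals vanish by rotational symmetry is correct but not needed, since the definition only asks for some $m$ with a nonzero integral.)
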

\begin{proof}
  The cone $C_\gamma$ is contained in a closed strictly convex
  circular cone, namely $C_\gamma$ itself. It is also clear that the
  exterior $\mathbb R^3\setminus C_\gamma$ is connected. To prove the
  integral condition, we shall prove that
  \[
  \int_{C_\gamma\cap\mathbb S^2}Y_2^0(x)\, dx\neq0.
  \]
  After moving to spherical coordinates, writing $Y_2^0$ in terms of
  the Legendre polynomial $P_2$ as in \eqref{YNmDef}, and observing
  that the integrand is independent of the variable $\varphi$, we only
  have to show that
  \[
  \int_0^\gamma\left(3\cos^2\vartheta-1\right)\sin\vartheta\,
  d\vartheta\neq0.
  \]
  But we can compute that
  \[
  \int_0^\gamma\left(3\cos^2\vartheta-1\right)\sin\vartheta\,
  d\vartheta=\cos\gamma\sin^2\gamma,
  \]
  and clearly the product is non-zero.
\end{proof}

\begin{proposition}\label{prop_small_angles}
  If $C\subset\R^3$ is a closed strictly convex cone with vertex at
  the origin, if $C$ is not a set of measure zero, if $C$ has
  connected exterior, and if $C\subseteq C_{\vartheta_0}$, then $C$ is
  an admissible source cone.
\end{proposition}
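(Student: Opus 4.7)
The plan is to verify the three defining conditions of Definition~\ref{admissible-source-corner} in turn, with the first two being immediate and the third reducing to a sign computation using the magic angle. Conditions (i) and (ii) are given directly: $C$ sits inside $C_{\vartheta_0}$, which is a closed strictly convex circular cone, and the hypothesis explicitly provides connectedness of the exterior. So the only real work is to show that there exists $m\in\{-2,-1,0,1,2\}$ such that
\[
\int_{\mathbb S^2\cap C}Y_2^m\, dS\neq 0.
\]

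I would take $m=0$ and mimic the computation done above for $C_\gamma$. By \eqref{YNmDef} together with $P_2(t)=(3t^2-1)/2$, the integrand $Y_2^0(\vartheta,\varphi)$ is, up to a positive multiplicative constant independent of the point on the sphere, equal to $3\cos^2\vartheta-1$. The definition of the magic angle gives exactly $3\cos^2\vartheta_0-1=0$, and $3\cos^2\vartheta-1$ is a strictly decreasing function of $\vartheta$ on $[0,\pi/2]$, hence
\[
3\cos^2\vartheta-1\geq 0 \quad\text{on}\quad [0,\vartheta_0],
\]
with equality only at $\vartheta=\vartheta_0$. Since $C\subseteq C_{\vartheta_0}$, every point of $C\cap\mathbb S^2$ has polar angle $\vartheta\leq\vartheta_0$, so the integrand is pointwise non-negative on the domain of integration.

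It remains to show that the integrand is strictly positive on a set of positive surface measure. Because $C$ is a cone, its three-dimensional Lebesgue measure is positive if and only if its spherical cross-section $C\cap\mathbb S^2$ has positive two-dimensional Hausdorff measure on $\mathbb S^2$; by hypothesis $C$ is not a set of measure zero, so $C\cap\mathbb S^2$ has positive surface measure. The subset on which the integrand vanishes is contained in the single latitude $\{\vartheta=\vartheta_0\}$, which has surface measure zero, so the integrand is strictly positive on a subset of $C\cap\mathbb S^2$ of positive measure. Therefore $\int_{\mathbb S^2\cap C}Y_2^0\, dS>0$, which verifies the remaining condition.

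The only point requiring a moment of care is the last step, namely translating ``$C$ is not a set of measure zero'' into positivity of the surface measure of $C\cap\mathbb S^2$; this follows at once from a polar-coordinate change of variables, using that $C$ is a cone. Neither strict convexity of $C$ nor any fine structure of its boundary plays a role in the argument beyond ensuring the hypotheses of the definition; the decisive geometric input is simply the inclusion $C\subseteq C_{\vartheta_0}$, which forces $Y_2^0$ to have a definite sign on the cross-section.
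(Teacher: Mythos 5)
Your proof is correct and takes essentially the same route as the paper: $Y_2^0$ is nonnegative on $\{\vartheta\le\vartheta_0\}$ and vanishes only on the measure-zero latitude $\vartheta=\vartheta_0$, so the integral over the positive-measure cross-section $C\cap\mathbb S^2$ is strictly positive. Your explicit observation that ``$C$ is not a set of measure zero'' translates, via polar coordinates, into positivity of the surface measure of $C\cap\mathbb S^2$ is a detail the paper leaves implicit, so nothing is missing.
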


\begin{proof}
  This follows directly from the observation that, in spherical
  coordinates, the function $Y_2^0$ is strictly positive when
  $\vartheta>\vartheta_0$. Thus, in the integral
  \[
  \int_{C\cap\mathbb S^2}Y_2^0(x)\, dx
  \]
  the integrand is strictly positive for all $x$, except possibly for
  $x$ lying in a set of measure zero, and therefore the integral must
  also be strictly positive.
\end{proof}

\begin{proposition}\label{prop_hollow_source}
  If $C\subset\R^3$ is a closed strictly convex cone with vertex at
  the origin, if $C$ is not a set of measure zero, if $C$ has
  connected exterior, and if $C\subseteq
  C_\gamma\setminus\mathrm{int}\,C_{\vartheta_0}$ for some $\gamma\in
  (\vartheta_0,\pi/2 )$, then $C$ is an admissible source cone.
\end{proposition}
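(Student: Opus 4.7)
The plan is to mirror the proof of Proposition~\ref{prop_small_angles}, again exploiting the sign of the spherical harmonic $Y_2^0$ together with the sign change of the Legendre polynomial $P_2$ at the magic angle $\vartheta_0$. By \eqref{YNmDef}, $Y_2^0(\vartheta,\varphi)$ is a positive multiple of $P_2(\cos\vartheta)=\tfrac12(3\cos^2\vartheta-1)$, which vanishes on $[0,\pi/2]$ precisely at $\vartheta=\vartheta_0$, is strictly positive on $[0,\vartheta_0)$, and is strictly negative on $(\vartheta_0,\pi/2]$.

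Under the hypotheses, writing points of $\mathbb{S}^2$ in the spherical coordinates of \eqref{eq_spherical_coords}, the inclusion $C\subseteq C_\gamma\setminus\mathrm{int}\,C_{\vartheta_0}$ places $C\cap\mathbb{S}^2$ in the spherical annulus $\{\vartheta_0\leq\vartheta\leq\gamma\}$. Since $\gamma<\pi/2$, the sign discussion above yields $Y_2^0\leq 0$ throughout $C\cap\mathbb{S}^2$, with equality only on the one-dimensional set $\{\vartheta=\vartheta_0\}\cap\mathbb{S}^2$, which has zero surface measure. Because $C$ is not a set of measure zero in $\R^3$ and is a cone, the identity
\[
\mathrm{vol}_3\bigl(C\cap B(0,R)\bigr)=\frac{R^3}{3}\,\mathrm{vol}_2\bigl(C\cap\mathbb{S}^2\bigr)
\]
forces $C\cap\mathbb{S}^2$ to carry positive surface measure. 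Therefore
\[
\int_{C\cap\mathbb{S}^2} Y_2^0\, dS<0,
\]
which is in particular nonzero, verifying the integral condition of Definition~\ref{admissible-source-corner} with $m=0$.

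The two remaining conditions for admissibility are immediate from the hypotheses: $C\subseteq C_\gamma$ is contained in the strictly convex closed circular cone $C_\gamma$, and $\R^3\setminus C$ is assumed connected. Thus $C$ is an admissible source cone. There is no real obstacle here; the only step worth isolating is the passage from "$C$ has positive three-dimensional measure" to "$C\cap\mathbb{S}^2$ has positive surface measure", which is a routine consequence of the dilation invariance of $C$ and Fubini's theorem.
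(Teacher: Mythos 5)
Your proof is correct and follows essentially the same route as the paper, which handles this case exactly as Proposition~\ref{prop_small_angles} but using that $Y_2^0$ is strictly negative almost everywhere on $C\cap\mathbb S^2$ when $\vartheta_0<\vartheta\leq\gamma<\pi/2$. The extra remark relating the positive three-dimensional measure of $C$ to the positive surface measure of $C\cap\mathbb S^2$ via polar coordinates is a harmless elaboration of what the paper leaves implicit.
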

\begin{proof}
  This case can be handled in exactly the same way as the previous
  example, except that now $Y_2^0$ is strictly negative almost
  everywhere in $C\cap \mathbb S^2$.
\end{proof}

\begin{proposition}
  If $C\subset\R^3$ is a closed cone with vertex at the origin, if $C$
  has connected exterior, and if $C_{\gamma_1}\subseteq C\subseteq
  C_{\gamma_2}$ for some $\gamma_1\in (0,\vartheta_0)$ and
  $\gamma_2\in (\vartheta_0,\pi/2 )$ such that
  \[
  \cos\gamma_1 \sin^2\gamma_1 + \cos\gamma_2 \sin^2\gamma_2 >
  \cos\vartheta_0 \sin^2\vartheta_0,
  \]
  then $C$ is an admissible source corner.
\end{proposition}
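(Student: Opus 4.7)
The plan is to verify the three conditions of Definition~\ref{admissible-source-corner} for $C$. The containment in a strictly convex closed circular cone is immediate from $C\subseteq C_{\gamma_2}$ (since $\gamma_2<\pi/2$), and the connectedness of the exterior is assumed. So the whole substance of the proposition is to prove
\[
I(C):=\int_{C\cap\mathbb S^2}Y_2^0\,dS\neq 0,
\]
and in fact I would show the stronger statement $I(C)>0$.

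Passing to spherical coordinates as in the proof of the first proposition of this section, $I(C)$ equals a strictly positive multiple of
\[
J(C):=\int_{C\cap\mathbb S^2}(3\cos^2\vartheta-1)\sin\vartheta\,d\vartheta\,d\varphi.
\]
The integrand is non-negative on $C_{\vartheta_0}\cap\mathbb S^2$ and non-positive on its complement, so the natural step is to split $J(C)$ at $\vartheta=\vartheta_0$ and estimate each piece one-sidedly. I would also record the identity $\int_0^\gamma(3\cos^2\vartheta-1)\sin\vartheta\,d\vartheta=\cos\gamma\sin^2\gamma$ (used in the first proposition above) and note that $f(\gamma):=\cos\gamma\sin^2\gamma$ attains its maximum over $[0,\pi/2]$ precisely at $\gamma=\vartheta_0$.

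Using the inclusion $C_{\gamma_1}\subseteq C\cap C_{\vartheta_0}$ (which follows from $C_{\gamma_1}\subseteq C$ together with $\gamma_1<\vartheta_0$) and the non-negativity of the integrand on $C_{\vartheta_0}\cap\mathbb S^2$, the positive piece of $J(C)$ is bounded below by $2\pi\cos\gamma_1\sin^2\gamma_1$. Using the inclusion $C\setminus\mathrm{int}\,C_{\vartheta_0}\subseteq C_{\gamma_2}\setminus\mathrm{int}\,C_{\vartheta_0}$ and the non-positivity of the integrand on the complement of $C_{\vartheta_0}$, the absolute value of the negative piece is bounded above by $2\pi\bigl(\cos\vartheta_0\sin^2\vartheta_0-\cos\gamma_2\sin^2\gamma_2\bigr)$. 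Adding the two estimates,
\[
J(C)\geq 2\pi\bigl[\cos\gamma_1\sin^2\gamma_1+\cos\gamma_2\sin^2\gamma_2-\cos\vartheta_0\sin^2\vartheta_0\bigr]>0
\]
by the hypothesis, so $I(C)>0$ and $C$ is an admissible source cone.

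There is no real obstacle in the argument: it is a quantitative interpolation between propositions~\ref{prop_small_angles} and~\ref{prop_hollow_source}, and the only care needed is to line up the two monotone set-inclusions $C_{\gamma_1}\subseteq C$ and $C\subseteq C_{\gamma_2}$ with the correct signs of $3\cos^2\vartheta-1$ on the inner and outer pieces of the sign decomposition at the magic angle $\vartheta_0$.
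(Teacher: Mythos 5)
Your proof is correct and follows essentially the same route as the paper: split the integral of $Y_2^0$ at the magic angle $\vartheta_0$, use the inclusions $C_{\gamma_1}\subseteq C\subseteq C_{\gamma_2}$ together with the sign of $3\cos^2\vartheta-1$ to bound the positive and negative pieces, and evaluate via the antiderivative $\cos\gamma\sin^2\gamma$. Your write-up is only slightly more explicit (recording that $\cos\gamma\sin^2\gamma$ is maximized at $\vartheta_0$), but the argument is the paper's.
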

\begin{proof}
  Again, the point is that, in spherical polar coordinates, $Y_2^0$ is
  strictly positive when $\vartheta<\vartheta_0$ and strictly negative
  when $\vartheta_0<\vartheta<\pi/2$, and so
  \[
  \int_{C\cap\mathbb S^2}Y_2^0 \,dS \geq
  \int_{C_{\gamma_1}\cap\mathbb S^2} Y_2^0\,dS +
  \int_{\left(C_{\gamma_2}\setminus
    C_{\vartheta_0}\right)\cap\mathbb S^2} Y_2^0\,dS.
  \]
  After moving to spherical coordinates, writing $Y_2^0$ in terms of
  $P_2$ and simplifying, we see that the original integral is strictly
  positive if
  \[
  \cos\gamma_1 \sin^2\gamma_1 + \left( \cos\gamma_2 \sin^2\gamma_2 -
  \cos\vartheta_0 \sin^2\vartheta_0 \right) > 0,
  \]
  and this is precisely the hypothesis we made.
\end{proof}

We can prove the main theorem about always scattering sources. Note in
the proof that since the cone $C$ might have a very rough boundary, we
cannot use the simple integration by parts formula from
\cite{blaasten18_nonrad_sourc_trans_eigen_vanis}.
\begin{theorem} \label{source-theorem}
  Let $\gamma > 0$ and let $\Phi\in
  e^{-\gamma\left|\cdot\right|}\,L^2(\R^3)$ be such that
  $\Phi|_H\equiv0$ for some open half-space $H\subset\mathbb R^3$.
  Assume also that the origin belongs to the component of $\R^3
  \setminus \mathrm{supp}\,\Phi$ containing $H$. Let $\varphi\in
  L^\infty_{\mathrm c}(\R^3)$ be such that there exist constants
  $\alpha > 0$ and $\varphi_0\in\mathbb C$ such that
  \[
  \left|\varphi(x)-\varphi_0\right| \lesssim \left|x\right|^\alpha
  \]
  for almost all $x\in\R^3$. Let $C\subset\R^3$ be an admissible
  source cone with vertex at the origin. Finally, assume that
  $u\in\mathring B^\ast_2$ solves the equation
  \[
  \left(-\Delta-\lambda\right)u=\varphi\,\chi_C+\Phi
  \]
  in $\R^3$. Then $\varphi_0=0$.
\end{theorem}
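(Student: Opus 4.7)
The plan is to use harmonic complex geometrical optics (CGO) exponentials to obtain a vanishing integral identity for the source $\varphi\chi_C+\Phi$, and then exploit Laplace's second integral representation (Theorem~\ref{thm_Laplace_repr}) to convert the admissibility of $C$ into a non-vanishing condition that forces $\varphi_0 = 0$.

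For the CGO setup, I would choose coordinates so that the cone axis points along a unit vector $\mu\in\mathbb S^2$ with $\mu\cdot\omega\geq c>0$ for all $\omega\in C\cap\mathbb S^2$ (possible because $C$ is contained in a strictly convex circular cone), and so that the component hypothesis on $H$ yields $\mu\cdot x\geq c>0$ throughout $\operatorname{supp}\Phi$ as well. For $\tau>0$ and $\eta\in\mu^\perp$ with $|\eta|^2=\tau^2+\lambda$, set $\rho=\tau\mu+i\eta$; then $\rho\cdot\rho=-\lambda$, so $v_\rho(x)=e^{-\rho\cdot x}$ solves $(-\Delta-\lambda)v_\rho=0$ and decays exponentially in $\tau$ both on $C$ and on $\operatorname{supp}\Phi$. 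I would then prove the vanishing identity
\[
\int_{\mathbb R^3} e^{-\rho\cdot x}\bigl(\varphi(x)\chi_C(x)+\Phi(x)\bigr)\,dx = 0
\]
by combining Theorem~\ref{rellich-type-theorem} (applied using the exponential decay of $\Phi$) with unique continuation across the source-free exterior to deduce that $u$ vanishes on the unbounded component of $\mathbb R^3\setminus\operatorname{supp}(\varphi\chi_C+\Phi)$ containing $H$, and then pairing the equation against a cut-off $\chi_R v_\rho$, sending $R\to\infty$ and using the exponential decay of $v_\rho$ in the $\mu$-direction, together with the $\mathring B^\ast_2$-control on $u$, to kill all boundary contributions.

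Next, write $\varphi=\varphi_0+(\varphi-\varphi_0)$. Radial integration over $C$ gives the leading term
\[
\int_C e^{-\rho\cdot x}\,dx = 2\int_{C\cap\mathbb S^2}\frac{dS(\omega)}{(\rho\cdot\omega)^3},
\]
of size $\tau^{-3}$ after scaling; Lemma~\ref{integral-lemma} bounds the $(\varphi-\varphi_0)$-piece by $O(\tau^{-3-\alpha})$, and the $\Phi$-piece is $O(e^{-c\tau})$ since $\Re\rho\cdot x\geq c\tau$ on $\operatorname{supp}\Phi$. Multiplying the vanishing identity by $\tau^3$ and passing to the limit along $\rho/\tau\to\mu+i\hat\eta$ leaves
\[
\varphi_0\int_{C\cap\mathbb S^2}\frac{dS(\omega)}{(\mu\cdot\omega+i\hat\eta\cdot\omega)^3}=0.
\]
Averaging this identity over $\hat\eta$ around the unit circle in $\mu^\perp$ with weight $e^{im\psi}$, Theorem~\ref{thm_Laplace_repr} rewrites the resulting double integral as a non-zero constant multiple of $\int_{C\cap\mathbb S^2}Y_2^m\,dS$, which by admissibility is non-zero for some $m\in\{-2,-1,0,1,2\}$; hence $\varphi_0=0$.

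The main obstacle, as the paper itself signals in the remark preceding the theorem, is the very first step: because $\partial C$ may be genuinely rough (the only structural hypothesis is containment in a strictly convex circular cone plus connected exterior), the clean Cauchy-data integration-by-parts formula used in \cite{blaasten18_nonrad_sourc_trans_eigen_vanis} is not at my disposal. The vanishing identity must instead be extracted via the Rellich-type theorem and a $\mathring B^\ast_2$/CGO duality pairing, carefully tracking the exponential decay of both $v_\rho$ and $\Phi$ in order to push the cut-off to infinity without picking up spurious boundary contributions from $\partial C$ or from infinity.
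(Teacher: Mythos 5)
Your overall strategy (CGO exponentials $e^{-\rho\cdot x}$ with $\rho\cdot\rho=-\lambda$, splitting $\varphi=\varphi_0+(\varphi-\varphi_0)$, Lemma~\ref{integral-lemma} for the remainder, radial integration, averaging in $\psi$ against $e^{im\psi}$ and Laplace's representation, Theorem~\ref{thm_Laplace_repr}, to land on $\varphi_0\int_{C\cap\mathbb S^2}Y_2^m\,dS=0$) is exactly the paper's, and that part of your argument is sound. The gap is in your very first step, the \emph{global} vanishing identity $\int_{\mathbb R^3}e^{-\rho\cdot x}(\varphi\chi_C+\Phi)\,dx=0$. You justify it by choosing $\mu$ so that $\mu\cdot x\geq c>0$ both on $C$ and on $\operatorname{supp}\Phi$, but nothing in the hypotheses ties the orientation of the half-space $H$ to the cone axis: e.g.\ the cone may point along $+x_3$ while $\operatorname{supp}\Phi\subset\{x_3\leq-1\}$, and then no unit vector $\mu$ serves both (the dual-cone condition forces $\mu$ near $(0,0,1)$, while $\mu\cdot x\geq c$ on a half-space forces $\mu$ to be its inward normal). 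Without that alignment, $|e^{-\rho\cdot x}|=e^{-\tau\mu\cdot x}$ grows like $e^{\tau|x|}$ on parts of $\operatorname{supp}\Phi$, so for $\tau>\gamma$ the term $\int e^{-\rho\cdot x}\Phi\,dx$ need not even converge; likewise, the annulus terms at $|x|\sim R$ from your cut-off $\chi_R$ live partly in regions where $u$ does \emph{not} vanish (inside $\operatorname{supp}\Phi$, or in components of the complement of the source not containing $H$), and there the $\mathring B^\ast_2$ bound $\bigl(\int_{|x|<R}|u|^2\bigr)^{1/2}=o(\sqrt R)$ cannot beat the exponential growth of $e^{-\rho\cdot x}$, so the limit $R\to\infty$ does not close.

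The missing idea is localization at the vertex, which is how the paper sidesteps $\Phi$ and all far-field boundary terms entirely: pick $r>0$ with $B(0,2r)$ contained in the component of $\mathbb R^3\setminus\operatorname{supp}\Phi$ containing $H$; by Theorem~\ref{rellich-type-theorem}, unique continuation and the connected exterior of $C$, $u\equiv0$ on $B(0,2r)\setminus C$; then pair the equation against $\chi\,e^{-\rho\cdot x}$ with a \emph{fixed} cut-off $\chi$ supported in $B(0,2r)$ and equal to $1$ on $B(0,r)$. Since $\chi\Phi\equiv0$, the source contribution is purely $\int_C e^{-\rho\cdot x}\chi\varphi\,dx$, and the only error is the commutator term supported in $C\cap\bigl(B(0,2r)\setminus\overline B(0,r)\bigr)$, where $x_3\gtrsim r$ forces $|e^{-\rho\cdot x}|\lesssim e^{-\delta\tau}=o(\tau^{-3})$; no limit in $R$ and no control of $u$ or $\Phi$ at infinity is ever needed. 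With that replacement your remaining steps go through as written and coincide with the paper's proof.
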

\begin{proof}
  Let $r > 0$ be so small that the ball $B(0,2r)$ is contained in the
  component of $\mathbb R^3\setminus\mathrm{supp}\,\Phi$ containing
  $H$. By Theorem~\ref{rellich-type-theorem}, unique continuation and
  the connectedness of the exterior of $C$ from
  Definition~\ref{admissible-source-corner}, we must have
  $u|_{B(0,2r)\setminus C}\equiv0$. We also observe that, by rotating
  the entire configuration, if necessary, we may assume that $C$ is
  oriented so that $C\setminus\{0\} \subset \R^2\times\R_+$.

  Next, let $\sqrt\lambda \leq \tau < \infty$ and $\psi\in\R$. We
  shall employ the complex geometrical optics solution
  $\exp(-\rho\cdot x)$ where $\rho\in\C^3$ is defined by
  \[
  \rho = \rho(\tau,\psi) = \tau(0,0,1) + i\sqrt{\tau^2+\lambda}
  (\cos\psi,\sin\psi,0).
  \]
  In particular, we have $\rho\cdot\rho=-\lambda$, so that
  \[
  (-\Delta-\lambda) e^{-\rho\cdot x}=0.
  \]
  We study the limit $\tau\to\infty$, and so all the implicit
  constants below will be independent of $\tau$, even though they are
  allowed to depend on $\psi$, $\varphi$, $\alpha$, $r$, $\varphi_0$,
  $C$, $u$, the implicit constant in the theorem statement, and the
  cut-off function $\chi$ chosen below. In the

  We choose a cut-off function $\chi\in C_{\mathrm c}^\infty(\R^3)$ so
  that $\chi|_{B(0,r)}\equiv1$ and that $\chi|_{\R^3\setminus
    B(0,2r)}\equiv0$, and we write $A$ for the annulus
  $B(0,2r)\setminus\overline B(0,r)$. Then, using our knowledge of the
  supports of $\chi, \nabla\chi, \Delta\chi$, $\Phi$ and $u$, we may
  argue that
  \begin{align*}
    0&=\int_{\R^3} \chi\, u (-\Delta-\lambda) e^{-\rho\cdot x}
    \,dx = \int_{\R^3} e^{-\rho\cdot x} (-\Delta-\lambda) (\chi
    u) \,dx \\ &= \int_C e^{-\rho\cdot x}\,\chi\,\varphi\,dx -
    \int_{C\cap A} e^{-\rho\cdot x} (2\,\nabla\chi\cdot\nabla
    u+u\,\Delta\chi)\,dx.
  \end{align*}
  In $C$, the factor $e^{-\rho\cdot x}$ is exponentially decaying, and
  we may estimate, for some constant $\delta > 0$ depending only on
  $C$ and $r$, that $\left|e^{\rho\cdot x}\right|\lesssim
  e^{-\delta\tau}$ for $x\in C\cap A$. We now conclude that
  \[
  \int_{C\cap A} e^{-\rho\cdot x} (2\,\nabla\chi\cdot\nabla
  u+u\,\Delta\chi) \,dx \lesssim e^{-\delta\tau} = o(\tau^{-3}),
  \]
  since the constants are independent of $\tau$.

  We may now continue with
  \[
  o(\tau^{-3}) = \int_{C}e^{-\rho\cdot x}\,\chi\,\varphi \,dx =
  \varphi_0 \int_{C} e^{-\rho\cdot x}\,\chi \,dx +
  \int_{C} e^{-\rho\cdot x}\,\chi (\varphi(x)-\varphi_0) \,dx.
  \]
  We have $|\Re\rho|/|\rho|\geq 1/\sqrt3$. Thus, by
  Lemma~\ref{integral-lemma},
  \[
  \int_{C} e^{-\rho\cdot x}\,\chi (\varphi(x)-\varphi_0) \,dx
  \lesssim |\rho|^{-\alpha-3} \asymp \tau^{-3-\alpha} = o(\tau^{-3}).
  \]
  We may also estimate
  \[
  \int_{C} e^{-\rho\cdot x} (1-\chi)\,dx \lesssim
  e^{-\delta\tau/2} \int_{C\setminus B(0,r)} e^{-\Re\rho\cdot
    x/2} \,dx\lesssim e^{-\delta\tau/2} = o(\tau^{-3}).
  \]
  Thus, we may further continue with a change of variables to get
  \[
  o(\tau^{-3}) = \varphi_0 \int_C e^{-\rho\cdot x}\, dx =
  \frac{\varphi_0}{|\rho|^3} \int_C e^{-\rho/|\rho|\cdot y}\,
  dy,
  \]
  leading to
  \[
  \varphi_0 \int_C e^{-\rho/|\rho|\cdot x} \,dx = o(1).
  \]
  Since $e^{-\rho/|\rho|\cdot x} \lesssim e^{-x_3/\sqrt3}$ in $C$, and
  $\rho/|\rho|\to\rho_0/|\rho_0|$, where
  \[
  \rho_0=\rho_0(\tau,\psi)=\tau(0,0,1)+i\tau(\cos\psi,\sin\psi,0),
  \]
  we get by the dominated convergence theorem, after taking
  $\tau\to\infty$,
  \[
  \varphi_0\int_Ce^{-\rho_0/\left|\rho_0\right|\cdot x} \,dx = 0.
  \]

  Moving next to polar coordinates, we obtain
  \[
  0 = \varphi_0 \int_C e^{-\rho_0/|\rho_0|\cdot x} \,dx =
  \varphi_0 \int_{C\cap\mathbb S^2} \int_0^\infty
  e^{-(\rho_0/|\rho_0|\cdot y)r}\, r^2\, dr\, dy = \varphi_0
  \int_{C\cap\mathbb S^2} \frac2{(\rho_0/|\rho_0|\cdot
    y)^{3}}\, dy.
  \]
  Let $m\in\left\{-2,-1,0,1,2\right\}$. Moving to spherical
  coordinates, multiplying both sides with $e^{im\psi}$, and
  integrating with respect to $\psi$ over the interval $(0,2\pi)$
  gives
  \begin{align*}
    0 &= \varphi_0 \int_0^{2\pi} e^{im\psi}
    \int_{C\cap\mathbb S^2} \frac{\sin\vartheta\, d\varphi\,
      d\vartheta}{(\cos\vartheta + i\sin\vartheta
      \cos(\psi-\varphi))^3}\, d\psi\\ &=\varphi_0
    \int_{C\cap\mathbb S^2} e^{im\varphi} \int_0^{2\pi}
    \frac{e^{im\psi'}\, d\psi'} {(\cos\vartheta + i \sin\vartheta
      \cos\psi')^3}\, \sin\vartheta\, d\varphi\, d\vartheta.
  \end{align*}
  Using Laplace's second representation formula,
  i.e. Theorem~\ref{thm_Laplace_repr}, and multiplying by appropriate
  constant factors gives
  \[
  0 = \varphi_0 \int_{C\cap\mathbb S^2}Y_2^m \,dS.
  \]
  Since, by Definition~\ref{admissible-source-corner}, the last
  integral must be nonzero for some choice of $m$, we conclude that
  $\varphi_0=0$.
\end{proof}

\section{Complex geometrical optics solutions}\label{sec_cgo}

\noindent
Here we construct so called complex geometrical optics solutions
(CGO-solutions for short) for the Schr\"odinger equation with a cone
like potential.  To this end we first show that the admissible cones
give continuous multiplication operators between certain Sobolev
spaces. This argument closely follows the argument in
\cite{paeivaerinta17_stric_convex_corner_scatt}. We then construct the
relevant CGO-solutions, as in that paper, as a consequence of
the $L^p$-resolvent estimates of
\cite{kenig87_unifor_sobol_inequal_unique_contin} and the multiplier
properties of the cone like potential. We end the section by giving
some example cones with fairly complicated structure that admit
CGO-solutions.

We need to show that $V$ is a pointwise multiplier. The idea is to
first establish that if the cross-section of the cone is such that the
characteristic function of the cross-section has suitable properties,
then also a cylinder with the same cross-section has those properties.
This can then be used to cut a infinite cone in to finite pieces with
the relevant property, and using scaling properties of the Sobolev
norms we get the relevant property for the cone itself.

\medskip
Our starting point is a slightly reformulated version of
Proposition~3.9 in \cite{paeivaerinta17_stric_convex_corner_scatt}.
\begin{lemma} \label{lem_straightcyl}
  Let $D \subset \mathbb R^{2}$ be bounded and such that $\chi_D \in
  H^{\tau,2}(\mathbb R^2)$ for some $\tau \in [0,1/2)$, and let $C$ be
    the cylinder
  \[
  C = \{ (x',x_3) \in \mathbb R^3 \;\colon\; x' \in D, \; |x_3| \leq 1
  \}.
  \]
  Then $\chi_C \in H^{\tau,p}(\mathbb R^3)$ for all $p\in[1,2]$.
\end{lemma}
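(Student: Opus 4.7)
The plan is to exploit the product structure of the characteristic function,
\[
\chi_C(x',x_3)=\chi_D(x')\,\chi_{[-1,1]}(x_3),
\]
first establishing $\chi_C\in H^{\tau,2}(\mathbb R^3)$ by a tensor-product argument and then descending to $H^{\tau,p}(\mathbb R^3)$ for $p\in[1,2]$ using the compact support of $C$.

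For the tensor-product step, I would first verify that the one-dimensional factor $\chi_{[-1,1]}$ lies in $H^{\tau,2}(\mathbb R)$ for every $\tau<1/2$. This is standard: from the Slobodeckij viewpoint, the integrand $|\chi_{[-1,1]}(s)-\chi_{[-1,1]}(t)|^2\,|s-t|^{-1-2\tau}$ is nonzero only when $s$ and $t$ lie on opposite sides of one of the endpoints $\pm 1$, and it is integrable there precisely because $2\tau<1$. The tensor-product estimate
\[
\|\chi_D\otimes\chi_{[-1,1]}\|_{H^{\tau,2}(\mathbb R^3)}\lesssim \|\chi_D\|_{H^{\tau,2}(\mathbb R^2)}\,\|\chi_{[-1,1]}\|_{H^{\tau,2}(\mathbb R)}
\]
then follows on the Fourier side from the elementary Bessel-weight bound
\[
(1+|\xi|^2+|\eta|^2)^\tau \leq (1+|\xi|^2)^\tau(1+|\eta|^2)^\tau \qquad(\tau\geq 0),
\]
which itself is a consequence of $(1+|\xi|^2)(1+|\eta|^2)\geq 1+|\xi|^2+|\eta|^2$. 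Combined with Fubini, this yields $\chi_C\in H^{\tau,2}(\mathbb R^3)$.

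To descend to $p\in[1,2]$, I use that $\chi_C$ is bounded and compactly supported. The $L^p$-part of the norm is immediate by H\"older's inequality on $\operatorname{supp}\chi_C$. For the Slobodeckij seminorm
\[
\iint_{\mathbb R^3\times\mathbb R^3}\frac{|\chi_C(x)-\chi_C(y)|^p}{|x-y|^{3+\tau p}}\,dx\,dy,
\]
I would split according to whether $|x-y|$ is small or large. Since $\chi_C$ is $\{0,1\}$-valued one has $|\chi_C(x)-\chi_C(y)|^p=|\chi_C(x)-\chi_C(y)|^2$ pointwise. On the near-diagonal region $\{|x-y|\leq 1\}$, the inequality $\tau p\leq 2\tau$ gives $|x-y|^{-3-\tau p}\leq|x-y|^{-3-2\tau}$, so this piece is controlled by the $H^{\tau,2}$-seminorm of $\chi_C$ just established. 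On the far-diagonal region $\{|x-y|>1\}$, the compact support of $\chi_C$ combined with $\int_{|x-y|>1}|x-y|^{-3-\tau p}\,dy<\infty$ (which holds since $\tau p>0$) gives a bound in terms of $\|\chi_C\|_{L^2}^2$.

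I expect the tensor-product step to be the cleanest part, while the main bookkeeping lies in the last step: the passage from $p=2$ to $p\in[1,2]$ requires careful splitting of the Slobodeckij integral but no genuinely new ideas beyond those of Proposition~3.9 of \cite{paeivaerinta17_stric_convex_corner_scatt}. The only subtlety worth flagging is the need to coordinate the Fourier/Bessel-potential definition used in step two with the Slobodeckij definition used in step three, which for $\tau\in(1/4,1/2)$ and $p\in[1,2]$ are equivalent up to multiplicative constants on the space of compactly supported functions.
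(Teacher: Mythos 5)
Your $p=2$ step is essentially the paper's own argument: the paper also writes $\chi_C=\chi_D\otimes\chi_{[-1,1]}$, uses the submultiplicativity $\langle\xi\rangle^{2\tau}\lesssim\langle\xi'\rangle^{2\tau}\langle\xi_3\rangle^{2\tau}$ on the Fourier side, and integrates out the $\xi_3$-variable (the paper bounds $|\widehat\chi_{[-1,1]}(\xi_3)|\lesssim\langle\xi_3\rangle^{-1}$ explicitly, which is just a more concrete way of saying $\chi_{[-1,1]}\in H^{\tau,2}(\R)$ for $\tau<1/2$). So up to and including the conclusion $\chi_C\in H^{\tau,2}(\R^3)$ there is no real difference.

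The problematic point is your last step. The claim that the Bessel-potential space $H^{\tau,p}$ and the Slobodeckij space defined by the Gagliardo seminorm are ``equivalent up to multiplicative constants on compactly supported functions'' is false for $p\neq2$: the Slobodeckij space is the Besov space $B^\tau_{p,p}$ while $H^{\tau,p}$ is the Triebel--Lizorkin space $F^\tau_{p,2}$, and these are genuinely different spaces locally, not merely differently normed. What saves your argument is that only one direction is needed, and it is the direction that holds for $p\leq2$: the elementary embedding $B^s_{p,\min(p,2)}\hookrightarrow F^s_{p,2}$ gives $B^\tau_{p,p}\hookrightarrow H^{\tau,p}$ for $1\leq p\leq2$, so finiteness of your split Slobodeckij integral does imply $\chi_C\in H^{\tau,p}$; but you must invoke this one-sided embedding rather than an equivalence. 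Alternatively you can skip the Slobodeckij detour entirely, as the paper does, by using the standard localization fact that a distribution in $H^{\tau,2}$ with fixed compact support belongs to $H^{\tau,p}$ for all $p\leq2$. Two minor points: your far-diagonal bound uses $\tau p>0$, whereas the lemma allows $\tau=0$ (that case is trivial, $H^{0,p}=L^p$, but should be dispatched separately); and the restriction $\tau\in(1/4,1/2)$ you mention at the end plays no role in this lemma, which is stated for all $\tau\in[0,1/2)$.
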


\begin{proof}
  Let us prove the claim for $p=2$. The claim for $p<2$ follows from
  this case because $\chi_C$ has compact support. Set $x:=(x',x_3)$,
  and $I := [-1,1]$. Note that
  \[
  \chi_C(x) = \chi_D(x') \chi_I(x_3),
  \]
  so that
  \begin{align*}
    \widehat \chi_C(\xi) = \widehat \chi_D(\xi') \widehat \chi_I
    (\xi_3).
  \end{align*}
  For $\chi_I$ one has the estimate
  \[
  |\widehat \chi_I(\xi_3) | = \frac{ |\sin(\xi_3) |}{ |\xi_3|} \leq C
  \langle \xi_3 \rangle^{-1}.
  \]
  One has $ \langle \xi \rangle^{2\tau} \leq C \langle \xi'
  \rangle^{2\tau} \langle \xi_3 \rangle^{2\tau}.  $ Using these facts
  gives then that
  \begin{align*}
    \|\chi_C\|^2_{H^{\tau,2}(\mathbb R^3)} &\leq C \int_{\mathbb R^3}
    \langle \xi' \rangle^{2\tau} |\widehat \chi_D (\xi')|^2 \langle
    \xi_3 \rangle^{2\tau} |\widehat \chi_I (\xi_3)|^2\,d\xi \\ &\leq C
    \| \chi_D \|^2_{H^{\tau,2}(\mathbb R^{2})} \int_{\mathbb R}
    \langle \xi_3 \rangle^{2 \tau} \langle \xi_3 \rangle^{-2} \,
    d\xi_3 \\ &< \infty,
  \end{align*}
  provided that $\tau < 1/2$.
\end{proof}

\noindent
The previous lemma can be used as in
\cite{paeivaerinta17_stric_convex_corner_scatt} to deduce the following
proposition.

\begin{lemma}  \label{lem_complete_cone_1}
  Let $\tau \in [0,1/2)$ and suppose $D \subset \mathbb R^{2}$ is
  bounded with $\chi_D \in H^{2,\tau}(\mathbb R^2)$. For $\delta >
  0$ let $C_\delta$ be the cone
  \[
  C_\delta = \{ (t\delta x',t) \in \mathbb R^3 \;\colon\; t \in
  [0,\infty),\; x' \in D \}.
  \]
  Then $\langle x\rangle^{-\alpha}\chi_{C_\delta} \in
  H^{\tau,p}(\mathbb R^3)$ if $p\in(1,2]$ and $\alpha > 3/p$.
\end{lemma}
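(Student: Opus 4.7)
My plan is to combine the scale invariance $\chi_{C_\delta}(\lambda x)=\chi_{C_\delta}(x)$ for $\lambda>0$ with a dyadic partition of unity in the $x_3$ variable, so that every annular piece of $C_\delta$ becomes a rescaled copy of a single truncated cone at unit height. That truncated cone will then be handled by a smooth change of variables that straightens it into a cylinder, after which Lemma~\ref{lem_straightcyl} applies. The weight $\langle x\rangle^{-\alpha}$ provides the geometric-series decay that makes summing the dyadic pieces work exactly under the hypothesis $\alpha>3/p$.

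Concretely, I fix $\eta\in C_c^\infty(\mathbb R)$ supported in $[1/2,2]$ with $\sum_{j\in\mathbb Z}\eta(2^{-j}t)\equiv 1$ for $t>0$, and decompose $u:=\langle x\rangle^{-\alpha}\chi_{C_\delta}$ as $u=\sum_{j\geq 0}u_j$, where for $j\geq 1$ I set $u_j(x):=\eta(2^{-j}x_3)\langle x\rangle^{-\alpha}\chi_{C_\delta}(x)$, and $u_0$ absorbs the bounded region near the vertex. On $\operatorname{supp} u_j$ with $j\geq 1$ one has $\langle x\rangle\asymp 2^j$, so by scale invariance the rescaling $g_j(y):=u_j(2^jy)$ factors as $2^{-j\alpha}\,\psi_j(y)\,w(y)$, where $w(y):=\eta(y_3)\chi_{C_\delta}(y)$ is supported in the fixed truncated cone $C_\delta\cap\{1/2\leq y_3\leq 2\}$ and $\psi_j(y):=2^{j\alpha}\langle 2^jy\rangle^{-\alpha}$ is smooth with all derivatives bounded uniformly in $j$ on $\operatorname{supp} w$. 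Multiplication by such $\psi_j$ is therefore a bounded operator on $H^{\tau,p}$ with norm independent of $j$, and the standard dilation estimate $\|g_j(\cdot/2^j)\|_{H^{\tau,p}}\lesssim 2^{3j/p}\|g_j\|_{H^{\tau,p}}$, valid for $j\geq 0$ and $\tau\geq 0$, yields $\|u_j\|_{H^{\tau,p}}\lesssim 2^{j(3/p-\alpha)}\|w\|_{H^{\tau,p}}$. Summing the geometric series (convergent exactly for $\alpha>3/p$) and using the bounded overlap of the supports reduces the whole lemma to the single assertion $\|w\|_{H^{\tau,p}(\mathbb R^3)}<\infty$; the piece $u_0$ is handled analogously and is trivially finite since it is supported in a bounded set.

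The main obstacle is this last step: showing that the truncated cone $w=\eta(x_3)\chi_{C_\delta}$ lies in $H^{\tau,p}(\mathbb R^3)$. On $\operatorname{supp} w$ the variable $x_3$ is bounded away from $0$, so the map $\Psi(y,x_3):=(y/(\delta x_3),x_3)$ is a smooth diffeomorphism (onto its image) with smooth bounded Jacobian, sending the truncated cone onto the cylinder $D\times[1/2,2]$ and satisfying $w\circ\Psi^{-1}(y',x_3)=\eta(x_3)\chi_D(y')$. The push-forward is a smooth truncation of exactly the cylinder characteristic function treated by Lemma~\ref{lem_straightcyl}, hence it lies in $H^{\tau,p}(\mathbb R^3)$ by that lemma together with the fact that multiplication by $\eta\in C_c^\infty$ preserves $H^{\tau,p}$. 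Since bi-Lipschitz diffeomorphisms with smooth bounded Jacobian defined on a compact region preserve $H^{\tau,p}$ for $\tau\in[0,1)$ (so in particular for $\tau\in[0,1/2)$), the bound $\|w\|_{H^{\tau,p}}\lesssim\|w\circ\Psi^{-1}\|_{H^{\tau,p}}<\infty$ follows, completing the proof.
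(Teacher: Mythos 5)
Your treatment of the pieces away from the vertex is sound, and it is essentially the argument the paper intends: the paper's own proof is a two-line reduction to Proposition~3.7 of \cite{paeivaerinta17_stric_convex_corner_scatt} with Lemma~\ref{lem_straightcyl} substituted for their cylinder result, and that argument is exactly your dyadic decomposition, the uniform multiplier bound for the rescaled weight, the expanding-dilation estimate $\lambda^{3/p}$, and the straightening of the fixed truncated cone onto a cylinder via $(x',x_3)\mapsto(x'/(\delta x_3),x_3)$. Two small touch-ups there: the cylinder of Lemma~\ref{lem_straightcyl} is $D\times[-1,1]$ while your $\eta$ is supported in $[1/2,2]$, so either translate/stack the cylinder or redo the one-line tensor computation for $\chi_D\otimes\eta$; and since the map is smooth with nondegenerate Jacobian on a neighbourhood of the compact support, it is cleaner (and avoids any delicacy of bi-Lipschitz invariance of $H^{\tau,p}$ for $p$ close to $1$) to invoke invariance under smooth diffeomorphisms, which holds for all $\tau$, rather than a bi-Lipschitz statement restricted to $\tau\in[0,1)$.

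The genuine gap is the vertex piece $u_0$. ``Trivially finite since it is supported in a bounded set'' is not a reason: compact support gives no Sobolev regularity, and up to a harmless smooth factor $u_0$ is precisely $\chi_{C_\delta\cap\{x_3\le 2\}}$, the characteristic function of the bounded cone piece whose lateral boundary is the cone over the possibly fractal $\partial D$ -- the very kind of object the lemma is about. Nor can $u_0$ be ``handled analogously'' as a single block, because your straightening map degenerates at $x_3=0$: its derivatives and Jacobian blow up at the vertex, so the diffeomorphism step does not apply there, and the dilation estimate you quote is stated only for expanding scalings ($j\ge0$). The repair stays entirely inside your framework: keep the full sum over $j\in\mathbb Z$ instead of lumping $j\le0$ into one term. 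For $j\le 0$ one has $u_j(x)=g_j(2^{-j}x)$ with $2^{-j}\ge1$, and the contracting-dilation estimate $\|f(\lambda\,\cdot)\|_{H^{\tau,p}}\lesssim\lambda^{\tau-3/p}\|f\|_{H^{\tau,p}}$ for $\lambda\ge1$ gives $\|u_j\|_{H^{\tau,p}}\lesssim 2^{|j|(\tau-3/p)}\|w\|_{H^{\tau,p}}$, the weight now being uniformly harmless since $\langle 2^j y\rangle^{-\alpha}\le 1$ with uniformly bounded derivatives on $\operatorname{supp} w$. The resulting series converges because $\tau<1/2<3/p$, so no hypothesis beyond those of the lemma is used; with this correction your proof is complete and coincides with the route taken in the cited reference.
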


\begin{proof}
  This lemma is the analogue of Proposition~3.7 in
  \cite{paeivaerinta17_stric_convex_corner_scatt}. Its proof can be
  easily modified to work in our case.  We only need to use our
  Lemma~\ref{lem_straightcyl} instead of Proposition~3.9 in
  \cite{paeivaerinta17_stric_convex_corner_scatt}.
\end{proof}

\noindent
We consider a potential $V$ defined using the characteristic function
of the previous lemma next.

\begin{proposition}  \label{prop_mult}
  Let $C_\delta$ be as in Lemma~\ref{lem_complete_cone_1}.  Consider
  the potential
  \[
  V(x):=\varphi(x)\langle{x}\rangle^{-\alpha}\chi_{C_\delta},
  \]
  where $\tau \in \big(0,\tfrac{1}{2}\big)$, $\alpha >9/4$ and
  $\varphi \in C^{\tau + \varepsilon}(\mathbb R^3)$ for some
  $\varepsilon > 0$. Then
  \begin{equation} \label{eq_mult}
    \| V g \|_{H^{\tau,4/3}(\mathbb R^{3})} \leq C
    \|g\|_{H^{\tau,4}(\mathbb R^{3})}
  \end{equation}
  for any $g\in H^{\tau,4}(\mathbb R^3)$ and
  \begin{equation} \label{eq_itself}
    \| V \|_{H^{\tau,4/3}(\mathbb R^3)} < \infty.
  \end{equation}
\end{proposition}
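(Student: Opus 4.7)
The plan is to reduce the problem to the Sobolev regularity of the cone's characteristic function provided by Lemma~\ref{lem_complete_cone_1}, and then to apply a fractional Leibniz (Kato--Ponce type) product estimate to shift regularity between $V$ and $g$. Set $W := \langle x\rangle^{-\alpha}\chi_{C_\delta}$ so that $V = \varphi W$.

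Since $\varphi \in C^{\tau+\varepsilon}(\mathbb R^3)$ with $\tau+\varepsilon > \tau$, multiplication by $\varphi$ is a bounded pointwise multiplier on $H^{\tau,p}(\mathbb R^3)$ for every $p \in (1,\infty)$. Hence it suffices to prove both assertions for $W$ in place of $V$. For \eqref{eq_itself} I would simply invoke Lemma~\ref{lem_complete_cone_1} with $p = 4/3$: the hypothesis $\alpha > 9/4 = 3/(4/3)$ is exactly the proposition's assumption, so $W \in H^{\tau,4/3}(\mathbb R^3)$, whence $V \in H^{\tau,4/3}(\mathbb R^3)$.

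For \eqref{eq_mult} I would apply the fractional product inequality
\begin{equation*}
\|Wg\|_{H^{\tau,4/3}(\mathbb R^3)} \lesssim \|W\|_{H^{\tau,p_1}}\|g\|_{L^{q_1}} + \|W\|_{L^{p_2}}\|g\|_{H^{\tau,q_2}},
\end{equation*}
where the exponents satisfy $1/p_j + 1/q_j = 3/4$. For the first term I would pick $q_1$ by the Sobolev embedding $H^{\tau,4}(\mathbb R^3) \hookrightarrow L^{q_1}(\mathbb R^3)$, i.e.\ $1/q_1 = 1/4 - \tau/3$, giving $1/p_1 = 1/2 + \tau/3$. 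For $\tau \in (0,1/2)$ one has $p_1 \in (3/2, 2)$ and $3/p_1 = 3/2 + \tau < 2 < 9/4 < \alpha$, so Lemma~\ref{lem_complete_cone_1} yields $\|W\|_{H^{\tau,p_1}} < \infty$. For the second term I would pick $q_2 = 4$, $p_2 = 2$, and note that $W \in L^2(\mathbb R^3)$ follows from Lemma~\ref{lem_complete_cone_1} at $p=2$ (where $\alpha > 3/2$ is automatic from $\alpha>9/4$). Combining the two terms with $\|g\|_{L^{q_1}} \lesssim \|g\|_{H^{\tau,4}}$ yields the multiplier bound.

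The main step requiring care is the fractional Leibniz estimate above on Bessel potential spaces of fractional order $\tau \in (0,1/2)$ over $\mathbb R^3$: it is standard and follows from a Littlewood--Paley decomposition together with Coifman--Meyer/Mihlin type multiplier bounds, provided the chosen exponents $p_j, q_j$ all lie in the open range $(1,\infty)$, which is indeed the case here. The only arithmetic check is that the threshold $\alpha > 9/4$ imposed in the statement simultaneously covers the worse requirement coming from \eqref{eq_itself} (namely $\alpha > 3/(4/3)$) and the requirement $\alpha > 3/p_1 = 3/2 + \tau$ uniformly in $\tau \in (0,1/2)$ needed for \eqref{eq_mult}.
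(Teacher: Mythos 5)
Your argument is correct, and the exponent bookkeeping checks out: $p_1\in(3/2,2)\subset(1,2]$ so Lemma~\ref{lem_complete_cone_1} applies there, $3/p_1=3/2+\tau<2<9/4<\alpha$, and the pair $(p_2,q_2)=(2,4)$ also satisfies the H\"older relation, while \eqref{eq_itself} is exactly Lemma~\ref{lem_complete_cone_1} at $p=4/3$, where $\alpha>9/4$ is sharp for the hypothesis. The overall skeleton (factor out $\varphi$ via the H\"older--Zygmund multiplier property of $C^{\tau+\varepsilon}$ on $H^{\tau,p}$, then work with $W=\langle x\rangle^{-\alpha}\chi_{C_\delta}$) coincides with the paper's, which invokes the results on p.~205 of \cite{triebel92_theor_funct_spaces_ii} for the $\varphi$ factor. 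Where you diverge is the core multiplier bound \eqref{eq_mult}: the paper simply cites Proposition~3.5 of \cite{paeivaerinta17_stric_convex_corner_scatt} with $\tilde p=2$, which says that multiplication by any fixed element of $H^{\tau,2}(\mathbb R^3)$ is bounded $H^{\tau,4}\to H^{\tau,4/3}$, so only the single fact $W\in H^{\tau,2}$ from Lemma~\ref{lem_complete_cone_1} is needed; you instead rederive the bound from the Kato--Ponce fractional Leibniz inequality, which forces you to use Lemma~\ref{lem_complete_cone_1} at two exponents ($p_1$ and $p=2$) and to run the Sobolev embedding $H^{\tau,4}\hookrightarrow L^{q_1}$. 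The paper's route is shorter because the cited proposition was tailor-made for this situation; yours is more self-contained (modulo the standard fractional Leibniz rule on Bessel potential spaces) and makes explicit why $\alpha>9/4$ is the binding constraint, coming from \eqref{eq_itself} rather than from the multiplier estimate, where $\alpha>3/2+\tau$ already suffices.
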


\begin{proof}
  Use the notation
  \[
  \widetilde V := \langle{x}\rangle^{-\alpha}\chi_{C_\delta},
  \]
  so that $V=\varphi \widetilde V$. Since $\widetilde V \in
  H^{\tau,2}(\mathbb R^3)$ by Lemma~\ref{lem_complete_cone_1}, we get
  as a direct consequence of Proposition~3.5 in
  \cite{paeivaerinta17_stric_convex_corner_scatt} that the multiplication operator
  $g \mapsto \widetilde V g$ is continuous $H^{\tau, 4}(\mathbb R^3)
  \to H^{\tau, 4/3}(\mathbb R^3)$, and hence the estimate
  \[
  \| \widetilde V g \|_{H^{\tau, 4/3}(\mathbb R^3)} \leq C \| g
  \|_{H^{\tau, 4}(\mathbb R^3)}
  \]
  follows. In more detail, choose $\tilde p = 2$ in Proposition 3.5 in
  \cite{paeivaerinta17_stric_convex_corner_scatt}, which gives $p = 4$ and the
  H\"older conjugate $p' = \tfrac{4}{3}$.  Furthermore by the results
  on p. 205 in \cite{triebel92_theor_funct_spaces_ii}, we see that multiplication by $\varphi \in
  C^{\tau + \varepsilon}(\mathbb R^3)$ is continuous on
  $H^{\tau,p}(\mathbb R^3)$, so that
  \[
  \| V g \|_{H^{\tau, 4/3}(\mathbb R^3)} = \| \varphi \widetilde V g
  \|_{H^{\tau, 4/3}(\mathbb R^3)} \leq C \| \widetilde V g
  \|_{H^{\tau, 4/3}(\mathbb R^3)} \leq C \| g \|_{H^{\tau, 4}(\mathbb
    R^3)}.
  \]
  Lemma~\ref{lem_complete_cone_1} with $p=4/3$ and this same result in
  \cite{triebel92_theor_funct_spaces_ii} shows \eqref{eq_itself} immediately.
\end{proof}

The previous proposition shows that potentials of the form $\varphi
\langle x \rangle^{-\alpha} \chi_{C_\delta}$ give continuous
multiplier operators in the sense of \eqref{eq_mult}. The uniform
Sobolev estimates of \cite{kenig87_unifor_sobol_inequal_unique_contin} and multiplier property
\eqref{eq_mult} can be used to deduce the following existence result
for a inhomogeneous equation. Then, the norm bound \eqref{eq_itself}
implies the existence of complex geometrical optics solutions for the
homogeneous partial differential equation $(-\Delta+V-\lambda)\psi =
0$. This is a standard argument, done for potentials supported on a
circular cone in Proposition~3.3 of
\cite{paeivaerinta17_stric_convex_corner_scatt}. We nevertheless repeat parts of
the proof here for the convenience of the reader.

\begin{proposition}  \label{prop_solv}
  Let $\tau \in \mathbb R$. And suppose $V\in \mathcal{D}'(\mathbb
  R^3)$ is such that
  \[
  \| V g \|_{H^{\tau, 4/3}(\mathbb R^3)} \leq C \| g \|_{H^{\tau,
      4}(\mathbb R^3)}
  \]
  for all $g\in H^{\tau,4}(\mathbb R^3)$. Then there exist constants
  $C > 0$ and $R > 0$ such that whenever $\zeta \in \mathbb C^n$
  satisfies $|\Re\zeta| \geq R$ and $f \in H^{\tau,4/3}(\mathbb R^3)$
  the equation
  \begin{align}\label{eq_solv}
    (-\Delta + 2 \zeta \cdot D + V) \psi = f
  \end{align}
  has a solution $\psi \in H^{\tau,4}(\mathbb R^3)$ satisfying
  \[
  \|\psi\|_{H^{\tau,4}(\mathbb R^3)} \leq C |\Re
  \zeta|^{-1/2}\|f\|_{H^{\tau,4/3}(\mathbb R^3)}.
  \]
  
\end{proposition}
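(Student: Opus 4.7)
The plan is to follow the standard CGO construction by first inverting the constant-coefficient operator $-\Delta + 2\zeta\cdot D$ and then treating the potential $V$ perturbatively via a Neumann series, exactly as in Proposition~3.3 of \cite{paeivaerinta17_stric_convex_corner_scatt}. The core analytic input is the uniform Sobolev estimate of Kenig--Ruiz--Sogge \cite{kenig87_unifor_sobol_inequal_unique_contin}, which in $\mathbb R^3$ gives a solution operator $G_\zeta$ to $(-\Delta + 2\zeta\cdot D)u = f$ satisfying
\[
\|G_\zeta f\|_{L^4(\mathbb R^3)} \leq C\,|\Re\zeta|^{-1/2}\,\|f\|_{L^{4/3}(\mathbb R^3)}
\]
whenever $|\Re\zeta|$ is large enough.

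The first step is to upgrade this $L^{4/3}\to L^4$ bound to the Bessel potential spaces. Since $G_\zeta$ is itself a Fourier multiplier (inverse of the symbol $|\xi|^2 + 2i\zeta\cdot\xi$, up to sign conventions), it commutes with the Bessel potential $(1-\Delta)^{\tau/2}$. Applying the Kenig--Ruiz--Sogge estimate to $(1-\Delta)^{\tau/2} G_\zeta f = G_\zeta (1-\Delta)^{\tau/2} f$ therefore yields
\[
\|G_\zeta f\|_{H^{\tau,4}(\mathbb R^3)} \leq C\,|\Re\zeta|^{-1/2}\,\|f\|_{H^{\tau,4/3}(\mathbb R^3)}
\]
with the same constant $C$ and the same threshold $R$, independent of $\tau$.

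The second step is to rewrite the equation $(-\Delta + 2\zeta\cdot D + V)\psi = f$ in the equivalent fixed-point form $(I + G_\zeta V)\psi = G_\zeta f$. Combining the multiplier hypothesis on $V$ with the estimate just derived gives
\[
\|G_\zeta V g\|_{H^{\tau,4}(\mathbb R^3)} \leq C\,|\Re\zeta|^{-1/2}\,\|V g\|_{H^{\tau,4/3}(\mathbb R^3)} \leq C'\,|\Re\zeta|^{-1/2}\,\|g\|_{H^{\tau,4}(\mathbb R^3)}.
\]
Enlarging $R$ if necessary so that $C'|\Re\zeta|^{-1/2} \leq 1/2$ whenever $|\Re\zeta|\geq R$, the operator $G_\zeta V$ becomes a contraction on $H^{\tau,4}(\mathbb R^3)$, and $(I + G_\zeta V)^{-1}$ exists as a bounded operator with norm at most $2$ via the Neumann series. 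The required solution is then $\psi = (I+G_\zeta V)^{-1} G_\zeta f$, and chaining the two estimates gives the claimed bound $\|\psi\|_{H^{\tau,4}} \leq 2C\,|\Re\zeta|^{-1/2}\,\|f\|_{H^{\tau,4/3}}$.

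The only genuinely non-routine point is verifying that the Kenig--Ruiz--Sogge $L^p$ resolvent estimate transfers to $H^{\tau,p}$ with a constant uniform in $\zeta$; this is really just the commutation of Fourier multipliers noted above, so I do not anticipate any essential obstacle once the right framework is in place. Everything else is the standard Neumann-series argument, which is why the proof in \cite{paeivaerinta17_stric_convex_corner_scatt} proceeds identically and can simply be quoted for the details.
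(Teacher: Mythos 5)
Your proposal is correct and follows essentially the same route as the paper: quote the Kenig--Ruiz--Sogge uniform Sobolev estimate (lifted to $H^{\tau,p}$ by commuting the Fourier multiplier $G_\zeta$ with Bessel potentials, which is exactly the content of the first part of Proposition~3.3 in P\"aiv\"arinta--Salo--Vesalainen that the paper cites), then run a Neumann series using the multiplier hypothesis on $V$. The only cosmetic difference is that you invert $I+G_\zeta V$ on $H^{\tau,4}$ and set $\psi=(I+G_\zeta V)^{-1}G_\zeta f$, while the paper inverts $I+VG_\zeta$ on $H^{\tau,4/3}$ and sets $\psi=G_\zeta v$ with $v=(I+VG_\zeta)^{-1}f$; these are equivalent and yield the same bound.
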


\begin{proof} 
  Here we only repeat the case involving a nonzero $V$.  The first
  part of the proof of Proposition~3.3 in
  \cite{paeivaerinta17_stric_convex_corner_scatt}, gives an solution operator
  $G_\zeta$ for the case $V=0$, for which
  \[
  G_\zeta \colon H^{\tau,4/3}(\mathbb R^3) \to H^{\tau,4}(\mathbb
  R^3),
  \]
  and $(-\Delta + 2 \zeta \cdot D )G_\zeta g = g$ for any $g\in
  H^{\tau,4/3}(\mathbb R^3)$. Furthermore $G_\zeta$ obeys the norm
  estimate
  \[
  \|G_\zeta g \|_{H^{\tau,4}(\mathbb R^3)} \leq C |\Re \zeta|^{-1/2}\| g
  \|_{H^{\tau,4/3}(\mathbb R^3)}
  \]
  for all $g$ in that space. Proposition~\ref{prop_mult} implies that
  \[
  \|VG_\zeta g \|_{H^{\tau,4/3}(\mathbb R^3)} \leq C |\Re \zeta|^{-1/2}\|
  g \|_{H^{\tau,4/3}(\mathbb R^3)}
  \]
  again for all $g$ as before. We can now choose an $R>0$, so that
  $CR^{-1/2} = \tfrac{1}{2}$, and then assuming $|\Re(\zeta)| \geq R$
  we have
  \begin{align} \label{eq_potest}
    \|VG_\zeta g \|_{H^{\tau,4/3}(\mathbb R^3)} \leq \tfrac{1}{2} \| g
    \|_{H^{\tau,4/3}(\mathbb R^3)}
  \end{align}
  for any $g$. Note that a solution to the equation $(-\Delta + 2
  \zeta \cdot D + V) G_\zeta v = f$ is given by any solution to $v + V
  G_\zeta v = f$. If we can solve $v$ satisfying
  \[
  (I + V G_\zeta)v = f,
  \]
  then we get a solution $\psi$ to \eqref{eq_solv} by setting $\psi =
  G_\zeta v$. Estimate \eqref{eq_potest} shows that $(I +
  VG_\zeta)^{-1}$ can be expressed as a Neumann series converging in
  $H^{\tau,4/3}(\mathbb R^3)$. By using the norm estimate for
  $G_\zeta$ and \eqref{eq_potest} we have moreover that
  \[
  \| \psi \|_{H^{\tau,4}(\mathbb R^3)} \leq C |\Re \zeta|^{-1/2}\| v
  \|_{H^{\tau,4/3}(\mathbb R^3)} \leq C |\Re \zeta|^{-1/2}\| f
  \|_{H^{\tau,4/3}(\mathbb R^3)}.
  \]
\end{proof}

\noindent The solvability result gives us then finally the existence
of suitable CGO solutions.

\begin{theorem}\label{cgo-solutions}
  Let $\lambda\in\mathbb R_+$, let $\rho\in\mathbb C^3$ satisfy
  $\rho\cdot\rho=-\lambda$, and assume that $\left|\Im\rho\right|$ is
  large enough. Define
  \[V(x)=\varphi(x)\left\langle x\right\rangle^{-\alpha}\chi_C(x)\]
  for $x\in\mathbb R^3$, where $C\subset\mathbb R^3$ is a closed cone
  with a bounded cross-section $D\subset\mathbb R^2$ satisfying
  $\chi_D\in H^\tau(\mathbb R^2)$,
  $\alpha\in (9/4,\infty )$, and $\varphi\in
  C^{\tau+\varepsilon}(\mathbb R^3)$ for some
  $\tau\in (1/4,1/2 )$ and $\varepsilon\in\mathbb R_+$.

  Then there exists a solution $u=e^{-\rho\cdot x}\left(1+\psi\right)$
  to the equation
  \[
  \left(-\Delta+V-\lambda\right)u=0
  \]
  in $\mathbb R^3$, where $\psi\in L^q(\mathbb R^3)$, $q=12/(3-4\tau)
  \in (6,12)$, and
  \[
  \bigl\|\psi\bigr\|_{L^q(\mathbb R^3)} \lesssim
  \left|\Im\rho\right|^{-3/q-\delta}
  \]
  for some fixed $\delta\in\mathbb R_+$.
\end{theorem}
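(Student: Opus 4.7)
The plan is the standard Faddeev/CGO substitution followed by a contraction-mapping solvability argument that is already set up by Propositions~\ref{prop_mult} and~\ref{prop_solv}. I would start by inserting the ansatz $u = e^{-\rho\cdot x}(1+\psi)$ into the equation $(-\Delta + V - \lambda)u = 0$. Using the identity $\Delta(e^{-\rho\cdot x}f) = e^{-\rho\cdot x}\bigl(\Delta f - 2\rho\cdot\nabla f + (\rho\cdot\rho)f\bigr)$ together with $\rho\cdot\rho = -\lambda$, the exponential factor cancels and, after separating the known source, the equation reduces to
\[
-\Delta\psi + 2\rho\cdot\nabla\psi + V\psi = -V.
\]
Writing $2\rho\cdot\nabla = 2\zeta\cdot D$ with $\zeta = i\rho$ and $D = -i\nabla$ puts this in the exact form $(-\Delta + 2\zeta\cdot D + V)\psi = f$ required by Proposition~\ref{prop_solv}, with $f = -V$ and $|\Re\zeta| = |\Im\rho|$.

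Next I would check the hypotheses of Proposition~\ref{prop_solv}. The potential $V = \varphi\langle x\rangle^{-\alpha}\chi_C$ falls directly under Proposition~\ref{prop_mult}: the cone $C$ fits the framework of Lemma~\ref{lem_complete_cone_1} since its cross-section satisfies $\chi_D\in H^\tau(\mathbb R^2)$ with $\tau\in(1/4,1/2)$, we have $\alpha > 9/4$, and $\varphi\in C^{\tau+\varepsilon}(\mathbb R^3)$. Consequently $V$ defines a continuous multiplier $H^{\tau,4}(\mathbb R^3)\to H^{\tau,4/3}(\mathbb R^3)$ and $V$ itself lies in $H^{\tau,4/3}(\mathbb R^3)$. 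Taking $|\Im\rho|$ larger than the threshold $R$ of Proposition~\ref{prop_solv}, the equation then has a solution $\psi\in H^{\tau,4}(\mathbb R^3)$ with
\[
\|\psi\|_{H^{\tau,4}(\mathbb R^3)} \lesssim |\Im\rho|^{-1/2}\,\|V\|_{H^{\tau,4/3}(\mathbb R^3)} \lesssim |\Im\rho|^{-1/2}.
\]

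Finally I would translate this bound into the advertised $L^q$ estimate. The fractional Sobolev embedding $H^{\tau,4}(\mathbb R^3)\hookrightarrow L^q(\mathbb R^3)$ with $\tfrac1q = \tfrac14 - \tfrac{\tau}{3}$, equivalently $q = 12/(3-4\tau) \in (6,12)$, yields
\[
\|\psi\|_{L^q(\mathbb R^3)} \lesssim |\Im\rho|^{-1/2}.
\]
A short arithmetic check shows $\tfrac12 = \tfrac{3}{q} + \bigl(\tau - \tfrac14\bigr)$, so the choice $\delta = \tau - \tfrac14 > 0$ reproduces precisely the target decay $|\Im\rho|^{-3/q-\delta}$. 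I do not expect a substantive obstacle here: the genuinely hard work has been carried out earlier, in the Sobolev-norm analysis of the cone characteristic function (Lemma~\ref{lem_complete_cone_1}) and in the multiplier/solvability estimates (Propositions~\ref{prop_mult} and~\ref{prop_solv}). The only delicate point is that the hypothesis $\tau > 1/4$ is strict; it is exactly what ensures both a positive $\delta$ and $q>6$, and any relaxation to $\tau = 1/4$ would collapse the gain in $|\Im\rho|$.
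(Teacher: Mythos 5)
Your proposal is correct and follows essentially the same route as the paper: the same substitution $\zeta=i\rho$ reducing to $(-\Delta+2\zeta\cdot D+V)\psi=-V$, solvability via Propositions~\ref{prop_mult} and~\ref{prop_solv}, and the Sobolev embedding $H^{\tau,4}(\mathbb R^3)\hookrightarrow L^{q}(\mathbb R^3)$ with $q=12/(3-4\tau)$. Your explicit $\delta=\tau-\tfrac14$ coincides with the paper's $\delta=(q-6)/(2q)$, so the decay rate matches exactly.
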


\begin{proof}
  To simplify notation, let $\zeta = i\rho$. It is then required to
  construct a solution of the form
  \[
  u = e^{i \zeta \cdot x} (1 + \psi), \quad \zeta \in \mathbb C^3,
  \quad \zeta\cdot \zeta = \lambda.
  \]
  We thus need to solve the equation
  \[
  (-\Delta + 2 \zeta \cdot D + V) \psi = -V.
  \]
  The potential $V$ satisfies the two conditions
  \[
  \| V g \|_{H^{\tau, 4/3}(\mathbb R^3)} \leq C \| g \|_{H^{\tau,
      4}(\mathbb R^3)}, \qquad \| V \|_{H^{\tau,4/3}(\mathbb R^3)} <
  \infty
  \]
  by Proposition~\ref{prop_mult}. The former is true for any $g\in
  H^{\tau,4}(\mathbb R^3)$ and $C$ is independent of $g$.
  Proposition~\ref{prop_solv} gives us then a solution $\psi\in
  H^{\tau,4}(\mathbb R^3)$ satisfying the estimate
  \begin{align} \label{eq_krs}
    \|\psi\|_{H^{\tau,4}(\mathbb R^3)} \leq C |\Re \zeta|^{-1/2}\| V
    \|_{H^{\tau,4/3}(\mathbb R^3)}.
  \end{align}
  By the Sobolev embedding Theorem we have that
  \[
  \|\psi\|_{L^{q_\tau}(\mathbb R^3)} \leq C
  \|\psi\|_{H^{\tau,4}(\mathbb R^3)},
  \]
  when $0 < \tau < n/p$ and $q_\tau = np/(n-\tau p)$ where $n=3$ is
  the dimension and $p=4$ is the Sobolev integrability. We have
  \[
  q_\tau = \tfrac{12}{3-4\tau} \in (6, 12).
  \]
  since $\tau \in (1/4,1/2)$ by assumption. Now we write
  \[
  \tfrac{1}{2} = \tfrac{3}{q_\tau} + \tfrac{q_\tau-6}{2q_\tau} =:
  \tfrac{3}{q_\tau} + \delta
  \]
  and for the estimate of $\psi$ in the claim we want to have $\delta
  > 0$, whichis equivalent to having $q_\tau > 6$. This holds because
  $\tau > 1/4$. Combining the above with \eqref{eq_krs} implies that
  \[
  \|\psi\|_{L^{q_\tau}(\mathbb R^3)} \leq C |\Re \zeta|^{(-3/q_\tau -
    \delta)}\| V \|_{H^{\tau,4/3}(\mathbb R^3)},
  \]
  where $q_\tau \in (6,12)$.
\end{proof}

We end this section by considering a concrete examples of the type of
potential $V$ for which the above results hold.

\begin{example}
  \begin{figure}
    \includegraphics{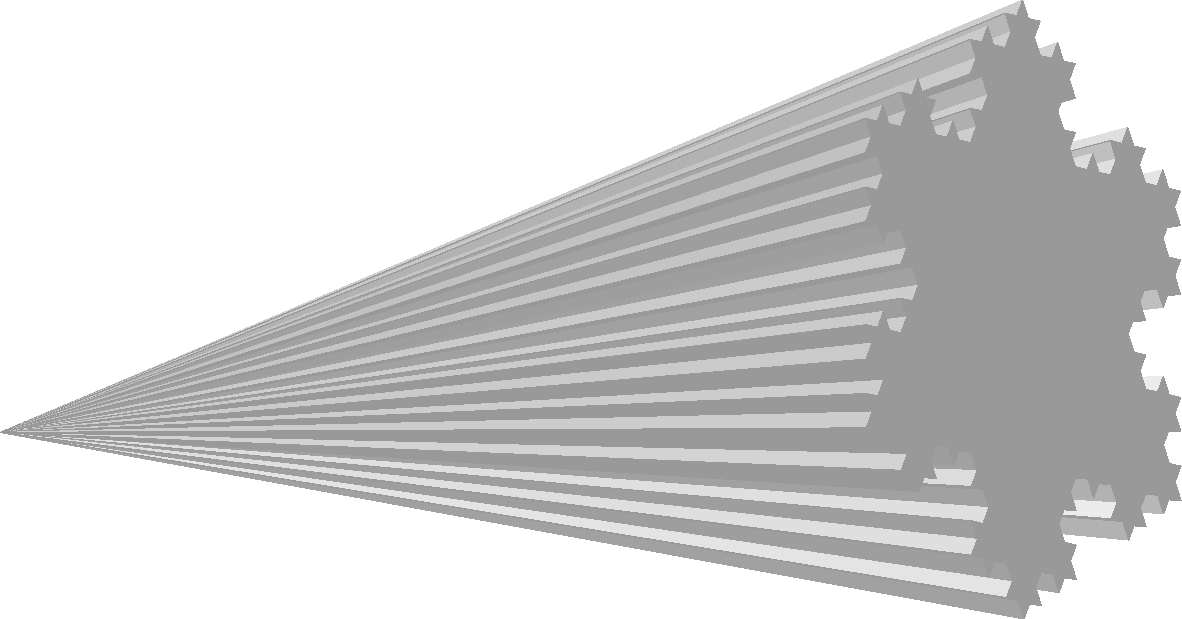}
    \caption{A cone with Koch snowflake cross section}
    \label{figure_Koch}
  \end{figure}
  Consider the following conic set
  \[
  C = \{ (tx',t) \in \mathbb R^3 \;\colon\; x' \in K, \; t \in [0,
  \infty) \},
  \]
  where $K \subset \mathbb R^2$ is the interior of the famous Koch
  snowflake, see Figure~\ref{figure_Koch}. By Corollary 1.2 in
  \cite{faraco12_sobol_norm_charac_funct_with} we know that
  \[
  \tau < 1 - \tfrac{1}{2}\tfrac{\log 4}{\log 3} \quad \Rightarrow
  \quad \chi_K \in H^{\tau,2}(\mathbb R^2).
  \]
  We can now choose $\tau \in (\tfrac{1}{4},\tfrac{1}{2})$, such that
  $\tfrac{1}{4}< \tau < 1 - \tfrac{1}{2}\tfrac{\log 4}{\log 3} \approx
  0.37$. Let $\alpha>9/4$ and
  \[
  V(x) = \langle x \rangle^{-\alpha} \chi_C.
  \]
  Theorem~\ref{cgo-solutions} implies then that we can construct CGO
  solutions $V$ with the desired remainder estimates.
\end{example}

\medskip
The next example is a cone with a porous structure. We will use some
further results from \cite{faraco12_sobol_norm_charac_funct_with} to analyze this. We use in
particular the fact that we can determine what Sobolev space the
characteristic function of a set is in by looking at the box-counting
dimension of the boundary of the set.

\begin{example}
  Let $A$ be the Apollonian gasket. See Figure~\ref{figure_apollo}.
  The set $A$ can be constructed iteratively by starting with the four
  largest circles. One then adds all the circles that are mutually
  tangent to any three of the initial four circles. In this way, one
  obtains four new circles, or eight circles in total. One then adds
  the circles that are mutually tangent to the eight circles
  similarly, and so forth.

  \begin{figure}
    \begin{center}
      \includegraphics[width=10cm]{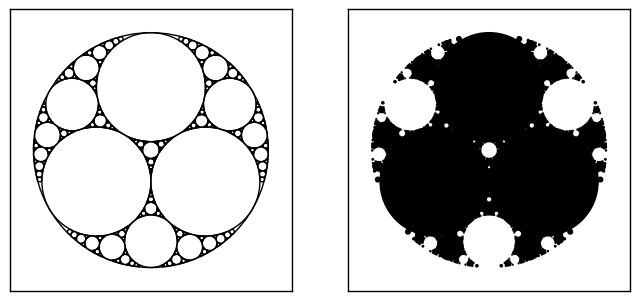}
    \end{center}
    \caption{The Apollonian gasket $A$ on the left and the set $\tilde
      A$ on the right.}
    \label{figure_apollo}
  \end{figure}

  Now consider the set $\tilde A \subset \mathbb R^2$, which is
  constructed as the Apolloninan gasket, but so that $\tilde A$
  contains the entire interior disk of the circles, but only of those
  added on every other iteration in the construction of $A$. See
  Figure~\ref{figure_apollo}.

  Now we consider the conic set
  \[
  C = \{ (tx',t) \in \mathbb R^3 \;\colon\; x' \in \tilde A, \; t \in
  [0, \infty) \},
  \]
  The set $\tilde A$ is a porous set with non zero Lebesgue measure,
  which can be analyzed in a straightforward manner.  Clearly we have
  that
  \[
  \partial \tilde A \subset \partial A = A.
  \]
  The exact Hausdorff dimension $\dim_H(A)$ of the Apolloninan Gasket
  is unknown, but $ \dim_H(A) < 1.314534, $ see
  e.g. \cite{boyd73_resid_set_dimen_apoll_packin}. The Apollonian gasket has furthermore the
  property that the so called box-counting dimension (or Minkowsksi
  dimension) of its closure coincides with the Hausdorff dimension,
  i.e. $\dim_M(\overline{A}) = \dim_H(\overline{A})$, see
  \cite{stratmann96_box_count_dimen_geomet_finit_klein_group}.  Thus we have that
  \[
  \dim_M (\overline{A}) = \dim_H(\overline{A}) < 1.314534.
  \]
  By the monotonicity of the box-counting dimension (see p.48 in
  \cite{falconer03_fract}) we obtain that $\dim_M (\partial \tilde A) \leq
  \dim_M(\overline{A}) \leq 1.32$.  We can now apply the results in
  \cite{faraco12_sobol_norm_charac_funct_with} (see also \cite{sickel99_point_multip_lizor_trieb_spaces}).  Let $D \subset
  \mathbb R^n$, then in general we have that
  \begin{equation} \label{eq_dimcond}
    \dim_M(\partial D) < n - p\tau \quad \Rightarrow \quad \chi_D \in
    H^{\tau,p}(\mathbb R^n),
  \end{equation}
  for $1\leq p<\infty$. See Theorem 1.3 in \cite{faraco12_sobol_norm_charac_funct_with} and
  the related comments. For $\partial \tilde A$ and $p=n=2$, the
  condition becomes
  \[
  \dim_M(\partial\tilde A) < 2 - 2\tau,
  \]
  which follows if $\tau < 1 - 0.5 \cdot 1.32 = 0.34$. Again, we see
  that Theorem~\ref{cgo-solutions} implies that we can construct CGO
  solutions for the potential $V = \langle x \rangle^{-\alpha} \chi_C$
  with the desirable remainder estimates if $\alpha>9/4$, since we can
  choose $\tau$ so that $1/4 < \tau < 0.34$.
\end{example}

\begin{remark}
  Condition \eqref{eq_dimcond} has a partial converse that states that
  for sets $D \subset \mathbb R^n$, we have that
  \begin{align*}
    \dim_M(\partial D) > n - p\tau \quad \Rightarrow \quad \chi_D
    \notin H^{\tau,p}(\mathbb R^n),
  \end{align*}
  for $1\leq p<\infty$. See \cite{faraco12_sobol_norm_charac_funct_with}. This sets some
  limits on the applicability of the above constructions. It follows
  that we cannot obtain CGO-solutions with the above method for
  potentials supported on cones whose cross-section $D \subset \mathbb
  R^2$ has a fractal boundary with $\dim_M(\partial D)$ large enough,
  since the CGO construction demands that $\tau>1/4$.
\end{remark}

\section{Admissible cones always scatter}\label{sec_admissible}

\noindent
In this section we define the notion of an admissible medium cone. Admissibility amounts
essentially to an orthogonality condition and certain regularity assumptions.
We then 
prove that admissible medium cones result in potentials that always scatter. 
After this we formulate a simple determinant
condition for medium cones that are admissible.

\begin{definition}\label{def_admissible-medium-corner}
We call a closed cone $C\subset\mathbb R^3$ an \emph{admissible medium cone}, if
\begin{enumerate}[(i)]
\item $C$ is contained in a strictly convex closed circular cone,
\item $C$ has a connected exterior,
\item $C$ has a bounded cross-section $D\subset\mathbb R^2$ such that
  $\chi_D\in H^\tau(\mathbb R^2)$ for some $\tau\in (1/4,1/2 )$, and
\item for any spherical harmonic $H\not\equiv0$ of arbitrary degree
  $N$, there exists an index $m\in\left\{-N-2,\ldots,N+2\right\}$ so
  that
  \[
  \int_{C\cap \mathbb S^2}Y_{N+2}^m\,H \,dS \neq 0.
  \]
\end{enumerate}
\end{definition}

\begin{remark}
We could equivalently formulate the last condition this way: For any
spherical harmonic $H\not\equiv0$ of arbitrary degree $N$, there
exists a spherical harmonic $Y$ of degree $N+2$ so that
\[
\int_{C\cap \mathbb S^2}Y\,H \,dS  \neq 0.
\]
\end{remark}

\begin{theorem} \label{thm_admScatter}
Let $\lambda\in\mathbb R_+$, and let $V=\varphi\,\chi_C+\Phi$, where
$\varphi\in C^{1/4+\varepsilon}_{  c}(\mathbb R^3)$ for some
$\varepsilon\in\mathbb R_+$, $\Phi\in
e^{-\gamma\left|\cdot\right|}\,L^2(\mathbb R^3)$ for some $\gamma\in\mathbb
R_+$ so that $\Phi|_H\equiv0$ for some open half-space $H\subset\mathbb R^3$,
and so that the origin belongs to the component of $\mathbb
R^3\setminus\mathrm{supp}\,\Phi$ containing $H$, and where $C$ is an admissible
medium cone with vertex at the origin. Finally, assume that $\lambda$ is a
non-scattering energy for the potential $V$. Then $\varphi(0)=0$.
\end{theorem}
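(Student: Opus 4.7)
The plan is to adapt the proof of Theorem \ref{source-theorem} with two modifications: the bare exponential $e^{-\rho\cdot x}$ is replaced by a CGO solution $u=e^{-\rho\cdot x}(1+\psi)$ from Theorem \ref{cgo-solutions}, and the constant source value $\varphi_0$ there is replaced by the product of $\varphi(0)$ with the leading Taylor coefficient at the origin of the Herglotz wave $w$ produced by the non-scattering energy.

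By Lemma \ref{lem_non_scat_alt} the hypothesis supplies $v,w\in B^\ast_2(\mathbb R^3)$ with $(-\Delta+V-\lambda)v=0$, $(-\Delta-\lambda)w=0$, $v-w\in\mathring B^\ast_2(\mathbb R^3)$, $w\not\equiv0$. After rotating so that $C\setminus\{0\}\subset\mathbb R^2\times\mathbb R_+$ and so that $H\cap C=\emptyset$, the product $Vv$ vanishes on $H$ and lies in $e^{-\gamma'\left|\cdot\right|}L^2(\mathbb R^3)$ for some $\gamma'>0$; the Rellich-type Theorem \ref{rellich-type-theorem} applied to $(-\Delta-\lambda)(v-w)=-Vv$ then gives $(v-w)|_H\equiv0$, and unique continuation inside the connected exterior of $C$ (Definition \ref{def_admissible-medium-corner}(ii)) propagates this to $v-w\equiv0$ on $B(0,2r)\setminus C$ for any $r>0$ small enough that $B(0,2r)\cap\mathrm{supp}\,\Phi=\emptyset$.

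Next, pick a cut-off $\chi\in C_{\mathrm c}^\infty(B(0,2r))$ with $\chi\equiv1$ on $B(0,r)$ and the CGO solution $u$ from Theorem \ref{cgo-solutions} with $\rho=\rho(\tau,\psi_0)=\tau(0,0,1)+i\sqrt{\tau^2+\lambda}(\cos\psi_0,\sin\psi_0,0)$. Two integrations by parts applied to $\int\chi(v-w)(-\Delta-\lambda)u\,dx$, using the PDEs satisfied by $u,v,w$ and the fact that $\chi\Phi\equiv0$, yield the identity
\[
\int_C\chi\,\varphi\,w\,u\,dx=\int_{A\cap C}\bigl(2\nabla\chi\cdot\nabla(v-w)+(v-w)\Delta\chi\bigr)u\,dx,\qquad A=B(0,2r)\setminus\overline{B(0,r)}.
\]
On $A\cap C$ strict convexity of the circular cone enclosing $C$ gives $|e^{-\rho\cdot x}|\lesssim e^{-\delta\tau}$ for some $\delta>0$; combined with the $L^q$-bound on $\psi$ from Theorem \ref{cgo-solutions}, this forces the right-hand side to be $o(\tau^{-k})$ for every $k$. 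For the left-hand side, expand $w(x)=r^{N_0}H(\hat x)+O(|x|^{N_0+1})$, where $H\not\equiv0$ is a spherical harmonic of the smallest degree $N_0$ appearing in the Taylor series of $w$; then write $\varphi(x)w(x)=\varphi(0)\,r^{N_0}H(\hat x)+E(x)$ with $|E(x)|\lesssim|x|^{N_0+1/4+\varepsilon}$, estimate the $E$-contribution with Lemma \ref{integral-lemma} using $\beta=N_0+1/4+\varepsilon$, control the $\psi$-correction by H\"older's inequality together with the local bound $|w(x)|\lesssim|x|^{N_0}$ on $\mathrm{supp}\,\chi$ and $\|\psi\|_{L^q}\lesssim|\Im\rho|^{-3/q-\delta}$, and rescale $y=|\rho|x$ to obtain
\[
|\rho|^{N_0+3}\int_C\chi\,\varphi\,w\,u\,dx=\varphi(0)\int_C|y|^{N_0}H(\hat y)\,e^{-\rho/|\rho|\cdot y}\,dy+o(1).
\]

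Dominated convergence as $\rho/|\rho|\to\rho_0/|\rho_0|$ with $\rho_0=\tau((0,0,1)+i(\cos\psi_0,\sin\psi_0,0))$ then gives $\varphi(0)\int_C|y|^{N_0}H(\hat y)\,e^{-\rho_0/|\rho_0|\cdot y}\,dy=0$ for every $\psi_0\in\R$. Passing to spherical coordinates, computing the radial integral, multiplying by $e^{im\psi_0}$, integrating $\psi_0$ over $(0,2\pi)$ and invoking Theorem \ref{thm_Laplace_repr} exactly as in the proof of Theorem \ref{source-theorem} turns this into $\varphi(0)\,c_{N_0,m}\int_{C\cap\mathbb S^2}H\,Y_{N_0+2}^m\,dS=0$ with $c_{N_0,m}\neq0$ for each $m\in\{-N_0-2,\ldots,N_0+2\}$; Definition \ref{def_admissible-medium-corner}(iv) then supplies an $m$ for which the last integral is nonzero, forcing $\varphi(0)=0$. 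The main obstacle is the precise bookkeeping of powers of $\tau$: since $w$ may vanish to order $N_0$ at the origin, extracting the correct rescaling $|\rho|^{-N_0-3}$ for the main term and, crucially, showing that the CGO correction $\psi$ contributes only $o(|\rho|^{-N_0-3})$ both require the pointwise bound $|w(x)|\lesssim|x|^{N_0}$, and this is exactly why the strict condition $\tau>1/4$ in Definition \ref{def_admissible-medium-corner}(iii) is needed: through Theorem \ref{cgo-solutions} it provides the strictly positive $\delta$ in the estimate for $\psi$, just enough for the correction to be absorbed.
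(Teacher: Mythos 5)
Your proposal is correct and follows essentially the same route as the paper's own proof: Lemma~\ref{lem_non_scat_alt}, Rellich plus unique continuation to kill $v-w$ near the vertex outside $C$, the CGO solution of Theorem~\ref{cgo-solutions} paired against the cut-off difference, extraction of the lowest-degree harmonic term of $w$, the $o(\tau^{-N-3})$ bookkeeping via Lemma~\ref{integral-lemma} and the $\delta>0$ gain from $\tau>1/4$, then rescaling, dominated convergence, and Laplace's representation to reach condition (iv) of Definition~\ref{def_admissible-medium-corner}. The only deviations are immaterial (a sign on the commutator term, the absence of the paper's $\sqrt2$ normalization, and glossing the standard facts that the leading Taylor polynomial of $w$ is harmonic and that $\Re(\omega\cdot y)>0$ on $C\cap\mathbb S^2$ justifies the radial integral and domination).
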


\begin{proof}
% By the definition of non-scattering energies, there exist solutions $v,w\in B_2^\ast(\mathbb R^3)$ to the equations
By Lemma~\ref{lem_non_scat_alt}, there exist solutions $v,w\in B_2^\ast(\mathbb R^3)$ to the equations
\[
\begin{cases}
\left(-\Delta+V-\lambda\right)v=0,\\
\left(-\Delta-\lambda\right)w=0,\end{cases}
\]
in $\mathbb R^3$, so that $u=v-w\in\mathring B^\ast_2(\mathbb R^3)$ and
$w\not\equiv0$. Let $r\in\mathbb R_+$ be so small that the ball $B(0,2r)$ is
contained in the component of $\mathbb R^3\setminus\mathrm{supp}\,\Phi$
containing $H$. By Theorem~\ref{rellich-type-theorem}, unique continuation and
the connectedness of the exterior of $C$ from Definition~\ref{def_admissible-medium-corner}, we have $u|_{B(0,2r)\setminus C}\equiv0$.

By rotating the whole setup, we may assume that $C\setminus\{0\} \subset
\mathbb R^2 \times \mathbb R_+$. Next, we let
$\tau\in\bigl[\sqrt\lambda,\infty\bigr[$ and $\psi\in\mathbb R$ in order to
choose a complex vector $\rho\in\mathbb C^3$ through
\[
\rho=\rho(\tau,\psi)=\tau (0,0,1)
+i\sqrt{\tau^2+\lambda} \,(\cos\psi, \sin\psi, 0).
\]
In particular, we have $\rho\cdot\rho=-\lambda$, and, assuming that $\tau$ is
large enough, Theorem~\ref{cgo-solutions} gives us a solution
$u_0=e^{-\rho\cdot x}\left(1+\psi\right)$ to the equation
\[
\left(-\Delta+\varphi\,\chi_C-\lambda\right)u_0=0
\]
in $\mathbb R^3$ satisfying $\psi\in L^q(\mathbb R^3)$ for some $q\in (6,12)$, and satisfying the estimate
\[
\|\psi\|_{L^q(\mathbb R^3)} \lesssim \tau^{-3/q-\delta},
\]
where $\delta\in\mathbb R_+$ is fixed. We shall study the limit
$\tau \to \infty$, and so all the implicit constants will be
independent of $\tau$, but they are allowed to depend on everything else. 

Since $w$ is real-analytic and $w\not\equiv0$, there exists a homogeneous
complex polynomial $H(x)$ of some degree $N\in\left\{0\right\}\cup\mathbb Z_+$
such that $H(x)\not\equiv0$ and
\[
w(x)=H(x)+O(\left|x\right|^{N+1}),
\]
for all $x\in B(0,2r)$. The equation $\left(-\Delta-\lambda\right)w=0$ implies
that $H$ is harmonic. This can be seen by applying the differential operator to
the Taylor expansion of $w$. The argument is essentially the same as in
Lemma~17 of \cite{blaasten20_non_scatt_energ_trans_eigen_h}. 

We choose a cut-off function $\chi\in C_{\mathrm c}^\infty(\mathbb R^3)$ so
that $\chi|_{B(0,r)}\equiv1$ and $\chi|_{\mathbb R^3\setminus B(0,2r)}\equiv0$,
and write $A$ for the annular domain $B(0,2r)\setminus\overline B(0,r)$. We may
then argue, remembering that $V$ and $\varphi\,\chi_C$ coincide in the support
of $\chi$, that
\begin{align*}
0&=-\int_{\mathbb R^3}\chi u\left(-\Delta+\varphi\,\chi_C-\lambda\right)u_0 \,dx
=-\int_{\mathbb R^3}u_0\left(-\Delta+V-\lambda\right)\left(\chi u\right)\,dx\\
&=\int_Cu_0\,\chi\varphi w\,dx
+\int_{C\cap A}u_0\left(2\,\nabla\chi\cdot\nabla u+u\,\Delta\chi\right)\,dx.
\end{align*}
Since we may estimate $\left|e^{-\rho\cdot x}\right|\lesssim e^{-\delta'\tau}$ in
the domain $A$ for some constant $\delta'\in\mathbb R_+$ not depending on
$\tau$, we have
\[
\int_{C\cap A}u_0\left(2\,\nabla\chi\cdot\nabla u+u\,\Delta\chi\right)\,dx
\lesssim e^{-\delta'\tau}=o(\tau^{-N-3}).
\]
Now, we may expand the integrand $u_0\,\chi\varphi w$ in the remaining integral in steps to get
\begin{align*}
o(\tau^{-N-3})
=\int_Cu_0\,\chi\varphi w \,dx
&=\int_Ce^{-\rho\cdot x}\,\psi\chi\varphi w \,dx
+\int_Ce^{-\rho\cdot x}\,\chi\,\bigl(\varphi-\varphi(0)\bigr)\,w \,dx\\
&\qquad
+\varphi(0)\int_Ce^{-\rho\cdot x}\,\chi\,O(\left|x\right|^{N+1}) \,dx
+\varphi(0)\int_Ce^{-\rho\cdot x}\,\chi H \,dx.
\end{align*}
Using Lemma~\ref{integral-lemma} and the remainder estimate of Theorem~\ref{cgo-solutions} 
for $\psi$, we have for the first of the four integrals on the right-hand side that 
\begin{align*} 
\int_Ce^{-\rho\cdot x}\,\psi\chi\varphi w \,dx  	
\lesssim 
|\rho|^{3/q-N-3}
\big \| e^{-\Re\rho/\left|\rho\right|\cdot x}\left|x\right|^{N}\chi_C \big\|_{L^{q'}(\mathbb R^n)}
\|\chi \varphi \psi \|_{L^q(\mathbb R^n)}
\lesssim 
\tau^{-N-3-\delta}.
\end{align*}
For the second integral we use that H\"older continuity which implies that 
$|\varphi(x) - \varphi(0)| \leq C |x|^{1/4+\epsilon}$ and Lemma~\ref{integral-lemma} with $q=\infty$, so that
have that 
\begin{align*} 
\int_Ce^{-\rho\cdot x}\,\chi\,\bigl(\varphi-\varphi(0)\bigr)\,w \,dx
\lesssim
|\rho|^{-N-3}
\big \| e^{-\Re\rho/\left|\rho\right|\cdot x}\left|x\right|^{N}\chi_C \big\|_{L^{1}(\R^n)}
\|\chi \varphi \psi \|_{L^\infty(\R^n)}
\lesssim 
\tau^{-N-3-1/4-\epsilon}.
\end{align*}
For the third integral we use Lemma~\ref{integral-lemma} similarly and obtain that
\begin{align*} 
\varphi(0)\int_Ce^{-\rho\cdot x}\,\chi\,O(\left|x\right|^{N+1}) \,dx
\lesssim
\tau^{-N-1-3}.
\end{align*}
% Using Lemma~\ref{integral-lemma}, the first three integrals on the right-hand
% side are $\ll\tau^{-N-3-\delta}$, $\ll\tau^{-N-3-\alpha}$ and
% $\ll\tau^{-N-3-1}$, respectively. 
In particular, all three are $o(\tau^{-N-3})$, and we may continue
\[
o(\tau^{-N-3})=\varphi(0)\int_Ce^{-\rho\cdot x}\,\chi H \,dx
=\varphi(0)\int_Ce^{-\rho\cdot x}\,H \,dx
-\varphi(0)\int_Ce^{-\rho\cdot x}\left(1-\chi\right)H \,dx.
\]
The last integral may be estimated to be
\[
\varphi(0)\int_Ce^{-\rho\cdot x}\left(1-\chi\right)H \,dx
\lesssim e^{-\delta'\tau/2}\int_{C\setminus B(0,r)}e^{-\Re\rho\cdot x/2}\,|H|\,dx
\lesssim e^{-\delta'\tau/2}
=o(\tau^{-N-3}).
\]
Now with a change of variables,
\[
o(\tau^{-N-3})=\varphi(0)\int_Ce^{-\rho\cdot x}\,H\,dx
=\frac{\varphi(0)\,2^{(N+3)/2}}{\left|\rho\right|^{N+3}}\int_Ce^{-\sqrt2\rho/\left|\rho\right|\cdot x}\,H\,dx,
\]
or more simply,
\[
\varphi(0)\int_Ce^{-\sqrt2\rho/\left|\rho\right|\cdot x}\,H \,dx = o(1).
\]
In the limit $\tau \to \infty$, we have that
$\rho/\left|\rho\right| \to \rho_0/\left|\rho_0\right|$, where
$$
\rho_0=\rho_0(\tau,\psi)=\tau (0,0,1) +i\tau \,(\cos\psi, \sin\psi, 0).
$$
And Since $e^{-\rho/\left|\rho\right|\cdot x}\lesssim e^{-x_3/\sqrt3}$ in $C$,
the dominated convergence Theorem gives, when taking $\tau \to \infty$, that
\[
\varphi(0)\int_Ce^{-\sqrt2\rho_0/\left|\rho_0\right|\cdot x}\,H \,dx=0.
\]
Writing $\omega$ for the vector $\sqrt2\rho_0/\left|\rho_0\right|=\left\langle
i\cos\psi,i\sin\psi,1\right\rangle$, and remembering that $\omega\cdot y$ has a
positive real part for $y\in C\cap \mathbb S^2$, we may compute through the use of
polar coordinates that
\[
0=\varphi(0)\int_Ce^{-\omega\cdot x}\,H \,dx
=\varphi(0)\int_{C\cap \mathbb S^2}\int_0^\infty e^{-\left(\omega\cdot y\right)r}\,r^{N+2}\, dr\,H(y)\, dy
=\frac{\varphi(0)}{\left(N+2\right)!}\int_{C\cap \mathbb S^2}\frac{H(y)\, dy}{\left(\omega\cdot y\right)^{N+3}}.
\]
Let $m\in\left\{-N-2,-N-1,\ldots,N+2\right\}$. Moving to spherical coordinates,
multiplying the last identity by $e^{im\psi}$, and integrating with respect to
$\psi$ over $\left[0,2\pi\right]$, changing the order of integration and making
the change of variables $\psi'=\psi-\varphi$ gives
\[
0=\varphi(0)\int_{C\cap
\mathbb S^2}e^{im\varphi}\int_0^{2\pi}\frac{e^{im\psi'}\,
d\psi'}{\left(\cos\vartheta+i\sin\vartheta\cos\psi'\right)^{N+3}}\,H\sin\vartheta\,
d\vartheta\, d\varphi,
\]
from which Laplace's second representation theorem, i.e. Theorem~\ref{thm_Laplace_repr}, gives us
\[
0=\varphi(0)\int_{C\cap \mathbb S^2}
e^{im\varphi}\,P_{N+2}(\cos\vartheta)\,H\sin\vartheta\, d\vartheta\, d\varphi
=\varphi(0)\int_{C\cap \mathbb S^2}Y_{N+2}^m\,H \,dS.
\]
Since $C$ was assumed to be an admissible medium cone, the last integral is
non-zero for some choice of $m$, and thus $\varphi(0)=0$.
\end{proof}

\noindent
In the rest of this section we show that the admissibility of a medium cone can be ascertained by  
studying  certain determinants, which we will shortly define.

\medskip
\noindent
We are particularly interested in 
families of cones $\{C_\rho\;:\;\rho \in I\subset \R\}$ depending on the parameter $\rho$.
Consider the $(2N+1)$-tuple $\Psi$ of basis functions of $SH^{N+2}$, such that
$$
\Psi = (\psi_{-N},..,\psi_{N}), \quad \psi_i\neq \psi_j,\quad
\psi_j \in \big \{Y^{k}_{N+2}\;:\; k = -(N+2),..,N+2 \big\}.
$$
Define the functions
$$
I_N^{k,l}(\rho) : = \big(\psi_k, Y_{N}^{l} \big)_{L^2(K_\rho)} =\int_{K_\rho} \psi_k \ov{Y}_{N}^{l} \,dS,
$$
where $K_\rho := C_\rho \cap \mathbb S^2$.

These give us information of the projections of the basis functions $Y^l_N$ on the  basis functions in $\Psi_N$
in the  $L^2(K_\rho)$-space.
Define the matrices
$$
\mathcal{C}_{\Psi,N}(\rho) :=
\begin{pmatrix}
I_N^{-N,-N}(\rho) & \dots & I_N^{-N,N}(\rho)  \\
\vdots & \ddots & \vdots \\
I_N^{N,-N}(\rho) & \dots & I_N^{N,N}(\rho) 
\end{pmatrix} . 
$$
And let the corresponding determinants be
$$
\mathcal{D}_{\Psi,N}(\rho) := \det \mathcal{C}_{\Psi,N}(\rho). 
$$
Note that we are mainly interested in a specific choice of $\Psi$, which is
$$
\Psi_0 := \big( Y^{-N}_{N+2},Y^{-N+1}_{N+2},..,Y^{N}_{N+2} \big),
$$
as this choice of $\Psi$ is particularly useful for analyzing circular cones. 
Moreover we will use the abbreviations
\begin{equation} \label{eq_defCNDN}
\mathcal{C}_{N}(\rho) := \mathcal{C}_{\Psi_0,N}(\rho), \qquad
\mathcal{D}_{N}(\rho) :=  \mathcal{D}_{\Psi_0,N}(\rho).
\end{equation}
The columns of the matrices $\mathcal{C}_{\Psi,N}$ gives the coefficients of the projections
of the basis vectors $Y^k$ to vectors in $\Psi$.
The next lemma gives a simple condition for admissibility in terms of the 
determinants $\mathcal{D}_{\Psi,N}(\rho)$.

\begin{lemma} \label{lem_DetCond}
Suppose that $C_\rho$ satisfies the regularity conditions (i)-(iii)
of Defintion~\ref{def_admissible-medium-corner}. Then
$C_\rho$ is an admissible medium cone if for every $N \in \N$,
there is a $\Psi_N$,\, for which $\mathcal{D}_{\Psi_N,N}(\rho) \neq 0$.
\end{lemma}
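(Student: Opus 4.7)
The plan is to deduce condition (iv) of Definition~\ref{def_admissible-medium-corner} from the determinant hypothesis by a short linear algebra argument. Since conditions (i)--(iii) are already assumed, I only need to verify that for each $N$, no nonzero spherical harmonic $H\in SH^N$ can be orthogonal (against the bilinear pairing $\int_{K_\rho}\,\cdot\,H\,dS$) to all of $Y_{N+2}^{-N-2},\ldots,Y_{N+2}^{N+2}$.

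First, I would proceed by contradiction: fix $N$, and suppose some nonzero $H\in SH^N$ satisfies $\int_{K_\rho} Y_{N+2}^m H\,dS=0$ for every $m\in\{-N-2,\ldots,N+2\}$. In particular this vanishing must hold for each of the $2N+1$ components $\psi_k$ of the tuple $\Psi_N$ provided by the hypothesis. Expanding $H=\sum_{l=-N}^{N} c_l Y_N^l$ with $(c_l)\neq 0$, I would translate the vanishing into a homogeneous linear system in the coefficients $c_l$.

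The key bookkeeping step is to convert the bilinear integral $\int \psi_k Y_N^l\,dS$ into the Hermitian inner product $I_N^{k,l}(\rho)=(\psi_k,Y_N^l)_{L^2(K_\rho)}$ used to build $\mathcal{C}_{\Psi_N,N}(\rho)$. Using the standard identity $\overline{Y_N^l}=(-1)^l Y_N^{-l}$, one gets
\[
\int_{K_\rho}\psi_k\, Y_N^l\,dS = (-1)^l I_N^{k,-l}(\rho).
\]
After reindexing by $j=-l$, the vanishing hypothesis becomes $\mathcal{C}_{\Psi_N,N}(\rho)\,\mathbf{d}=0$, where $\mathbf{d}\in\mathbb{C}^{2N+1}$ is the nonzero vector with entries $d_j=(-1)^j c_{-j}$. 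The assumption $\mathcal{D}_{\Psi_N,N}(\rho)\neq 0$ then forces $\mathbf{d}=0$ and hence all $c_l=0$, contradicting $H\not\equiv 0$. Applying this in every degree $N$ yields condition (iv), completing the proof.

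I do not expect any genuine analytic obstacle here: the argument is purely finite-dimensional linear algebra on $SH^N$. The only point requiring care is the bookkeeping between the \emph{bilinear} integral appearing in Definition~\ref{def_admissible-medium-corner}(iv) and the \emph{sesquilinear} inner product used to define $I_N^{k,l}(\rho)$; once this is handled via the conjugation relation above, the hypothesis of the lemma directly supplies the invertibility needed to rule out a nontrivial $H$.
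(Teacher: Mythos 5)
Your proof is correct and follows essentially the same route as the paper's: assuming a nonzero $H\in SH^N$ annihilates all degree-$(N+2)$ harmonics on $K_\rho$ produces a nonzero vector in the kernel of $\mathcal{C}_{\Psi_N,N}(\rho)$, contradicting $\mathcal{D}_{\Psi_N,N}(\rho)\neq 0$. In fact you are slightly more careful than the paper, which writes the coefficient vector of $H$ directly as the kernel vector, whereas you correctly track the conjugation and sign bookkeeping via $\overline{Y_N^l}=(-1)^l Y_N^{-l}$ and the reindexing $d_j=(-1)^j c_{-j}$.
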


\begin{proof}
Assume that $C_{\rho}$ is not 
an admissible medium cone, so that there exists an $N$ and  $H \in SH^N$, $H\neq0$, s.t.
$$
\int_{K_\rho} Y H \,dS= 0,
$$
for all $Y \in SH^{N+2}$.  We can write $H$, as
$$
H = \sum_{j=-N}^N a_l Y^l_N. 
$$
Then in particular
\begin{align*} 
\int_{K_{\rho}} (a_{-N} Y_N^{-N} + &\dots + a_N Y_N^{N}) \ov{Y}_{N+2}^{-N-2} \,dS = 0, \\
&\vdots &  \\
\int_{K_{\rho}} (a_{-N} Y_N^{-N} + &\dots + a_N Y_N^{N}) \ov{Y}_{N+2}^{N+2} \,dS = 0. 
\end{align*}
So that for any choice of $\Psi$, the vector $\bar \alpha := (\alpha_{-N},..,\alpha_N) \neq 0$ is such that
$$
\mathcal{C}_{\Psi,N}(\rho) \bar\alpha = 0.
$$
This implies that all the matrix 
$\mathcal{C}_{\Psi,N}(\rho)$ is singular, for any choice of $\Psi$.
In particular we have that
$$
\mathcal{D}_{\Psi,N}(\rho) = 0, 
$$
for all $\Psi$ which proves the claim.
\end{proof}

\bigskip
\section{Circular cones} \label{sec_circular}

\noindent
Here we use the properties of spherical harmonics to
prove that circular cones are admissible medium cones, utilizing the  determinant condition
of Lemma~\ref{lem_DetCond}. We also use some results on associated Legendre
polynomials, which we prove later in Section~\ref{sec_assocLegendre}.

In this and the following sections, we use the notation below for
circular cones. Compare with Definition~\ref{def_source_circ}. Recall
also the notational convention mentioned before
\eqref{eq_spherical_coords} that allows us to write
$(\vartheta,\varphi)\in\mathbb S^2$ instead of $x\in\mathbb S^2$, $x =
(\sin\vartheta \cos\varphi, \sin\vartheta\sin\varphi, \cos\vartheta)$.
\begin{definition}\label{def_circCone}
  By a circular cone $C_\rho$ we denote a cone that can be represented
  in a spherical coordinate system as
  \[
  C_\rho := \big\{ (r,\vartheta,\varphi) \in \R^3 \;:\;
  r>0,\,\vartheta \in (0,\rho),\, 0\leq\varphi<2\pi \big\},
  \]
  where $\rho \in (0,\pi/2)$. Also we let $K_\rho := C_\rho \cap
  \mathbb S^2$. Note that a half space is not a spherical cone according to
  this definition.
\end{definition}

\medskip
\noindent
Recall that the elements of the matrix $\mathcal{C}_N(\rho)$ are given by
$$
I_N^{k,l}(\rho) = \int_{K_\rho} Y_{N+2}^{k} \ov{Y}_{N}^{l} \,dS,
$$
and moreover that
$$
\mathcal{C}_{N}(\rho) :=
\begin{pmatrix}
I_N^{-N,-N}(\rho) & \dots & I_N^{-N,N}(\rho)  \\
\vdots & \ddots & \vdots \\
I_N^{N,-N}(\rho) & \dots & I_N^{N,N}(\rho) 
\end{pmatrix},
$$
Furthermore we set
$$
\mathcal{D}_{N}(\rho) = \det \mathcal{C}_{N}(\rho).
$$
The next proposition shows that circular cones are admissible medium cones,
and taken together with Theorem~\ref{thm_admScatter} 
we see that circular cones always scatter.

\begin{proposition} \label{prop_circular}
A circular cone $C_\rho$ is an admissible medium cone. 
\end{proposition}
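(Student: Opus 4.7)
The plan is to apply Lemma~\ref{lem_DetCond} with the canonical choice $\Psi_0 = (Y_{N+2}^{-N},Y_{N+2}^{-N+1},\ldots,Y_{N+2}^N)$ and show that $\mathcal{D}_N(\rho)\neq0$ for every $N\in\mathbb N$ and every $\rho\in(0,\pi/2)$. Conditions (i)--(iii) of Definition~\ref{def_admissible-medium-corner} are immediate for a circular cone with opening angle strictly less than $\pi/2$, so the entire argument reduces to a non-vanishing statement for the determinant $\mathcal{D}_N(\rho)$.

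The first decisive simplification comes from the $\varphi$-dependence of the spherical harmonics. Since $K_\rho=C_\rho\cap\mathbb S^2$ is rotationally symmetric around the $x_3$-axis, the factor $e^{i(k-l)\varphi}$ in the integrand of
\[
I_N^{k,l}(\rho)=\int_0^{2\pi}\int_0^\rho Y_{N+2}^k(\vartheta,\varphi)\,\ov{Y_N^l(\vartheta,\varphi)}\,\sin\vartheta\, d\vartheta\, d\varphi
\]
integrates to $2\pi\,\delta_{kl}$. Consequently $\mathcal{C}_N(\rho)$ is a \emph{diagonal} matrix, and therefore
\[
\mathcal{D}_N(\rho)=\prod_{m=-N}^N I_N^{m,m}(\rho)=\prod_{m=-N}^N 2\pi\,c_{N+2,m}\,c_{N,m}\int_{\cos\rho}^{1}P_{N+2}^m(t)\,P_N^m(t)\, dt,
\]
where $c_{N,m}$ are the nonzero normalization constants appearing in \eqref{YNmDef} and we have substituted $t=\cos\vartheta$. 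Thus proving admissibility is equivalent to showing
\[
J_N^m(\rho):=\int_{\cos\rho}^{1}P_{N+2}^m(t)\,P_N^m(t)\, dt\neq0
\]
for all $N\in\mathbb N$, $m\in\{-N,\ldots,N\}$, and $\rho\in(0,\pi/2)$, i.e.\ for all $t_0:=\cos\rho\in(0,1)$.

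The final step is to establish $J_N^m(\rho)\neq0$. The natural tool is a Christoffel--Darboux type identity: the associated Legendre polynomials $\{P_k^m\}_{k\geq|m|}$ are orthogonal on $[-1,1]$, and the standard three-term recurrence yields a closed form for the indefinite integral of the product $P_{N+2}^m\,P_N^m$ as a boundary expression built from $P_{N+2}^m,P_{N+1}^m,P_N^m$ evaluated at the endpoints. This is precisely the modified Christoffel--Darboux formula advertised in the introduction and derived in Section~\ref{sec_assocLegendre}. After using $P_k^m(1)=0$ for $m\neq0$ (and the explicit values for $m=0$) to evaluate at the upper limit, $J_N^m(\rho)$ reduces to an expression in $P_{N+2}^m(\cos\rho)$, $P_{N+1}^m(\cos\rho)$, $P_N^m(\cos\rho)$. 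I would then appeal to the results of Section~\ref{sec_assocLegendre} which show that this boundary combination does not vanish for $t_0=\cos\rho\in(0,1)$.

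The main obstacle is the last step: proving the non-vanishing of the boundary combination on the interval $\cos\rho\in(0,1)$. A priori the associated Legendre polynomials do have zeros in $(-1,1)$, and a boundary term in a Christoffel--Darboux-type identity could in principle vanish at some interior point, so one has to be careful. I expect the proof in Section~\ref{sec_assocLegendre} exploits either the interlacing of zeros of $P_N^m$ and $P_{N+2}^m$ together with a sign analysis, or direct factoring of the boundary term through an explicit recurrence that keeps the restricted interval $(0,1)$ inside the region where no cancellation can occur; in particular, the hypothesis $\rho<\pi/2$ (so that $t_0>0$) is exactly what rules out the symmetry-induced vanishing that would otherwise occur at $\rho=\pi/2$. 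Given this non-vanishing, every diagonal entry of $\mathcal{C}_N(\rho)$ is nonzero, hence $\mathcal{D}_N(\rho)\neq0$, and Lemma~\ref{lem_DetCond} concludes the proof.
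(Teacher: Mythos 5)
Your reduction is exactly the paper's: conditions (i)--(iii) are immediate, $\mathcal{C}_N(\rho)$ is diagonal because the $\varphi$-integral of $e^{i(k-l)\varphi}$ kills all off-diagonal entries, and everything comes down to showing $\int_{x_0}^{1}P_{N+2}^m(t)\,P_N^m(t)\,dt\neq0$ for $x_0=\cos\rho\in(0,1)$ (after using $P_n^{-m}=c\,P_n^m$ to restrict to $m\geq0$). Up to that point the proposal is correct and follows the paper.

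However, at the decisive step you leave a genuine gap: you only say that the Christoffel--Darboux-type identity reduces the integral to a boundary combination and that you ``expect'' Section~\ref{sec_assocLegendre} to rule out its vanishing, speculating about an interlacing-of-zeros or sign analysis. That is precisely the point where an argument is needed, and the mechanism is not interlacing. Lemmas \ref{lem_integral}, \ref{lem_integral2} and \ref{lem_integral3} give
\[
\int_{x_0}^{1}P_{N+2}^m\,P_N^m\,dx=\frac{1-x_0^2}{4N+6}\,\bigl(P_N^m\,\p_x P_{N+2}^m-\p_x P_N^m\,P_{N+2}^m\bigr)\Big|_{x=x_0},
\]
and the modified Christoffel--Darboux formula (Lemma~\ref{lem_ChristoffelDarboux3}) expresses the Wronskian-type factor as $2a_{N+1}x_0$ times a sum of \emph{squares} $[P^m_{m+2j}(x_0)]^2$ (or $[P^m_{m+1+2j}(x_0)]^2$ in the odd case) with strictly positive coefficients. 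Hence no cancellation can occur: the whole expression is bounded below by a positive multiple of $x_0(1-x_0^2)\,[P_m^m(x_0)]^2$ (respectively $[P^m_{m+1}(x_0)]^2$), which is strictly positive because $P_m^m$ is a nonzero constant times $(1-x^2)^{m/2}$ and $P^m_{m+1}$ a nonzero constant times $x(1-x^2)^{m/2}$, neither of which vanishes on $(0,1)$; the factor $x_0>0$ is exactly where $\rho<\pi/2$ enters, as you correctly suspected. Without this sum-of-squares structure your proposed interlacing/sign analysis would have to be carried out from scratch, and it is not clear it would succeed; so to complete the proof you should quote and use Lemmas \ref{lem_integral}--\ref{lem_integral3} and \ref{lem_ChristoffelDarboux3} explicitly and add the elementary zero-freeness observation for $P^m_m$ and $P^m_{m+1}$ on $(0,1)$.
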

 
\begin{proof}
By Lemma~\ref{lem_DetCond} it is clear that it is enough to show that $\mathcal{D}_{N}(\rho) \neq 0$.
In the case of a circular cone the matrix $\mathcal{C}_{N}(\rho)$ is an diagonal matrix, since
$$
k\neq l \quad \Rightarrow\quad I_N^{k,l}(\rho) = \int_{K_\rho} Y_{N+2}^{k} \ov{Y}_{N}^{l} \,dS = 0.
$$
Thus we have a matrix of the form
\begin{align*} 
\mathcal{C}_{N}(\rho) :=
\begin{pmatrix}
I_N^{-N,-N}(\rho)& \dots & 0  \\
\vdots & \ddots & \vdots \\
0  & \dots & I_N^{N,N}(\rho) 
\end{pmatrix}.
\end{align*}
Assume that $m \in \{-N, \dots N\}$.
The diagonal elements are then of the form 
\begin{align*} 
I_N^{m,m}(\rho)  
&= \int_{K_\rho} Y_{N+2}^{m} \ov{Y}_{N}^{m} \,dS  \\
&= C
\int_{0}^{2\pi} \int_0^\rho 
e^{im\varphi} P^m_{N+2}(\cos \vartheta) e^{-im\varphi} P^m_{N}(\cos \vartheta) \sin\vartheta\,d\vartheta d\varphi \\
&=
2\pi C \int_0^\rho  P^m_{N+2}( \cos \vartheta) P^{m}_{N}( \cos \vartheta) \sin\vartheta\,d\vartheta  \\
&=
2\pi C \int_{x_0}^1  P^m_{N+2}(x) P^{m}_{N}(x) \,dx  \\
&=
2\pi C' \int_{x_0}^1  P^{|m|}_{N+2}(x) P^{|m|}_{N}(x) \,dx,
\end{align*}
where $x_0 = \cos \rho$, and where we used the definition in \eqref{eq_Pmn} to deduce that
$P^{-m}_M = c P^{m}_M$, for some  constants $c$.
We will evaluate the integral on the last line. Clearly we can assume that $m\geq 0$.
By lemmas \ref{lem_integral}, \ref{lem_integral2}, \ref{lem_integral3} and 
\ref{lem_ChristoffelDarboux3}, we know that
\begin{align*} 
\int_{x_0}^1  P^m_{N+2}(x) P^{m}_{N}(x) \,dx
=
(1-x_0^2) 2a_{N+1} x_0 
% \sum_{j=0}^{\lfloor N/2 \rfloor}  c_{N,j} [P^m_{N-2j}(x_0)]^2.
\left\{
\begin{aligned}
& \sum_{j=0}^{(N-m)/2} c_j [P^m_{m+2j}(x_0)]^2, &\text{$N-m$ is even} \\
& \sum_{j=0}^{(N-m-1)/2} \tilde c_j [P^m_{m+1+2j}(x_0)]^2, &\text{$N-m$ is odd.} \\
% & a_{n-2}b_{n-1}b_{n-2}, &\text{ if $j=1$}, \\
\end{aligned}
\right.
\end{align*}
In particular we have that
\begin{align*} 
\int_{x_0}^1  P^m_{N+2}(x) P^{m}_{N}(x) \,dx
\geq
\left\{
\begin{aligned}
&C x_0(1-x^2_0)[P^m_m(x_0)]^2, &\text{ $N-m$ is even} \\
&C x_0(1-x^2_0)[P^m_{m+1}(x_0)]^2, &\text{ $N-m$ is odd}
\end{aligned}
\right.
\;\;> \;0,
\end{align*}
since $P^m_m$ and $P_{m+1}^m$ have no zeros on the interval $x_0 \in (0,1)$, and
since $x_0 \in (0,1)$, because $\rho \in (0,\pi/2)$. 
% when $x_0 \in (0,1)$, which is the case since $\rho \in (0,\pi/2)$, and since $P^m_m$ and $P_{m+1}^m$
% have no zeros on the interval $x_0 \in (0,1)$.
It follows that all the diagonal elements of $\mathcal{C}_{N}(\rho)$ are bounded away from zero, when 
$\rho \in (0,\pi/2)$, and hence
$$
\mathcal{D}_{N}(\rho) \neq 0.
$$
A circular cone is thus an admissible medium cone. 
\end{proof}

\section{A density argument for star-shaped cones}\label{sec_density}

\noindent
In this section we finish the proof of Theorem~\ref{thm_starScat} by
proving Proposition~\ref{prop_affine}. The latter shows that all star-shaped cones
have admissible medium cones arbitrarily close to them. Theorem~\ref{thm_admScatter} imples that these admissible ones always scatter.

Let us begin by recalling Definition~\ref{def_starCone0} of a
star-shaped cone, that states that a cone $C^\sigma$, with the vertex
at the origin, is \emph{star-shaped}, if
\[
C^\sigma \cap \mathbb S^2 := \big\{ (\vartheta,\varphi) \in \mathbb S^2 \;:\;
0\leq\varphi<2\pi,\, 0 \leq \vartheta < \sigma(\varphi) \big\},
\]
where $\sigma \colon [0,2\pi] \to (\rho_0,\pi/2)$, $\rho_0 \in (0,\pi/2)$
is a continuous, with $\sigma(0)=\sigma(2\pi)$.

In the following, we are interested in deformations of circular cones
(Definition~\ref{def_circCone}) into a star-shaped cone, which we call
star-shaped deformations.

\begin{definition} \label{def_starShaped}
A \emph{star-shaped deformation} of a circular cone $C_{\rho_0}$,
$\rho_0 \in (0,\pi/2)$, is a family of cones $\rho\mapsto
C^{\sigma}_{\rho}$ with vertex at the origin and having an
intersection of the form
\[
C^{\sigma}_{\rho} \cap \mathbb S^2 = \big\{ (\vartheta,\varphi) \in \mathbb S^2 \;:\;
\varphi \in [0,2\pi),\, 0 \leq \vartheta < \rho \sigma(\varphi) +
  (1-\rho)\rho_0 \big\}, \quad \rho \in [ -\epsilon, 1],
\]
for some continuous $\sigma \colon [0,2\pi] \to ( \rho_0,\pi/2)$ with
$\sigma(0)=\sigma(2\pi)$, and some $\epsilon > 0$ small enough that
$C^\sigma_{-\epsilon}$ is star-shaped.
\end{definition}

\noindent 
Note that $C^\sigma_\rho$ is a circular cone when $\rho=0$, in particular $C^\sigma_0 = C_{\rho_0}$.
For $\rho=1$, we have $C^\sigma_\rho = C^\sigma_1= C^\sigma$. The star-shaped
deformation is thus essentially given by an interpolation between the points in
the circular cone $C_{\rho_0}$ and the star-shaped cone $C^\sigma$.

As in the previous sections, we are interested in the functions 
$$
I_{N,\sigma}^{k,l}(\rho) : = \int_{C^{\sigma}_{\rho}\cap \mathbb S^2} Y_{N+2}^{k} \ov{Y}_{N}^{l} \,dS,
$$
this time integrated over the star-shaped deformation cap instead of a circular cap.
Furthermore recall that 
$$
\mathcal{C}_{N,\sigma}(\rho) :=
\begin{pmatrix}
I_{N,\sigma}^{-N,-N}(\rho) & \dots & I_{N,\sigma}^{-N,N}(\rho)  \\
\vdots & \ddots & \vdots \\
I_{N,\sigma}^{N,-N}(\rho) & \dots & I_{N,\sigma}^{N,N}(\rho) 
\end{pmatrix},
$$
and that
\begin{equation} \label{eq_starD}
\mathcal{D}_{N,\sigma}(\rho) := \det \mathcal{C}_{N,\sigma}(\rho).
\end{equation}

We will also be dealing with the \emph{associated Legendre
  polynomials}\footnote{which are not always polynomials!}  which are
defined in \eqref{eq_Pmn} in terms of the Legendre polynomials $P_n$,
but we reproduce the equation here for convenience.
\[
P^m_n := (-1)^m (1-x^2)^{m/2} \p_x^m P_n, \quad
P^{-m}_n := (-1)^m \frac{(n-m)!}{(n+m)!}  P^m_n, 
\]
where $m=0,..,n$. Moreover we set $P_n^m := 0$, for $m>n$.

It will be convenient to extend the definition of the associated
Legendre polynomials to $\C$. Note that extending the factor
$(1-x^2)^{1/2}$ can be done in several ways depending on which branch
of the square root we choose.

\begin{definition}\label{def_PmnC}
For $z \in \C$ we define
$$
P^m_n(z) := (-1)^m (1-z^2)^{m/2} \p_z^m P_n(z), \quad
P^{-m}_n(z) := (-1)^m \frac{(n-m)!}{(n+m)!}  P^m_n(z), 
$$
where we choose the square root so that it has the branch cut along $(-\infty, 0)$,
that is $\sqrt{z} = \sqrt{re^{i\varphi}} = \sqrt{r} e^{i\varphi/2}$, when $ \varphi \in (-\pi,\pi)$.
\end{definition}

\begin{lemma} \label{lem_Pnm_analytic}
The functions $P_n^m$ are complex analytic in $\{ x+iy\in\C\;:\; -1<x<1,\, y\in\R\}$.
\end{lemma}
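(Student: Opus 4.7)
The plan is to observe that in the definition
\[
P_n^m(z) = (-1)^m (1-z^2)^{m/2}\,\p_z^m P_n(z),
\]
the factor $\p_z^m P_n(z)$ is a polynomial in $z$ (hence entire), so the only potential analyticity issue comes from $(1-z^2)^{m/2}$. For even $m$ this factor is itself a polynomial in $z$ and there is nothing to check. For odd $m$, write $(1-z^2)^{m/2} = (1-z^2)^{(m-1)/2}\cdot(1-z^2)^{1/2}$, which reduces the problem to showing that $\sqrt{1-z^2}$, defined with the branch of Definition~\ref{def_PmnC}, is analytic on the strip $\Sigma := \{x+iy\in\C : -1<x<1,\, y\in\R\}$. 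For negative superscripts, $P_n^{-m}$ is just a constant multiple of $P_n^m$, so analyticity transfers automatically.

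Next I would verify that $1-z^2$ avoids the branch cut $(-\infty,0]$ of the chosen square root for every $z\in\Sigma$. Writing $z=x+iy$ with $-1<x<1$, one computes
\[
1-z^2 = (1-x^2+y^2) - 2ixy.
\]
For $1-z^2$ to land on $(-\infty,0]$ we would need both $\Im(1-z^2)=-2xy=0$ and $\Re(1-z^2)=1-x^2+y^2\leq 0$. The first condition forces $x=0$ or $y=0$. If $x=0$, then $\Re(1-z^2)=1+y^2\geq 1>0$, contradicting the second. If $y=0$, then the assumption $-1<x<1$ gives $\Re(1-z^2)=1-x^2>0$, again a contradiction. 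Hence $1-z^2\in\C\setminus(-\infty,0]$ throughout $\Sigma$, and $\sqrt{1-z^2}$ is analytic there as a composition of analytic maps.

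Combining these observations: $\sqrt{1-z^2}$ is analytic on $\Sigma$, so $(1-z^2)^{m/2}=\bigl(\sqrt{1-z^2}\bigr)^m$ is analytic on $\Sigma$ for every integer $m\geq 0$; $\p_z^m P_n$ is an entire polynomial; and the product of analytic functions is analytic. Hence $P_n^m$ (and by the symmetry relation $P_n^{-m}$) is analytic on $\Sigma$, as claimed. I do not anticipate any real obstacle here; the only step requiring care is the branch-cut verification above, and it is essential that the strip be open, since at $z=\pm 1$ the function $\sqrt{1-z^2}$ has a genuine branch point.
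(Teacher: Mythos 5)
Your proposal is correct and follows essentially the same route as the paper: reduce everything to the analyticity of $(1-z^2)^{1/2}$ with the chosen branch and verify that $1-z^2$ never meets the cut $(-\infty,0]$ on the strip. The paper checks this by factoring the map as $\sqrt{z}\circ(1-z)\circ z^2$ and tracking half-planes (equivalently, $\Re(1-z^2)=1-x^2+y^2>0$), while you argue directly from the real and imaginary parts of $1-z^2$; this is only a cosmetic difference.
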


\begin{proof} Since $\p_z^m P_n(z)$ is a polynomial and $(1-z)^{m/2}$ is a product, it will be enough
to show that $(1-z^2)^{1/2}$ is complex analytic on $\{x+iy\in\C\;:\;-1<x<1,\,y\in\R\}$.
Now 
$$
(1-z^2)^{1/2} = \sqrt{z} \circ (1-z) \circ z^2.
$$
Since $\Re z^2 = x^2-y^2 \in (-\infty,1)$ when $-1<x<1$, we have
\[
z^2 \colon \{x+iy\in\C\;:\;-1<x<1,\,y\in\R\} \to
\{x+iy\in\C\;:\;x<1,\,y\in\R\}
\]
is analytic and
\[
(1-z)\colon\{x+iy\in\C\;:\;x<1,\,y\in\R\} \to
\{x+iy\in\C\;:\;x>0,\,y\in\R\}
\]
is analytic. Finally $\sqrt{z} \colon \C \setminus (-\infty,0] \to \C$ is
analytic since we choose the branch cut in Definition~\ref{def_PmnC}
to be $(-\infty,0)$. The range of $(1-z)$ is contained in the former's
domain and so the restriction of $\sqrt{z}$ to the range of $(1-z)$ is
analytic too. Thus $(1-z^2)^{1/2}$ is complex analytic on
$\{x+iy\in\C\;:\;-1<x<1,\,y\in\R\}$, which proves the claim.
\end{proof}

\noindent
Next we will shows that $\mathcal{D}_{N,\sigma}$ are analytic
functions in the parameter $\rho$. For this it is convenient to make
use of the following elementary lemma. But before that, let us recall
some notation. For two sets of complex numbers $A,B\subset\C$ we
define
\[
AB = \{ab\in\C \;:\; a\in A, \, b\in B\}, \qquad A+B = \{a+b\in\C \;:\; a\in A, \, b\in B\}.
\]
This notation is particularly useful for talking about rectangles.
% In particular this makes it convenient to talk about rectangles which
% are used in the lemma.

\begin{lemma} \label{lem_analycityHLP}
  Let $a<b$ and $c<d$ be real numbers and $\delta$ a positive real
  number. Let $g,h \in L^\infty(a,b) \cap C(a,b)$ and denote
  \[
  \mathcal{R}' := (c,d) + i(-\delta,\delta).
  \]
  Suppose that
  \begin{enumerate}[(i)]
  \item a complex valued function $f$ is complex analytic on
    $\mathcal{R} := \mathcal{R}' g[(a,b)]$, where $g[(a,b)]$ is the
    image of the interval $(a,b)$ under $g$. Assume furthermore that
  \item for any $z_0 \in \mathcal{R}'$, one has the estimate $\int_a^b
    |f( z_0 g(t) ) h(t)| \,dt < \infty$.
  \end{enumerate}
  For $z_0\in\mathcal{R}'$, let
  \[
  F(z_0) := \int_a^b f( z_0 g(t) ) h(t) \,dt.
  \]
  Then $F$ is complex analytic on $\mathcal{R}'$. In particular $F$ is
  real analytic on $(c,d)$.
\end{lemma}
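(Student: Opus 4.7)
I would verify that $F$ satisfies the hypotheses of Morera's theorem on $\mathcal{R}'$: that $F$ is continuous there and that $\oint_{\partial\Delta} F(z_0)\,dz_0 = 0$ for every triangle $\Delta$ compactly contained in $\mathcal{R}'$. The structural input driving both parts is the observation that, for each fixed $t \in (a,b)$, the function $z_0 \mapsto f(z_0 g(t))$ is holomorphic on $\mathcal{R}'$: it is the composition of the entire map $z_0 \mapsto z_0 g(t)$, which sends $\mathcal{R}'$ into $\mathcal{R}$ by the very definition of $\mathcal{R}$, with the holomorphic function $f$ on $\mathcal{R}$ (the degenerate case $g(t)=0$ gives a constant, which is trivially holomorphic).

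\textbf{The contour integral} would then vanish by interchanging the order of integration and applying Cauchy's theorem to each inner integral,
\[
\oint_{\partial\Delta} F(z_0)\,dz_0 \;=\; \int_a^b h(t)\,\oint_{\partial\Delta} f(z_0 g(t))\,dz_0\,dt \;=\; 0.
\]
\textbf{Continuity of $F$} at $z_0 \in \mathcal{R}'$ I would establish by dominated convergence along sequences $z_n \to z_0$: pointwise convergence of the integrand is immediate from continuity of $f$, and a local dominator can be obtained by representing $f(zg(t))$ for $z \in \overline{B(z_0,\epsilon)}$ via the Cauchy integral formula on $\partial B(z_0,2\epsilon)\subset\mathcal{R}'$, which gives $\sup_{z\in \overline{B(z_0,\epsilon)}} |f(zg(t))| \lesssim \oint_{\partial B(z_0,2\epsilon)}|f(\zeta g(t))|\,|d\zeta|$ for each $t$. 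Once complex analyticity on $\mathcal{R}'$ is proven, real analyticity on $(c,d)\subset\mathcal{R}'$ is automatic.

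\textbf{The main obstacle} is justifying both the Fubini interchange in the contour step and the $L^1$-dominator in the continuity step. Both reduce to showing that the a priori only pointwise-finite function $L(z) := \int_a^b |f(zg(t))\,h(t)|\,dt$ is actually locally integrable on $\mathcal{R}'$. The key structural leverage is that for each $t$ the function $|f(zg(t))|$ is subharmonic in $z$ (as the modulus of a holomorphic function), which transfers a sub-mean-value inequality to $L$; combined with lower semicontinuity of $L$ (via Fatou's lemma on $z_n \to z$) and a Baire category argument applied to the closed sublevel sets $\{L \leq N\}$, whose union is all of $\mathcal{R}'$ by hypothesis (ii), one obtains local boundedness of $L$ on a dense open subset of $\mathcal{R}'$. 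Sub-mean-value propagation together with Fubini on the radial parameter (integrability on the annulus implies integrability on almost every concentric circle) then lets one choose the circles and triangles used above to lie in regions where the required $L^1$ bounds are available, so that $F$ is holomorphic at every point of $\mathcal{R}'$.
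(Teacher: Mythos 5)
Your first two steps are fine as far as they go: for each fixed $t$ the map $z_0\mapsto f(z_0g(t))$ is holomorphic on $\mathcal{R}'$ by the very definition of $\mathcal{R}$, so the Fubini--Cauchy computation and the Cauchy-formula dominator would deliver Morera's criterion and continuity \emph{provided} one knows that $L(z):=\int_a^b|f(zg(t))h(t)|\,dt$ is integrable on the relevant triangles and circles. The genuine gap is the final paragraph, where you try to extract this from hypothesis (ii) via Baire category and subharmonicity. Baire (applied to the closed sublevel sets of the lower semicontinuous $L$) only gives local boundedness of $L$ on a dense open subset $U\subset\mathcal{R}'$; the complement is a closed set with empty interior that may still have positive area. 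At a point $z_0\in\mathcal{R}'\setminus U$ the argument has nothing to work with: the sub-mean-value inequality bounds $L(z_0)$ by averages over circles \emph{centred at} $z_0$, so it only transports information towards $z_0$ if $L$ is already known to be integrable on such circles, which is exactly what is unknown there; likewise ``integrability on the annulus implies integrability on almost every concentric circle'' presupposes integrability on an annulus \emph{around} $z_0$, i.e. the local integrability you are trying to prove. Finally, even if you had continuity of $F$ on all of $\mathcal{R}'$ plus holomorphy on the dense open set $U$, that would not upgrade to holomorphy everywhere: the Cauchy transform of a compact set with empty interior and positive area is continuous on $\C$ and holomorphic precisely off that set. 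So the proposal does not establish analyticity at the points of $\mathcal{R}'\setminus U$.

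The heavy machinery is also unnecessary, and removing it is the repair. Since $(a,b)$ is a bounded interval and $g,h\in L^\infty(a,b)$, for $z$ ranging over a compact subset $\mathcal{K}$ of $\mathcal{R}'$ the arguments $zg(t)$ stay in $\mathcal{K}\,g[(a,b)]$, which the paper observes is compactly contained in $\mathcal{R}$; hence $\sup\{|f(zg(t))| : z\in\mathcal{K},\ t\in(a,b)\}<\infty$, and the constant $\sup|f|\cdot\|h\|_{L^\infty}$ is an integrable dominator on $(a,b)$. With this uniform bound both your Fubini interchange and your continuity step work at \emph{every} point of $\mathcal{R}'$, and hypothesis (ii) is only needed to know that $F$ is well defined. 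The paper itself argues in the same spirit but more directly: it forms the difference quotients of $F$, bounds them by $\sup_{\mathcal{K}g[(a,b)]}|f'|\cdot|g(t)|\le C\|g\|_{L^\infty}$ via the mean value estimate along the segment from $z_0g(t)$ to $(z_0+z_m)g(t)$, and concludes by dominated convergence that $F'(z_0)=\int_a^b f'(z_0g(t))\,g(t)\,h(t)\,dt$ exists, so $F$ is complex analytic on $\mathcal{R}'$.
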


\begin{proof}
Firstly notice that the first part of condition \emph{(ii)} guarantees
that $F$ is well defined on the rectangle $\mathcal{R}'$. Secondly, it
suffices to compute the derivative $F'(z_0)$ when $z_0 \in
\mathcal{R}'$ to prove the claim.  Choose a sequence $(z_m)$,
s.t. $z_m \to 0$ in $\C$. Define
$$
F^D_m(z_0) := \int_a^b \frac{f( (z_0 + z_m) g(t)) - f( z_0 g(t) )}{z_m} h(t)  \,dt
$$ and we are going to show that $F^D_m$ converges as $m\to\infty$ and
the limit will be $F'(z_0)$. Note that $z_0\in \mathcal{R}'$, which is
an open set, and since we are interested in the limit only, we may
assume that for all $m$ the whole segment from $z_0$ to $z_0 + z_m$ is
in a compact subset $\mathcal{K} \Subset \mathcal{R}'$, which depends
only on $z_0$ and $\mathcal{R}'$. This implies that
\begin{equation} \label{MVparams}
  \xi g(t) \in \mathcal{K} g[(a,b)] \Subset \mathcal{R}' g[(a,b)] =
  \mathcal{R}
\end{equation}
for any $t\in(a,b)$ and $\xi$ on that segment, i.e. that $\xi g(t)$
will stay a positive distance from $\partial\mathcal{R}$. This will be
used later in the proof after a mean value theorem.

To study the limit of $F^D_m(z)$ define
\[
D_m(z_0,t) := \frac{f( (z_0 + z_m) g(t)) - f( z_0 g(t) )}{z_m}.
\]
Let us study the limit of these as $m\to\infty$. Firstly, the
pointwise limit. If $g(t)=0$ then $D_m(z_0,t)=0$ so the pointwise
limit exists and is $0$. If $g(t)\neq0$ then the pointwise limit of
$D_m(z_0,t)$ is $f'(z_0g(t)) g(t)$ a formula which also applies to the
case $g(t)=0$. Next, let us show that the $|D_m(z_0,\cdot)|$ have an
integrable upper bound in the interval $(a,b)$. If $g(t)\neq0$, we
have
\begin{align*}
  |D_m(z_0,t)| &= \Big| \frac{f( (z_0 + z_m) g(t)) - f( z_0 g(t)
    )}{z_m g(t)} \Big| |g(t)| \\ &\lesssim |f'(\xi g(t))| |g(t)| \leq
  \sup_{\zeta\in \mathcal{K}g(a,b)} |f'(\zeta)| |g(t)|.
\end{align*}
by the mean value theorem for some $\xi\in\C$ which lies on the
segment connecting $z_0$ to $z_0+z_m$. This also holds when
$g(t)=0$. The supremum above is a finite number because $f$ is complex
analytic in $\mathcal{R}$ by \textit{(i)}, and so its derivative $f'$
is bounded on its compact subsets. The set $\mathcal{K} g(a,b)$ is
such a set according to \eqref{MVparams}. This means that for any
fixed $z_0$ there is a finite constant $C$ depending on
$a,b,c,d,\delta,z_0$ and $f$ such that $|D_m(z_0,t)| \leq C |g(t)|
\leq C \|g\|_\infty$ for all $t\in(a,b)$, and this is integrable.

By the dominated convergence Theorem and since $h\in L^\infty(a,b)$,
we have that for each $z_0\in\mathcal{R}'$ the following holds
\begin{align*}
  \lim_{m\to\infty} F_m(z_0) &= \lim_{m\to\infty} \int_a^b D_m(z_0,t)
  h(t) \,dt \\ &= \int_a^b\lim_{m\to\infty} D_m(z_0,t) h(t) \,dt \\ &=
  \int_a^b f'(z_0g(t)) g(t) h(t) \,dt
\end{align*}
and that the latter is a finite complex number. This gives the
existence of $F'(z_0)$. Hence $F$ is complex
analytic in $\mathcal{R}'$.
\end{proof}

\begin{lemma} \label{lem_analycity2}
The functions $\rho\mapsto\mathcal{D}_{N,\sigma}(\rho)$ of any
star-shaped deformation $\rho\mapsto C^\sigma_\rho$,
$\rho\in(-\epsilon,1)$ are real analytic functions in
$(-\epsilon_0,1)$ for some $0<\epsilon_0\leq\epsilon$. They are also
not identically zero.
\end{lemma}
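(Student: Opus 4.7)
The plan is to establish real analyticity of $\rho\mapsto\mathcal{D}_{N,\sigma}(\rho)$ in a neighborhood of $[0,1]$ and then observe that this function does not vanish at $\rho=0$, at which point the cone reduces to the circular cone $C_{\rho_0}$.

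First I would write the entries of $\mathcal{C}_{N,\sigma}(\rho)$ explicitly in spherical coordinates. Using the definition \eqref{YNmDef} of $Y_N^m$ and computing the $\vartheta$-integral as an iterated integral, each entry takes the form
\[
I_{N,\sigma}^{k,l}(\rho) \;=\; c_{k,l}\int_{0}^{2\pi} e^{i(k-l)\varphi}\, G_{k,l}\!\bigl(\beta_\rho(\varphi)\bigr)\, d\varphi,
\]
where $c_{k,l}$ is the product of normalization constants, the upper $\vartheta$-limit is $\beta_\rho(\varphi)=\rho_0+\rho(\sigma(\varphi)-\rho_0)$, and
\[
G_{k,l}(\beta) \;:=\; \int_0^\beta P_{N+2}^{k}(\cos\vartheta)\, P_{N}^{l}(\cos\vartheta)\,\sin\vartheta\, d\vartheta.
\]
Since $P_n^m(\cos\vartheta)$ is a polynomial in $\cos\vartheta$ and $\sin\vartheta$, the $\vartheta$-integrand is entire in $\vartheta$, and consequently $G_{k,l}\colon\C\to\C$ is entire.

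Next I would invoke Lemma~\ref{lem_analycityHLP} with $f=\tilde G_{k,l}$, $g(\varphi)=\sigma(\varphi)-\rho_0$ and $h(\varphi)=c_{k,l}\,e^{i(k-l)\varphi}$, where $\tilde G_{k,l}(x):=G_{k,l}(\rho_0+x)$. This recasts the entry as
\[
I_{N,\sigma}^{k,l}(\rho)\;=\;\int_{0}^{2\pi}\tilde G_{k,l}(\rho\, g(\varphi))\, h(\varphi)\, d\varphi.
\]
Continuity of $\sigma$ on $[0,2\pi]$ makes $g,h\in L^\infty(0,2\pi)\cap C(0,2\pi)$, and the entireness of $\tilde G_{k,l}$ means hypothesis \emph{(i)} of Lemma~\ref{lem_analycityHLP} is automatic on any complex strip $\mathcal R'=(c,d)+i(-\delta,\delta)$ containing $(-\epsilon_0,1)$. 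Hypothesis \emph{(ii)} is immediate from boundedness. Thus each $I_{N,\sigma}^{k,l}$ is real analytic on $(-\epsilon_0,1)$, and so is $\mathcal D_{N,\sigma}(\rho)$, which is a polynomial in the matrix entries.

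For the non-vanishing, note that at $\rho=0$ the upper limit $\beta_0(\varphi)=\rho_0$ is independent of $\varphi$, so $C^\sigma_0=C_{\rho_0}$ is a circular cone and $\mathcal D_{N,\sigma}(0)=\mathcal D_N(\rho_0)$. By the proof of Proposition~\ref{prop_circular}, the matrix $\mathcal C_N(\rho_0)$ is diagonal with strictly positive diagonal entries (since $\rho_0\in(0,\pi/2)$), so $\mathcal D_N(\rho_0)>0$ and therefore $\mathcal D_{N,\sigma}\not\equiv 0$. The main technical point is really just the careful verification of the hypotheses of Lemma~\ref{lem_analycityHLP}, which is why I would set $\epsilon_0\leq\epsilon$: this choice ensures $C^\sigma_\rho$ is still a genuine star-shaped cone throughout $(-\epsilon_0,1)$, while the entireness of $\tilde G_{k,l}$ guarantees no further restriction from the complex-analytic side.
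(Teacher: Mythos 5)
Your proof is correct, and it keeps the paper's overall skeleton: write each entry $I^{k,l}_{N,\sigma}(\rho)$ in the form $\int_0^{2\pi} f(\rho\,g(\varphi))\,h(\varphi)\,d\varphi$ with $g=\sigma-\rho_0$ and $h$ a bounded exponential, invoke Lemma~\ref{lem_analycityHLP}, pass to the determinant, and obtain non-vanishing from $\mathcal D_{N,\sigma}(0)=\mathcal D_N(\rho_0)\neq 0$ via the proof of Proposition~\ref{prop_circular}. The genuine difference is the choice of the inner function $f$. The paper substitutes $x=\cos\vartheta$, so its $f=f_1\circ f_2$ with $f_1(z)=\int_0^z P^k_{N+2}P^l_N\,dw$ and $f_2(z)=\cos(\rho_0+z)$; since $P^m_n$ carries the factor $(1-z^2)^{m/2}$, $f_1$ is analytic only on the strip $-1<\Re z<1$ (Definition~\ref{def_PmnC}, Lemma~\ref{lem_Pnm_analytic}), and the bulk of the paper's proof is the quantitative choice of $\epsilon_0$ in \eqref{epsilonChoice} and $\delta_0$ in \eqref{deltaChoice} ensuring that $f_2$ maps the rectangle $\mathcal R$ into that strip at positive distance from its boundary. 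You instead keep the $\vartheta$-variable: on the real interval of integration one has $(1-\cos^2\vartheta)^{m/2}=\sin^m\vartheta$, so the integrand $P^k_{N+2}(\cos\vartheta)P^l_N(\cos\vartheta)\sin\vartheta$ agrees with a trigonometric polynomial whose entire extension makes $\tilde G_{k,l}$ entire; hypotheses \emph{(i)}--\emph{(ii)} of Lemma~\ref{lem_analycityHLP} then hold trivially, no complex extension of $P^m_n$ is needed, and you could even take $\epsilon_0=\epsilon$, your restriction being purely geometric (keeping $C^\sigma_\rho$ well defined). What the paper's route buys is direct compatibility with the Legendre-integral formulas of Sections~\ref{sec_circular} and~\ref{sec_assocLegendre}; what yours buys is the elimination of the branch-cut analysis and the domain bookkeeping. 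One cosmetic remark: your claim that the diagonal entries at $\rho=0$ are strictly positive does hold (the $(-1)^m$ normalization factors cancel), but all that is needed is that they are nonzero, which is exactly what the proof of Proposition~\ref{prop_circular} provides.
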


\begin{proof}
Using spherical coordinates and referring to \eqref{YNmDef} we have that %integration by substitution we have that
\begin{align*} 
I_{N,\sigma}^{k,l}(\rho) 
&= 
\int_{C^{\sigma}_{\rho}\cap \mathbb S^2} Y_{N+2}^{k} \ov{Y}_{N}^{l} \,dS \\ 
&= 
\int_0^{2\pi}\int_0^{\rho_0 + \rho(\sigma(\varphi)-\rho_0)}
Y_{N+2}^{k}(\vartheta,\varphi) \ov{Y}_{N}^{l}(\vartheta,\varphi) \sin(\vartheta) \,d\vartheta d\varphi  \\
&=
C_N \int_0^{2\pi} e^{i(k-l)\varphi} 
\int_0^{\cos(\rho_0 + \rho(\sigma(\varphi)-\rho_0))}
P_{N+2}^{k}(x) P_{N}^{l}(x) \,dx d\varphi 
\end{align*}
Changing the variable symbols in preparation for
Lemma~\ref{lem_analycityHLP}, we can write
\[
I_{N,\sigma}^{k,l}(z_0) = C_N \int_0^{2\pi} f(z_0 g(t))h(t)
\,dt
\]
where $f = f_1 \circ f_2$ and 
\begin{align*}
  f_1(z) &=  \int_0^{z} P_{N+2}^{k}(w) P_{N}^{l}(w) \,dw, \\
  f_2(z) &= \cos(\rho_0 + z),\\
  g(t) &= \sigma(t)-\rho_0, \\
  h(t) &=  e^{i(k-l)t}
\end{align*}
and the integral in the definition of $f_1$ is interpreted as the
complex line integral of the straight line segment from $0$ to $z$.

We want to apply Lemma~\ref{lem_analycityHLP} to show that
$I_{N,\sigma}^{k,l}$ is real analytic in $(-\epsilon_0,1)$ for some
$\epsilon_0>0$. In order to do this we need to check that the
conditions \emph{(i)} and \emph{(ii)} of the Lemma~\ref{lem_analycityHLP} can be satisfied. For condition \textit{(i)},
we need to find $\epsilon_0 > 0$ and $\delta_0 > 0$ so that $f$ is
complex analytic on the rectangle
\begin{equation} \label{toproveI}
  \mathcal{R} := \mathcal R' g[(0,2\pi)],
\end{equation}
where
\begin{equation}
  \mathcal R' := (-\epsilon_0,1) + i (-\delta_0,\delta_0)
\end{equation}
and $\epsilon_0\leq\epsilon$ so that $C^\sigma_\rho$ is defined for
$\rho\in(-\epsilon_0,1)$. Choose $\epsilon_0 > 0$ by
\begin{equation} \label{epsilonChoice}
  \epsilon_0 := \min\Big(\frac{\rho_0}{2\|g\|_{L^\infty}},
  \epsilon\Big).
\end{equation}
Since $g(t) \in (0,\pi/2-\rho_0)$ for all $t$, and by our choice of
$\epsilon_0$, we have for $x_0 \in (-\epsilon_0, 1)$, that
\begin{align*} 
  \rho_0/2 < \rho_0 + x_0 g(t) < \pi/2.
\end{align*}
Recal also that
\[
\cos(x+iy) = \cos(x)\cosh(y) - i \sin(x)\sinh(y),
\]
where each of $\cos,\cosh,\sin,\sinh$ are real-valued when their
arguments are real. Let $\delta_2>0$ be arbitrary and let
\begin{equation} \label{deltaChoice}
  \delta_0 := \frac{\delta_1}{\|g\|_{L^\infty}}, \qquad \delta_1 =
  \frac12 \min\Big(\arsinh\delta_2,\,
  \arcosh\Big(\frac{1}{\cos(\rho_0/2)}\Big)\Big)
\end{equation}
both of which are positive, and they give $\delta_0 g(t) \in
(0,\delta_1)$ for all $t$. Then for $-\epsilon_0<x_0<1$ and $-\delta_0
< y_0 < \delta_0$, we have that
\begin{align*}
&f_2((x_0 + i y_0)g(t)) = \cos(\rho_0 + (x_0 + i y_0)g(t)) 
  \\ &\qquad= \cos(\rho_0 + x_0 g(t)) \cosh(y_0 g(t)) - i \sin(\rho_0 + x_0 g(t)) \sinh(y_0 g(t))
  \\ &\qquad \subset \cos[(\rho_0/2,\pi/2)]\cosh[(0,\delta_1)] - i \sin[(\rho_0/2,\pi/2)]\sinh[(-\delta_1,\delta_1)]
  \\ &\qquad \subset (0, \cos(\rho_0/2)\cosh \delta_1) + i (-\sinh\delta_1, \sinh\delta_1).
\end{align*}
Denote $\alpha=0$, $\beta=\cos(\rho_0/2)\cosh\delta_1$. Then by our choice of
$\delta_1$ we see that $\alpha<\beta<1$ and $\sinh\delta_1 < \delta_2$, and so
\[
f_2((x_0 + i y_0)g(t)) \subset (\alpha,\beta) + i (-\delta_2,\delta_2) =:
\mathcal R''
\]
for $x_0 \in (-\epsilon_0, 1)$, $y_0\in(-\delta_0,\delta_0)$ and
$t\in(0,2\pi)$. In other words,
\begin{equation} \label{f2anal}
  f_2 \colon \mathcal R \to \mathcal R''
\end{equation}
is complex analytic. In particular the associated Legendre polynomials
$P^m_n$ are analytic in the range of $f_2$. In fact, $\mathcal R''$ is
a positive distance $1-\beta$ away from any point where we do not know
them being analytic. See Lemma~\ref{lem_Pnm_analytic}.

Let us prove next that $f$ is complex analytic on the rectangle $\mathcal{R}$ given in \eqref{toproveI}. 
We know that $P_{N+2}^kP^l_N$ is
analytic on the vertical strip $\{x+iy\in\C\;:\;-1<x<1,\,y\in\R\}$ by
Lemma~\ref{lem_Pnm_analytic}. Hence
\[
f_1\colon z\mapsto \int_0^{z} P_{N+2}^{k}(w) P_{N}^{l}(w) \,dw
\]
% is analytic in that same half-space which happens to contain $\mathcal
is likewise analytic in the strip $\{x+iy\in\C\;:\;-1<x<1,\,y\in\R\}$, which happens to contain
$\mathcal R''$. Thus $f=f_1\circ f_2$ is analytic on $\mathcal R$ and so
condition \emph{(i)} of Lemma~\ref{lem_analycityHLP} is satisfied.

Next, we show that condition \emph{(ii)} is satisfied, i.e. that we
have $\int_0^{2\pi} |f(z_0g(t))h(t)| dt < \infty$ for any
$z_0\in\mathcal R'$. Recall that $|h(t)|=1$ in our case. Notice that
$f_1$ is in fact uniformly bounded on $\mathcal R''$. This is because
$f_1$ is analytic in the vertical strip mentioned above, and $\mathcal
R''$ is contained in it with distance $1-\beta>0$ from its
boundary. Since $f_2\colon\mathcal R \to \mathcal R''$, and $f_1\colon\mathcal
R''\to\C$ is uniformly bounded, we have $f = f_1\circ f_2 \colon \mathcal
R\to\C$ uniformly bounded. Given $z_0\in\mathcal R'$ and
$t\in(0,2\pi)$ we have $z_0g(t)\in\mathcal R$ and so we have a uniform
bound for $f(z_0g(t))$. Hence the integral is finite.

Lemma~\ref{lem_analycityHLP} imples that the maps $\rho \mapsto
I_{N,\sigma}^{k,l}(\rho)$ are real analytic on $(-\epsilon_0,1)$ where
$\epsilon_0$ is given by \eqref{epsilonChoice}. The determinant
functions $\mathcal{D}_{N,\sigma}$ are obtained from matrices having
elements $I_{N,\sigma}^{k,l}$ as their components. The determinant is
a sum of products of these elements and is therefore also real
analytic in $(-\epsilon_0, 1)$.

Finally, by the proof of Proposition~\ref{prop_circular} we know that
$\mathcal{D}_{N,\sigma}(0) \neq 0$, since $C^\sigma_0$ is a circular
cone. The functions $\mathcal{D}_{N,\sigma}$ are thus not identically
zero in $\rho$. This completes the proof.
\end{proof}

\noindent
We will now show that there exists a number of star-shaped 
cones, which are not necessarily circular, but still admissible medium cones.

\begin{proposition} \label{prop_affine}
Let $\rho\mapsto C_\rho^{\sigma}$, $\rho\in(-\epsilon,1)$, be a
star-shaped deformation which satisfies the regularity assumptions
\textit{(i)}--\textit{(iii)} of
Definition~\ref{def_admissible-medium-corner}. Let
$0<\epsilon_0<\epsilon$ be as in Lemma~\ref{lem_analycity2}. Then
there is a countable set $\mathcal Z$ such that all $C^\sigma_\rho$
with $\rho\in(-\epsilon_0,1)\setminus\mathcal Z$ are admisible.
\end{proposition}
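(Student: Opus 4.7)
The plan is to combine the determinant criterion of Lemma~\ref{lem_DetCond} with the analyticity statement of Lemma~\ref{lem_analycity2}. By hypothesis, every cone $C_\rho^\sigma$ in the deformation satisfies conditions \textit{(i)}--\textit{(iii)} of Definition~\ref{def_admissible-medium-corner}, so the only remaining obstruction to admissibility is the integral condition \textit{(iv)}. According to Lemma~\ref{lem_DetCond}, this condition is implied, uniformly in $N$, by the non-vanishing of any one of the determinants $\mathcal{D}_{\Psi_N,N}(\rho)$; in particular it is enough to show that for each fixed $N$ the determinant $\mathcal{D}_{N,\sigma}(\rho)=\mathcal{D}_{\Psi_0,N}(\rho)$ is nonzero for all $\rho$ in $(-\epsilon_0,1)$ outside a countable exceptional set.

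Next, I would invoke Lemma~\ref{lem_analycity2}, which asserts that for every $N\in\mathbb N$ the map $\rho\mapsto\mathcal{D}_{N,\sigma}(\rho)$ is real analytic on the interval $(-\epsilon_0,1)$ and is not identically zero (the latter being guaranteed by the fact that at $\rho=0$ the cone $C_0^\sigma$ is the circular cone $C_{\rho_0}$, which was shown in Proposition~\ref{prop_circular} to satisfy $\mathcal{D}_{N}(\rho_0)\neq 0$). A real analytic function on a connected open interval that is not identically zero has only isolated zeros, so its zero set
\[
Z_N := \big\{\rho\in(-\epsilon_0,1)\;:\;\mathcal{D}_{N,\sigma}(\rho)=0\big\}
\]
is at most countable.

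Finally, I would set
\[
\mathcal{Z} := \bigcup_{N\in\mathbb N} Z_N,
\]
which is a countable union of at-most-countable sets, hence countable. For any $\rho\in(-\epsilon_0,1)\setminus\mathcal{Z}$ we have $\mathcal{D}_{N,\sigma}(\rho)\neq 0$ for every $N\in\mathbb N$, and the regularity hypotheses on the deformation supply conditions \textit{(i)}--\textit{(iii)}. Lemma~\ref{lem_DetCond} then yields condition \textit{(iv)}, so $C_\rho^\sigma$ is an admissible medium cone, completing the proof.

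Because the heavy lifting (the analyticity in the deformation parameter, the explicit circular-cone computation of Section~\ref{sec_circular}, and the determinant criterion) has already been carried out in the preceding lemmas, this proposition is essentially a book-keeping argument. The only place where one must be a little careful is to confirm that Lemma~\ref{lem_analycity2} actually applies with the choice $\Psi_0$ at every $N$; this is built into its statement, and the non-vanishing at $\rho=0$ is the content of Proposition~\ref{prop_circular}. Hence no further obstacle arises.
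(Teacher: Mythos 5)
Your argument is correct and follows essentially the same route as the paper: invoke Lemma~\ref{lem_analycity2} for real analyticity and non-vanishing of $\rho\mapsto\mathcal{D}_{N,\sigma}(\rho)$, conclude each zero set is at most countable, take the countable union over $N$, and apply Lemma~\ref{lem_DetCond} together with the assumed regularity conditions \textit{(i)}--\textit{(iii)} to get admissibility off that exceptional set. No gaps.
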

\begin{proof}
By Lemma~\ref{lem_analycity2} we know that for every $N$ the map
$\rho\mapsto\mathcal{D}_{N,\sigma}(\rho)$ is analytic on
$(-\epsilon_0,1)$ without being identically zero. Consider the set
\[
\mathcal{Z} = \big\{ \rho \in (-\epsilon_0,1) \;:\;
\mathcal{D}_{N,\sigma}(\rho) = 0 \text{ for some } N \big\}.
\]
Lemma~\ref{lem_DetCond} implies that all of the cones
$C^\sigma_{\rho}$ with $\rho\in(-\epsilon_0,1)\setminus\mathcal Z$ are
admissible medium cones because $\rho\in
(-\epsilon_0,1)\setminus\mathcal Z$ implies that
$\mathcal{D}_{N,\sigma}(\rho) \neq 0$ for all $N$.

The set $\mathcal{Z}$ is countable as a countable union of countable
sets. This follows because $\mathcal D_{N,\sigma}$ has at most a
countable number of zeros in $(-\epsilon_0,1)$ since it is analytic
and not identically zero there by Lemma~\ref{lem_analycity2}.
\end{proof}

\section{Results on associated Legendre polynomials} \label{sec_assocLegendre}

\noindent
In this section we derive some results on associated Legendre polynomials, that are utilized 
in computing explicitly the determinants $\mathcal{D}_N$ in the case
of circular cones, which is done in Section~\ref{sec_circular}.
First we extend some well known formulas for certain inner products
of Legendre polynomials to the case of the associated Legendre polynomials,
after which we derive a modification of the Christoffel-Darboux formula,
which can be used to analyze these inner products.

\medskip
\noindent
Recall firstly that the associated Legendre polynomials are defined in
terms of the Legendre polynomials $P_n$, as
\begin{equation} \label{eq_Pmn}
P^m_n := (-1)^m (1-x^2)^{m/2} \p_x^m P_n, \quad
P^{-m}_n := (-1)^m \frac{(n-m)!}{(n+m)!}  P^m_n, 
\end{equation}
where $m=0,..,n$. Moreover we set $P_n^m := 0$, for $m>n$. The Legendre polynomials are defined in \cite{byerly93_fourier} equation (9), p.10.

We will first derive an integral formula for the special case of the associated Legendre polynomials
of the form $P^0_n$, i.e. in the case when they coincide with the Legendre polynomials $P_n$.
This formula can be found in \cite{byerly93_fourier} equation (5), p.172.
We however give the proof as a convenience to the reader.

\begin{lemma} \label{lem_integral}
Let $x_0 \in (-1,1)$. Then we  have the following formula
$$
\int_{x_0}^1 P^0_n P^0_{n+2}  \,dx = C(1-x_0^2)\big( \p_x P^0_{n+2} P^0_n - P^0_{n+2}  \p_x P^0_n \big)\big|_{x=x_0},
$$
\end{lemma}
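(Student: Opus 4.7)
The plan is to exploit the fact that $P_n^0 = P_n$ solves Legendre's differential equation
\[
\bigl((1-x^2) P_n'\bigr)' + n(n+1) P_n = 0,
\]
and then proceed by a standard Wronskian-type trick, reminiscent of how one proves orthogonality of Sturm-Liouville eigenfunctions.

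First I would write down the two Legendre equations for $P_n$ and $P_{n+2}$, multiply the first by $P_{n+2}$, the second by $P_n$, and subtract. After a short computation one checks the identity
\[
\frac{d}{dx}\Bigl[(1-x^2)\bigl(P_{n+2}\,P_n' - P_n\,P_{n+2}'\bigr)\Bigr]
= \bigl[(n+2)(n+3) - n(n+1)\bigr] P_n P_{n+2} = (4n+6)\,P_n P_{n+2},
\]
which is the heart of the argument: the product $P_n P_{n+2}$ has been expressed as a total derivative.

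Then I would integrate this identity from $x_0$ to $1$. The boundary term at $x=1$ vanishes thanks to the factor $1-x^2$, while at $x=x_0$ it produces exactly $(1-x_0^2)\bigl(P_{n+2}(x_0) P_n'(x_0) - P_n(x_0) P_{n+2}'(x_0)\bigr)$ up to sign. Dividing by $4n+6$ and rearranging yields the claimed formula with $C = -1/(4n+6)$ (or equivalently, after a sign flip, $1/(4n+6)$ in front of the version written in the lemma).

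There is essentially no obstacle: the only point to keep straight is the sign bookkeeping between the integrated derivative and the antisymmetric bracket $\partial_x P_{n+2}^0 P_n^0 - P_{n+2}^0 \partial_x P_n^0$, and the verification that the boundary contribution at $x=1$ vanishes, which is immediate from $(1-x^2)|_{x=1} = 0$ since $P_n, P_n'$ are polynomials and hence bounded there.
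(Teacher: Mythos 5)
Your proposal is correct and is essentially the paper's own proof: both cross-multiply the two Legendre equations, subtract to exhibit $(4n+6)P_nP_{n+2}$ as the derivative of the Wronskian-type expression $(1-x^2)\bigl(P_{n+2}P_n'-P_nP_{n+2}'\bigr)$, and integrate over $(x_0,1)$, where the boundary term at $x=1$ dies because of the factor $1-x^2$. For the record, once the signs are tracked the bracket ordered as in the statement, $\p_x P_{n+2}\,P_n - P_{n+2}\,\p_x P_n$, comes with the positive constant $C=1/(4n+6)$, which is the value the paper uses later.
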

where $C = 1/(4n+6)$.
\begin{proof} 
Note firstly that $P_n^0 = P_n$.
The Legendre polynomials in the claim solve the ODEs
\begin{align*} 
\p_x \big((1-x^2) \p_x P_n \big) + n(n+1)P_n &=0, \\
\p_x \big((1-x^2) \p_x P_{n+2} \big) + (n+2)(n+3)P_{n+2} &=0.
\end{align*}
Multiplying by $P_n$ and $P_{n+2}$, and subtracting the resulting equations gives, then
that
\begin{align*} 
-(4n +6) \int_{x_0}^1 P_n P_{n+2}  \,dx 
&=\int_{x_0}^1  \p_x \big((1-x^2) \p_x P_n \big)P_{n+2} - \p_x \big((1-x^2) \p_x P_{n+2} \big)P_n  \,dx  \\
&= (1-x^2) \big( \p_x P_n  P_{n+2} -  P_{n+2} \p_x P_n \big) \Big|_{x_0}^1 \\ 
&= (1-x_0^2) \big(P_{n+2}(x_0)  \p_x P_n(x_0) - \p_x P_{n+2}(x_0) P_n(x_0) ),
\end{align*}
which proves the claim.
\end{proof}

\medskip
\noindent
Now we generalize the  formula of Lemma~\ref{lem_integral} to the case where $m=1,..,n-1$.
First we derive the following simple formula (which can be found in e.g. \cite{byerly93_fourier}
p. 205).

\begin{lemma} \label{lem_ODEhlp}
For $m \geq 1$, we have the formula
$$
\p_x \big((1-x^2)^m\p_x^{m}P_n \big) = - C (1-x^2)^{m-1}\p_x^{m-1}P_n,
$$
where $C=  n(n+1) - m(m-1)$.
\end{lemma}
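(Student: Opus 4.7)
The plan is to reduce everything to Legendre's differential equation
$(1-x^2)P_n'' - 2xP_n' + n(n+1)P_n = 0$
and differentiate it the right number of times. Since the target identity involves $\p_x^{m-1}P_n$, $\p_x^{m}P_n$ and $\p_x^{m+1}P_n$, the natural move is to apply $\p_x^{m-1}$ to the ODE using the Leibniz rule. This is essentially how one derives the associated Legendre equation, so I expect no serious obstacle; the only delicate point is bookkeeping of binomial coefficients.

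More concretely, writing $u = P_n$ and applying Leibniz to $\p_x^{m-1}\bigl[(1-x^2)u''\bigr]$ and $\p_x^{m-1}\bigl[-2x u'\bigr]$, only the derivatives of $(1-x^2)$ up to order $2$ and of $-2x$ up to order $1$ survive. Collecting terms, I should arrive at
\[
(1-x^2)u^{(m+1)} - 2mx\,u^{(m)} + \bigl[n(n+1) - m(m-1)\bigr] u^{(m-1)} = 0.
\]
This is the associated Legendre ODE in disguise, and it packages precisely the three quantities appearing in the lemma.

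The final step is to recognize that the first two terms of the above identity, after multiplying by $(1-x^2)^{m-1}$, assemble into the derivative of a product:
\[
\p_x\!\bigl[(1-x^2)^m u^{(m)}\bigr]
= (1-x^2)^{m-1}\bigl[(1-x^2)u^{(m+1)} - 2mx\,u^{(m)}\bigr].
\]
Substituting the ODE identity for the bracket on the right yields
\[
\p_x\!\bigl[(1-x^2)^m \p_x^{m}P_n\bigr]
= -\bigl[n(n+1) - m(m-1)\bigr](1-x^2)^{m-1}\p_x^{m-1}P_n,
\]
which is the claim. As a sanity check, at $m=1$ the formula collapses to the standard Legendre equation $\p_x\bigl((1-x^2)P_n'\bigr) = -n(n+1)P_n$, matching $C = n(n+1)$. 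The main (and only) thing to be careful about is the combinatorial identity $(m-1)(m-2) + 2(m-1) = m(m-1)$, which is what produces the constant $n(n+1) - m(m-1)$ rather than some nearby but incorrect expression.
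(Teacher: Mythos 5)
Your proposal is correct and follows essentially the same route as the paper: derive the intermediate identity $(1-x^2)\p_x^{m+1}P_n - 2mx\,\p_x^{m}P_n + \bigl[n(n+1)-m(m-1)\bigr]\p_x^{m-1}P_n = 0$ by repeatedly differentiating Legendre's equation (the paper phrases this as an induction, you use Leibniz directly, which is the same computation), and then multiply by $(1-x^2)^{m-1}$ to fold the first two terms into $\p_x\bigl[(1-x^2)^m\p_x^{m}P_n\bigr]$.
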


\begin{proof} 
By taking repeated derivatives of the Legendre ODE
$$
(1-x^2)\p_x^2P_n - 2x\p_x P_n + n(n+1)P_n = 0,
$$
one arrives by an induction argument at the formula
$$
(1-x^2)\p_x^{m+1}P_n - 2mx\p^m_x P_n + (n(n+1)-m(m-1)) \p_x^{m-1}P_n = 0.
$$
Multiplying by $(1-x^2)^{m-1}$, gives then that
$$
\p_x \big((1-x^2)^m\p_x^{m}P_n \big) = - (n(n+1)- m (m-1)) (1-x^2)^{m-1}\p_x^{m-1}P_n.
$$
\end{proof}

\noindent
We are now ready to generalize the result of Lemma~\ref{lem_integral} to the case $m=1,..,n-1$.

\begin{lemma} \label{lem_integral2}
Let $x_0 \in (-1,1)$ and $m=1,..,n-1$. Then we  have the following formula
\begin{align*} 
\int_{x_0}^1 P^m_n P^m_{n+2}  \,dx =  C (1-x_0^2) \big( \p_x P^m_{n+2} P^m_n - P^m_{n+2}  \p_x P^m_n \big)\big|_{x=x_0},
\end{align*}
where $C = 1/(4n+6)$.
\end{lemma}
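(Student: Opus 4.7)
The plan is to mimic the Sturm--Liouville argument that proved Lemma~\ref{lem_integral}, now using the self-adjoint form of the associated Legendre equation. The key point is that $P^m_n$ satisfies
\[
\p_x\bigl((1-x^2)\p_x P^m_n\bigr) + \Bigl(n(n+1) - \frac{m^2}{1-x^2}\Bigr)P^m_n = 0,
\]
which can be checked directly from the definition $P^m_n = (-1)^m(1-x^2)^{m/2}\p_x^m P_n$ together with Lemma~\ref{lem_ODEhlp} and the product rule, or equivalently by differentiating the Legendre ODE $m$ times and rescaling. First I would record this ODE for both $P^m_n$ and $P^m_{n+2}$; note that the \emph{same} value of $m$ is used in both equations.

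Next I would multiply the equation for $P^m_n$ by $P^m_{n+2}$, the one for $P^m_{n+2}$ by $P^m_n$, and subtract. The central observation is that the singular terms $m^2/(1-x^2)$ have identical coefficients in the two equations, so they cancel cleanly. What remains is
\[
\bigl[n(n+1)-(n+2)(n+3)\bigr] P^m_n P^m_{n+2} = \p_x((1-x^2)\p_x P^m_n)\,P^m_{n+2} - \p_x((1-x^2)\p_x P^m_{n+2})\,P^m_n,
\]
with the numerical factor on the left equal to $-(4n+6)$. A short computation (the Lagrange identity, exactly as in Lemma~\ref{lem_integral}) shows that the right-hand side equals $\p_x W(x)$, where
\[
W(x) := (1-x^2)\bigl[\p_x P^m_{n+2}(x)\,P^m_n(x) - P^m_{n+2}(x)\,\p_x P^m_n(x)\bigr].
\]

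Integrating from $x_0$ to $1$ and using the fact that the prefactor $(1-x^2)$ kills the boundary term at $x=1$, so $W(1)=0$, I get
\[
-(4n+6)\int_{x_0}^1 P^m_n P^m_{n+2}\,dx = -W(x_0),
\]
which rearranges to the claimed identity with $C=1/(4n+6)$. The only non-routine step is the derivation of the self-adjoint associated Legendre equation from Lemma~\ref{lem_ODEhlp}; alternatively, one can avoid this step entirely by writing $P^m_n P^m_{n+2} = (1-x^2)^m \p_x^m P_n\,\p_x^m P_{n+2}$, performing integration by parts twice (using Lemma~\ref{lem_ODEhlp} to rewrite $(1-x^2)^m \p_x^m P_n$ as a derivative), and solving the resulting $2\times 2$ linear relation between the integrals at levels $m$ and $m+1$ for the integral at level $m$. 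Either route leads to the same Wronskian-type boundary term at $x_0$, and both rely on the cancellation made possible by having the \emph{same} index $m$ in the two factors of the integrand.
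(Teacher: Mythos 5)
Your argument is correct and reaches the stated identity, but by a genuinely different route than the paper. You invoke the classical associated Legendre equation in self-adjoint form, $\p_x\bigl((1-x^2)\p_x P^m_n\bigr)+\bigl(n(n+1)-m^2/(1-x^2)\bigr)P^m_n=0$, cross-multiply the equations for $P^m_n$ and $P^m_{n+2}$, and use the Lagrange identity, so that the eigenvalue gap $(n+2)(n+3)-n(n+1)=4n+6$ multiplies the integrand while the other side is the exact derivative of the Wronskian $W=(1-x^2)\bigl(\p_xP^m_{n+2}\,P^m_n-P^m_{n+2}\,\p_xP^m_n\bigr)$; this is precisely the $m=0$ argument of Lemma~\ref{lem_integral} transplanted to general $m$, the point being, as you say, that the singular terms $m^2/(1-x^2)$ cancel because the two factors carry the same $m$. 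The paper never writes down the associated Legendre ODE: it works with $(1-x^2)^m\p_x^mP_{n+2}\,\p_x^mP_n$, integrates by parts once in each of the two possible directions, uses Lemma~\ref{lem_ODEhlp} to produce the constants $C_1=(n+2)(n+3)-m(m-1)$ and $C_2=n(n+1)-m(m-1)$ in front of the lower-index integral, and subtracts, so the level-$m$ integral drops out, $C_1-C_2=4n+6$ appears, and the boundary terms are then massaged into the Wronskian form; this is essentially the ``alternative'' route you sketch at the end (the paper runs it one index higher, concluding at level $m-1$). Your Sturm--Liouville route buys a shorter, uniform argument -- it would also cover $m=n$, which the paper handles separately in Lemma~\ref{lem_integral3} -- at the price of importing the associated Legendre ODE, which is classical and, as you note, derivable from Lemma~\ref{lem_ODEhlp}; the paper's route stays entirely within identities it proves itself. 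Two small points to fix in a write-up: in your cross-multiplied display the right-hand side should read $\p_x\bigl((1-x^2)\p_xP^m_{n+2}\bigr)P^m_n-\p_x\bigl((1-x^2)\p_xP^m_n\bigr)P^m_{n+2}$ (as written it equals $-\p_xW$, so one sign in the chain is off, though your final relation $-(4n+6)\int_{x_0}^1P^m_nP^m_{n+2}\,dx=-W(x_0)$ and the conclusion are the correct ones); and for $m=1$ the vanishing $W(1)=0$ needs the easy extra observation that the bracket stays bounded near $x=1$, since $\p_xP^1_k\sim(1-x^2)^{-1/2}$ is compensated by $P^1_j\sim(1-x^2)^{1/2}$, before the prefactor $(1-x^2)$ kills the boundary term.
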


\begin{proof}
Assume that $1 \leq m \leq n-1$.
By integration by parts we get that
\begin{align*} 
\int_{x_0}^1 P^m_n P^m_{n+2}  \,dx 
&= 
\int_{x_0}^1  (1-x^2)^m \p_x^m P_{n+2} \p_x^m P_n\,dx \\
&= 
-\int_{x_0}^1  \p_x ((1-x^2)^m \p_x^m P_{n+2}) \p_x^{m-1} P_n\,dx 
+ \Big|_{x_0}^1 (1-x^2)^m \p_x^m P_{n+2} \p_x^{m-1} P_n. 
\end{align*}
Then using the formula of Lemma~\ref{lem_ODEhlp} we have that
\begin{align*} 
\int_{x_0}^1 P^m_n P^m_{n+2}  \,dx 
&= 
C_1 \int_{x_0}^1  (1-x^2)^{m-1} \p_x^{m-1} P_{n+2} \p_x^{m-1} P_n\,dx 
+ \Big|_{x_0}^1 (1-x^2)^m \p_x^m P_{n+2} \p_x^{m-1} P_n \\
&= 
C_1 \int_{x_0}^1 P^{m-1}_{n+2} P^{m-1}_n\,dx 
+ \Big|_{x_0}^1 (1-x^2)^m \p_x^m P_{n+2} \p_x^{m-1} P_n,
\end{align*}
where $C_1 =  (n+2)(n+3) - m(m-1)$.
Likewise we can use integration by parts to deduce that
\begin{align*} 
\int_{x_0}^1 P^m_n P^m_{n+2}  \,dx 
&= 
C_2 \int_{x_0}^1 P^{m-1}_{n+2} P^{m-1}_n\,dx 
+ \Big|_{x_0}^1 (1-x^2)^m \p_x^{m-1} P_{n+2} \p_x^{m} P_n.
\end{align*}
where $C_2=  n(n+1) - m(m-1)$. 
Subtracting these we obtain that
\begin{align*} 
C_3 \int_{x_0}^1 P^{m-1}_{n+2} P^{m-1}_n\,dx 
&= 
\Big|_{x_0}^1 \Big( (1-x^2)^m \p_x^{m-1} P_{n} \p_x^{m} P_{n+2}
-
(1-x^2)^m \p_x^{m} P_{n} \p_x^{m-1} P_{n+2} \Big),
\end{align*}
where the constant is given by 
$
C_3 := C_1- C_2 = 4n+6.
$
This can be further simplified, since by a straight forward computation
\begin{align*} 
(1-x^2)^m \p_x^{m-1} P_{n+2} \p_x^{m} P_n
=
(1-x^2) P^{m-1}_{n+2} \p_x P^{m-1}_n + (m-1)xP_{n+2}^{m-1}P_n^{m-1},
\end{align*}
and likewise 
\begin{align*} 
(1-x^2)^m \p_x^{m} P_{n+2} \p_x^{m-1} P_n
=
(1-x^2) \p_x P^{m-1}_{n+2} P^{m-1}_n + (m-1)xP_{n+2}^{m-1}P_n^{m-1}.
\end{align*}
Putting this together gives then that
\begin{align*} 
C_3 \int_{x_0}^1 P^{m-1}_{n+2} P^{m-1}_n\,dx 
&= 
\Big|_{x_0}^1 (1-x^2)\Big( P^{m-1}_{n} \p_x P^{m-1}_{n+2}
-
\p_x P^{m-1}_{n} P^{m-1}_{n+2} \Big).
\end{align*}
This proves the claim for the cases $m = 1,..,n-1$. 
\end{proof}

\noindent
Finally we prove the results of lemmas \ref{lem_integral} and \ref{lem_integral2}, for the case $m=n$.

\begin{lemma} \label{lem_integral3}
Let $x_0 \in (-1,1)$. Then we  have the following formula
\begin{align*} 
\int_{x_0}^1 P^n_n P^n_{n+2}  \,dx =  C (1-x_0^2) \big( \p_x P^n_{n+2} P^n_n - P^n_{n+2}  \p_x P^n_n \big)\big|_{x=x_0},
\end{align*}
where $C = 1/(4n+6)$.
\end{lemma}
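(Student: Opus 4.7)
The plan is to mimic the approach of Lemma~\ref{lem_integral} (the $m=0$ case) rather than that of Lemma~\ref{lem_integral2}, by working directly with the associated Legendre differential equation instead of integrating by parts on the defining formula $P^m_n=(-1)^m(1-x^2)^{m/2}\partial_x^m P_n$. The reason is that the reduction-of-order trick used in Lemma~\ref{lem_integral2} breaks down at the endpoint $m=n$: one ends up needing to write $P^{n-1}_{n+2}$ in terms of objects which no longer form a clean pair, whereas the associated Legendre ODE gives a symmetric Wronskian identity that is insensitive to the value of $m$.

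Concretely, I would start from the fact that both $P^n_n$ and $P^n_{n+2}$ satisfy the same associated Legendre equation, with the common index $m=n$:
\[
\partial_x\!\left((1-x^2)\,\partial_x P^n_k\right)+\left(k(k+1)-\frac{n^2}{1-x^2}\right)P^n_k=0,\qquad k\in\{n,n+2\}.
\]
Multiplying the equation for $P^n_{n+2}$ by $P^n_n$, the equation for $P^n_n$ by $P^n_{n+2}$, and subtracting, the singular potential $n^2/(1-x^2)$ cancels exactly, leaving
\[
\bigl[(n+2)(n+3)-n(n+1)\bigr] P^n_n P^n_{n+2}
= \partial_x\!\left((1-x^2)\partial_x P^n_n\right) P^n_{n+2} - \partial_x\!\left((1-x^2)\partial_x P^n_{n+2}\right) P^n_n.
\]
Integrating over $[x_0,1]$, noting that $(n+2)(n+3)-n(n+1)=4n+6$, and integrating the right-hand side by parts yields
\[
(4n+6)\int_{x_0}^1 P^n_n P^n_{n+2}\,dx = \Bigl[(1-x^2)\bigl(\partial_x P^n_n\,P^n_{n+2}-\partial_x P^n_{n+2}\,P^n_n\bigr)\Bigr]_{x_0}^{1}.
\]

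The main point to check — and the only thing that could conceivably cause trouble — is that the boundary contribution at $x=1$ vanishes. This is where the endpoint case $m=n$ requires a moment's care, since $\partial_x P^n_n$ and $\partial_x P^n_{n+2}$ can have a singularity at $x=1$ coming from differentiating the factor $(1-x^2)^{n/2}$ in the definition~\eqref{eq_Pmn}. However, $P^n_k(x)$ itself carries a factor $(1-x^2)^{n/2}$, and the worst singularity in $\partial_x P^n_k(x)$ as $x\to 1$ is of order $(1-x^2)^{(n-2)/2}$. Hence the product $(1-x^2)\,P^n_k\,\partial_x P^n_j$ behaves like $(1-x^2)^{1+n/2+(n-2)/2}=(1-x^2)^{n}$ near $x=1$, which vanishes for $n\ge 1$. (The case $n=0$ is already contained in Lemma~\ref{lem_integral}.) Therefore only the evaluation at $x=x_0$ survives, and after a sign flip we obtain
\[
\int_{x_0}^1 P^n_n P^n_{n+2}\,dx = \frac{1}{4n+6}\,(1-x_0^2)\bigl(\partial_x P^n_{n+2}\,P^n_n-P^n_{n+2}\,\partial_x P^n_n\bigr)\Big|_{x=x_0},
\]
which is the claim with $C=1/(4n+6)$.
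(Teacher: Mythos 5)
Your argument is correct, but it is not the route the paper takes for this case. You prove the identity by cross-multiplying the two \emph{associated} Legendre equations of order $m=n$ (in Sturm--Liouville form), observing that the singular terms $n^2/(1-x^2)$ cancel, and reading off the Wronskian-type boundary term, with the only delicate point being the vanishing at $x=1$ -- which you handle correctly, since $P^n_k\sim c\,(1-x^2)^{n/2}$ and $\p_x P^n_k=O\big((1-x^2)^{n/2-1}\big)$ near $x=1$, so the bracket behaves like $(1-x^2)^n$ there (and the $n=0$ case is Lemma~\ref{lem_integral}). The paper instead never invokes the associated Legendre ODE: it starts from $\p_x^{n+1}P_n=0$, so that $\int_{x_0}^1(1-x^2)^{n+1}\p_x^{n+1}P_{n+2}\,\p_x^{n+1}P_n\,dx=0$, integrates by parts once, applies Lemma~\ref{lem_ODEhlp} to the differentiated factor, and then rewrites the resulting boundary term in terms of $P^n_n$, $P^n_{n+2}$ and their first derivatives; there the endpoint $x=1$ is harmless because of the explicit factor $(1-x^2)^{n+1}$. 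What your approach buys is uniformity and brevity: exactly the same computation works for every order $0\le m\le n$, so it reproves Lemmas~\ref{lem_integral} and~\ref{lem_integral2} along with this one, at the price of importing the associated Legendre equation as a known fact (it can, if one wants to stay self-contained, be derived from Lemma~\ref{lem_ODEhlp}) and of the short singularity analysis at $x=1$; the paper's route stays entirely within elementary manipulations of the definition \eqref{eq_Pmn} but needs the separate endpoint argument you are replacing. If you use your version, state the associated Legendre ODE explicitly with a reference or a one-line derivation, and phrase the evaluation at $x=1$ as a limit $x\to1^-$ (for $n=1$ the derivatives are unbounded there, though the bracket still tends to $0$), so the fundamental theorem of calculus is applied on $[x_0,1-\varepsilon]$ before letting $\varepsilon\to0$.
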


\begin{proof}
Note that $\p_x^{n+1} P_n = 0$. By integration by parts we have that
\begin{equation} \label{eq_parts}
\begin{aligned}
0&=  \int_{x_0}^1  (1-x^2)^{n+1} \p_x^{n+1} P_{n+2} \p_x^{n+1} P_n\,dx \\
&= 
-\int_{x_0}^1  \p_x ((1-x^2)^{n+1} \p_x^{n+1} P_{n+2}) \p_x^{n} P_n\,dx 
+ \Big|_{x_0}^1 (1-x^2)^{n+1} \p_x^{n+1} P_{n+2} \p_x^{n} P_n. 
\end{aligned}
\end{equation}
Let's rewrite the first term on the r.h.s. of the last line.
Using Lemma \eqref{lem_ODEhlp} we have that
\begin{align*} 
\p_x ((1-x^2)^{n+1} \p_x^{n+1} P_{n+2}) = -(4n+6) (1-x^2)^n \p_x^n P_{n+2}.
\end{align*}
Notice that by inserting this back into to \eqref{eq_parts} we get the term on the l.h.s. 
of the claim.

It is thus enough to show that the boundary term in \eqref{eq_parts} gives the expression on the r.h.s.
of the claim. For this  we use the fact that 
\begin{align*} 
(1-x^2)^{n+1} \p_x^{n+1} P_{n+2} \p_x^{n} P_n
&=
(1-x^2)  P_n^n \big( (1-x^2)^{n/2} \p_x^{n+1} P_{n+2} \big) \\
&=
(1-x^2)  P_n^n \big( \p_x P^n_{n+2} -  \p_x (1-x^2)^{n/2} \p_x^{n} P_{n+2} \big).
\end{align*}
Now we have that
$$
\p_x P_n^n = \p_x (1-x^2)^{n/2} \p_x^{n} P_{n}, 
$$
so the previous equations yields by a short computation that 
\begin{align*} 
(1-x^2)^{n+1} \p_x^{n+1} P_{n+2} \p_x^{n} P_n
&=
(1-x^2)  \big( P_n^n\p_x P^n_{n+2} -  P^n_{n+2} \p_x P^n_n \big).
\end{align*}
Going back to \eqref{eq_parts} we see that
\begin{equation*} 
\begin{aligned}
(4n+6)\int_{x_0}^1 (1-x^2)^n \p_x^n P_{n+2}  \p_x^{n} P_n\,dx 
&=
-\Big|_{x_0}^1 
(1-x^2)  \big( P_n^n\p_x P^n_{n+2} - P^n_{n+2} \p_x P^n_n \big),
\end{aligned}
\end{equation*}
and that the claim holds.
\end{proof}

\noindent
Next we prove the following Christoffel-Darboux type formula, which is a slight modification
of the usual formula (see p.43 in \cite{szego75_orthog_polyn}). Note that the r.h.s. of the claim
has a sign determined by the sign of $x$.

\begin{lemma} \label{lem_ChristoffelDarboux3}
Let $0\leq m \leq n$. We have the  following identity
\begin{align*}
P^m_{n} \p_x P^m_{n+2}  -  \p_x P^m_{n} P^m_{n+2} 
=  2 a_{n+1}x
\left\{
\begin{aligned}
& \sum_{j=0}^{(n-m)/2} c_j [P^m_{m+2j}]^2, &\text{$n-m$ is even} \\
& \sum_{j=0}^{(n-m-1)/2} \tilde c_j [P^m_{m+1+2j}]^2, &\text{$n-m$ is odd} \\
% & a_{n-2}b_{n-1}b_{n-2}, &\text{ if $j=1$}, \\
\end{aligned}
\right.
\end{align*}
where the coefficients $c_j$ are given by
\begin{equation*} 
c_{j} :=
\left\{
\begin{aligned}
& a_n, &j=(n-m)/2, \\
& a_{m+2j}b_{n-1}b_{n-2}\dots b_{m+2j}, &j < (n-m)/2.
\end{aligned}
\right.
\end{equation*}
And the coefficients $ \tilde c_j$ are given by
\begin{equation*} 
\tilde c_{j} :=
\left\{
\begin{aligned}
& a_n, &j=(n-m-1)/2, \\
& a_{m+1+2j}b_{n-1}b_{n-2}\dots b_{m+1+2j}, &j < (n-m-1)/2.
\end{aligned}
\right.
\end{equation*}
% $$
% \p_x P^m_{n+2} P^m_n - P^m_{n+2}  \p_xP^m_n = 2a_{n+1} x \sum_{j=0}^{\lfloor n/2 \rfloor}  c_{n,j} [P^m_{n-2j}(x)]^2,
% $$
% where the constants $c_{n,j}$ are given by
% \begin{equation*} 
% c_{n,j} :=
% \left\{
% \begin{aligned}
% & a_n, &\text{if $j=0$}, \\
% & a_{n-2}b_{n-1}b_{n-2}, &\text{ if $j=1$}, \\
% & a_{n-2j}b_{n-1}\dots b_{n-2j}, &\text{ else. }
% \end{aligned}
% \right.
% \end{equation*}
And where coefficients $a_k$ and $b_k$ are given by
\begin{align*} 
a_k := \frac{2k+1}{k-m+1}, \quad b_k:=\frac{k+m+1}{k-m+2},  \quad \text{for} \quad m \leq k,
\end{align*}
and $a_k = 0 = b_k$, when $m > k$.

\end{lemma}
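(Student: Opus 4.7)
The plan is to reduce the claimed identity to an iterated application of the standard three-term recurrence
\begin{equation*}
P^m_{n+1} = a_n\, x\, P^m_n - b_{n-1}\, P^m_{n-1}
\end{equation*}
for the associated Legendre polynomials, whose coefficients $(2n+1)/(n-m+1)$ and $(n+m)/(n-m+1)$ match precisely the $a_n$ and $b_{n-1}$ in the lemma. I introduce the two auxiliary sequences
\begin{equation*}
V_n := P^m_n \p_x P^m_{n+1} - P^m_{n+1} \p_x P^m_n, \qquad \Pi_n := P^m_n P^m_{n+1}.
\end{equation*}
Substituting the recurrence, and its $x$-derivative for $V_n$, the cross-terms in $\p_x P^m_n$ cancel and one obtains the coupled pair
\begin{equation*}
V_n = a_n (P^m_n)^2 + b_{n-1} V_{n-1}, \qquad \Pi_n = a_n x (P^m_n)^2 - b_{n-1} \Pi_{n-1}.
\end{equation*}
Both are anchored at $V_{m-1}=0=\Pi_{m-1}$, either because $P^m_{m-1}\equiv 0$ when $m\geq 1$, or, when $m=0$, via the convention $b_{-1}=0$ already adopted in the lemma.

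Iterating these one-step recurrences down to the base case yields the explicit closed forms
\begin{equation*}
V_n = \sum_{k=m}^n a_k\Bigl(\prod_{l=k}^{n-1} b_l\Bigr) (P^m_k)^2, \qquad \Pi_n = x\sum_{k=m}^n (-1)^{n-k} a_k \Bigl(\prod_{l=k}^{n-1} b_l\Bigr)(P^m_k)^2,
\end{equation*}
with the product over the empty set understood to be $1$ (the top term $k=n$). The crucial observation is that these two sums coincide apart from the alternating sign $(-1)^{n-k}$, which is precisely what will produce the parity-selected sum in the lemma.

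Next I express the left-hand side of the lemma in terms of $V_n$ and $\Pi_n$. Applying the recurrence shifted by one, $P^m_{n+2} = a_{n+1} x P^m_{n+1} - b_n P^m_n$, and its $x$-derivative, a direct expansion of $P^m_n \p_x P^m_{n+2} - P^m_{n+2}\p_x P^m_n$ makes the $b_n P^m_n \p_x P^m_n$ contributions cancel, giving $U_n := P^m_n \p_x P^m_{n+2} - P^m_{n+2} \p_x P^m_n = a_{n+1}(\Pi_n + x V_n)$. Inserting the explicit formulas, the coefficient of $(P^m_k)^2$ inside $\Pi_n + xV_n$ carries the factor $x(1+(-1)^{n-k})$, which vanishes when $n-k$ is odd and equals $2x$ when $n-k$ is even. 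Hence only the indices $k$ of the same parity as $n$ survive; setting $k=m+2j$ in the even case and $k=m+1+2j$ in the odd case recovers exactly the coefficients $c_j$ and $\tilde c_j$ defined in the statement, so $U_n = 2a_{n+1}x\cdot S$.

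The main obstacle I anticipate is purely bookkeeping: one must verify that the three-term recurrence propagates to the asymmetric pair $(V_n,\Pi_n)$ with signs arranged to force the parity cancellation, and that the surviving coefficients — in particular the top-index coefficient $a_n$, which comes from the empty product at $k=n$ — match the special value assigned to $c_j$ and $\tilde c_j$ at their top index in both parity cases.
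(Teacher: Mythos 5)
Your argument is correct, and it reaches the stated identity by a route that differs in structure from the paper's. The paper runs a classical Christoffel--Darboux-type computation in two variables: it forms $P^m_{n}(x)P^m_{n+2}(y)-P^m_{n}(y)P^m_{n+2}(x)$, reduces it via the three-term recurrence to $a_{n+1}\bigl(yP^m_{n+1}(y)P^m_n(x)-xP^m_{n+1}(x)P^m_n(y)\bigr)$, iterates the recurrence so that the odd-parity terms cancel in the subtraction, divides by $x-y$, and finally obtains the Wronskian-type left-hand side by the limit $y\to x$. You instead stay in one variable throughout, setting $V_n=P^m_n\p_x P^m_{n+1}-P^m_{n+1}\p_x P^m_n$ and $\Pi_n=P^m_nP^m_{n+1}$, deriving the exact one-step recursions $V_n=a_n(P^m_n)^2+b_{n-1}V_{n-1}$ and $\Pi_n=a_nx(P^m_n)^2-b_{n-1}\Pi_{n-1}$ (both of which I checked), anchoring them at $V_{m-1}=\Pi_{m-1}=0$, and then writing the target as $a_{n+1}(\Pi_n+xV_n)$, where the factor $1+(-1)^{n-k}$ performs the same parity selection that the paper achieves through the two-point cancellation. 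The key input (the recurrence with coefficients $a_n$, $b_{n-1}$) and the mechanism that produces the every-other-index sum are the same, but your version avoids the divided difference and the limiting argument, replacing them with closed, termwise-verifiable recursions; this is arguably cleaner and makes the bookkeeping of the top coefficient $a_n$ (empty product at $k=n$) and of the boundary conventions ($P^m_{m-1}\equiv0$, respectively $b_{m-1}=0$ under the lemma's convention $b_k=0$ for $k<m$) completely transparent, at the cost of introducing the auxiliary pair $(V_n,\Pi_n)$ rather than quoting the familiar kernel identity. The resulting coefficients match $c_j$ and $\tilde c_j$ exactly in both parity cases, so the proof is complete as proposed.
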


\begin{proof} 
The recurrence relation
$$
(n-m+1)P^m_{n+1} =  (2n+1)x P^m_{n} - (n+m)P^m_{n-1}
% \quad \Rightarrow\quad
$$
implies  that 
\begin{equation} \label{eq_recurence}
\begin{aligned}
P^m_{n+1} &=  a_n x P^m_{n} -  b_{n-1} P^m_{n-1}, \\
P^m_{n+2} &= a_{n+1} x P^m_{n+1} - b_n P^m_n.
\end{aligned}
\end{equation}
Using the second of these equalities gives that
\begin{align}  \label{eq_Pcommutator}
P^m_{n}(x) P^m_{n+2}(y)  -P^m_{n}(y) P^m_{n+2}(x) 
= 
a_{n+1} 
\big(yP^m_{n+1}(y) P^m_{n}(x)  - x P^m_{n+1}(x) P^m_{n}(y) \big).
\end{align}
Lets evaluate the two terms on the r.h.s. of the  equality.
By successively applying the recurrence relation in \eqref{eq_recurence}
we get the equations
\begin{align*} 
P^m_{n+1}(y) P^m_{n}(x) &=   a_n     y P^m_{n}(y) P^m_{n}(x)     -  b_{n-1} P^m_{n-1}(y)P^m_{n}(x), \\ 
P^m_{n-1}(y) P^m_{n}(x) &=   a_{n-1} x P^m_{n-1}(x) P^m_{n-1}(y) -  b_{n-2} P^m_{n-2}(x)P^m_{n-1}(y), \\ 
P^m_{n-1}(y) P^m_{n-2}(x) &= a_{n-2} y P^m_{n-2}(x) P^m_{n-2}(y) -  b_{n-3} P^m_{n-2}(x)P^m_{n-3}(y), \\ 
P^m_{n-3}(y) P^m_{n-2}(x) &= a_{n-3} x P^m_{n-3}(x) P^m_{n-3}(y) -  b_{n-4} P^m_{n-4}(x)P^m_{n-3}(y), \\ 
& \; \; \vdots 
% \\
% P^m_0(y)P^m_1(x) &= a_0 z_1 P^m_0(y)P^m_0(x),
\end{align*}
% where the $z_1$  in the last term is defined as 
where the last non-zero line is
\begin{equation*} 
\left\{
\begin{aligned}
&P^m_{m+1}(y) P^m_{m}(x) = a_{m} y P^m_{m}(x) P^m_{m}(y), \quad\text{if $n-m$ is even}, \\
&P^m_{m}(y) P^m_{m+1}(x) = a_{m} x P^m_{m}(x) P^m_{m}(y), \quad\text{if $n-m$ is odd}, 
\end{aligned}
\right.
\end{equation*}
since we defined $P^k_l = 0$, when $k>l$. It will be convenient to define 
\begin{equation*} 
z_1 :=
\left\{
\begin{aligned}
-&y, \text{ if $n-m$ is even}, \\
&x, \text{ if $n-m$ is odd}.
\end{aligned}
\right.
\end{equation*}
Using these equations to rewrite the first term on the r.h.s of \eqref{eq_Pcommutator},
we get that
\begin{align*} 
yP^m_{n+1}(y) P^m_{n}(x)  
= 
&a_n     y^2 P^m_{n}(y) P^m_{n}(x)  \\   
&-a_{n-1} b_{n-1}  xy P^m_{n-1}(x) P^m_{n-1}(y)  \\
&+a_{n-2} b_{n-1} b_{n-2} y^2 P^m_{n-2}(x) P^m_{n-2}(y) \\ 
&-a_{n-3} b_{n-1} b_{n-2} b_{n-3} xy  P^m_{n-3}(x) P^m_{n-3}(y) \\
& \;\;\vdots \\
&-a_{m} b_{n-1} \dots b_{m} z_1y P^m_m(y)P^m_m(x).
\end{align*}
For the second term on the r.h.s of \eqref{eq_Pcommutator}, we get similarly the expression
\begin{align*} 
xP^m_{n+1}(x) P^m_{n}(y)  
= 
&a_n     x^2 P^m_{n}(x) P^m_{n}(y)  \\   
&-a_{n-1} b_{n-1}  yx P^m_{n-1}(y) P^m_{n-1}(x)  \\
&+a_{n-2} b_{n-1} b_{n-2} x^2 P^m_{n-2}(y) P^m_{n-2}(x) \\ 
&-a_{n-3} b_{n-1} b_{n-2} b_{n-3} yx  P^m_{n-3}(y) P^m_{n-3}(x) \\
& \;\;\vdots \\
&-a_{m} b_{n-1} \dots b_{m} z_2x P^m_m(x)P^m_m(y).
\end{align*}
where the $z_2$  in the last term is defined as 
\begin{equation*} 
z_2 :=
\left\{
\begin{aligned}
-&x, \text{ if $n-m$ is even}, \\
&y, \text{ if $n-m$ is odd}.
\end{aligned}
\right.
\end{equation*}
Subtracting the two equalities gives then that
\begin{align*} 
yP^m_{n+1}(y) P^m_{n}(x)  
-xP^m_{n+1}(x) P^m_{n}(y)  
= 
&a_n (y^2-x^2) P^m_{n}(y) P^m_{n}(x)  \\   
&+a_{n-2} b_{n-1} b_{n-2} (y^2 -x^2)  P^m_{n-2}(x) P^m_{n-2}(y) \\ 
&+ \dots \\
% &+a_{m+1} b_{n-1}\dots b_{m+1} (y^2 -x^2)  P^m_{m+11}(x) P^m_{m+1}(y) \\ 
&+a_{m} b_{n-1} \dots b_{m} ( xz_2- yz_1)  P^m_{m}(x) P^m_{m}(y). 
\end{align*}
Notice that the last term is non zero only if $n-m$ is even, and in this 
case it equals to $y^2-x^2$.
Going back to \eqref{eq_Pcommutator}, we get that
\begin{align*} 
\frac{ P^m_{n}(x) P^m_{n+2}(y)  -  P^m_{n}(y) P^m_{n+2}(x) }{x-y}
= 
-a_{n+1} \Big( &a_n (y + x) P^m_{n}(y) P^m_{n}(x)  \\   
&+a_{n-2} b_{n-1} b_{n-2} (y + x)  P^m_{n-2}(x) P^m_{n-2}(y) + \dots \Big)
\end{align*}

Adding $\pm P^m_{n+2}(y)P^m_n(y)$ in the denominator and taking the limit $y  \to x$, gives
that
\begin{align*}
P^m_{n} \p_x P^m_{n+2}  -  \p_x P^m_{n} P^m_{n+2} 
=  2 a_{n+1}x
\left\{
\begin{aligned}
& \sum_{j=0}^{(n-m)/2} c_j [P^m_{m+2j}]^2, &\text{ if $n-m$ is even} \\
& \sum_{j=0}^{(n-m-1)/2} \tilde c_j [P^m_{m+1+2j}]^2, &\text{ if $n-m$ is odd} \\
% & a_{n-2}b_{n-1}b_{n-2}, &\text{ if $j=1$}, \\
\end{aligned}
\right.
\end{align*}
where the coefficients $c_j$ are given by
\begin{equation*} 
c_{j} :=
\left\{
\begin{aligned}
& a_n, &\text{if $j=(n-m)/2$}, \\
& a_{m+2j}b_{n-1}b_{n-2}\dots b_{m+2j}, &\text{ if $j < (n-m)/2$ }.
\end{aligned}
\right.
\end{equation*}
and the coefficients $ \tilde c_j$ are given by
\begin{equation*} 
\tilde c_{j} :=
\left\{
\begin{aligned}
& a_n, &\text{if $j=(n-m-1)/2$}, \\
& a_{m+1+2j}b_{n-1}b_{n-2}\dots b_{m+1+2j}, &\text{ if  $j < (n-m-1)/2$ }.
\end{aligned}
\right.
\end{equation*}
\end{proof}

\section*{Acknowledgements}
We would like to thank Esa~V.~Vesalainen for
several ideas and computations, and without whom this work would not have been possible.
The first author was supported by the Academy of Finland project
312124 and partly by the Estonian Research Council's grant PRG 832.
The second author was supported by the grant PGC2018-094528-B-I00.

\footnotesize


\begin{thebibliography}{99}\setlength{\itemsep}{0pt}

\bibitem{agmon76_asymp_proper_solut_differ_equat} S. Agmon,
  L. H\"ormander, \textit{Asymptotic properties of solutions of
    differential equations with simple characteristics}, Journal
  d'Analyse Math\-ematique, 30(1), (1976)
  1--38. \\\url{http://dx.doi.org/10.1007/bf02786703}

\bibitem{blaasten18_nonrad_sourc_trans_eigen_vanis} E. Bl{\r a}sten,
  \textit{Nonradiating sources and transmission eigenfunctions vanish
    at corners and edges}, SIAM Journal on Mathematical Analysis,
  50(6), 2018.
  6255--6270. \\\url{http://dx.doi.org/10.1137/18m1182048}

\bibitem{blaasten18_radiat_non_radiat_sourc_elast} E. Bl{\r a}sten,
  Y. Lin, \textit{Radiating and non-radiating sources in elasticity},
  Inverse Problems, 35(1), (2018),
  015005. \\\url{http://dx.doi.org/10.1088/1361-6420/aae99e}

\bibitem{blaasten17_vanis_near_corner_trans_eigen} E. Bl{\r a}sten,
  H. Liu, \textit{On vanishing near corners of transmission
    eigenfunctions}, Journal of Functional Analysis, 273(11), (2017),
  3616--3632. \\\url{http://dx.doi.org/10.1016/j.jfa.2017.08.023}

\bibitem{blaasten17_adden_to} E. Bl{\r a}sten, H. Liu,
  \textit{Addendum to: ``On vanishing near corners of transmission
    eigenfunctions''}, arXiv e-prints:1710.08089,
  (2017). \\\url{http://arxiv.org/abs/1710.08089}

\bibitem{blaasten21_scatt_by_curvat_radiat_sourc} E. L. K. Bl{\r
  a}sten, H. Liu, \textit{Scattering by curvatures, radiationless
  sources, transmission eigenfunctions, and inverse scattering
  problems}, SIAM Journal on Mathematical Analysis, 53(4), (2021),
  3801--3837. \\\url{http://dx.doi.org/10.1137/20m1384002}

\bibitem{blaasten21_elect_probl_corner_its_applic} E. Bl{\r a}sten,
  H. Liu, J. Xiao, \textit{On an electromagnetic problem in a corner
    and its applications}, Analysis \& {PDE}, to appear (2021).

\bibitem{blaasten14_corner_alway_scatt} E. Bl{\r a}sten,
  L. P\"aiv\"arinta, J. Sylvester, \textit{Corners Always
    Scatter}. Communications in Mathematical Physics, 331(2), (2014)
  725--753. \\\url{http://dx.doi.org/10.1007/s00220-014-2030-0}

\bibitem{blaasten20_non_scatt_energ_trans_eigen_h} E. Bl{\r a}sten,
  E. V. Vesalainen, \textit{Non-scattering energies and transmission
    eigenvalues in $H^n$}, Annales Academiae Scientiarum Fennicae
  Mathematica, 45(1), (2020)
  547--576. \\\url{http://dx.doi.org/10.5186/aasfm.2020.4522}

\bibitem{boyd73_resid_set_dimen_apoll_packin} D. W. Boyd, \textit{The
  residual set dimension of the apollonian packing}, Mathematika,
  20(2), (1973)
  170--174. \\\url{http://dx.doi.org/10.1112/s0025579300004745}

\bibitem{byerly93_fourier} W. E. Byerly, \textit{An elementary
  treatise on Fourier's series and spherical, cylindrical, and
  ellipsoidal harmonics, with applications to problems in mathematical
  physics}, Boston: Ginn \& Company,
  (1893). \\\url{https://catalog.hathitrust.org/Record/000621162}

\bibitem{cakoni21_trans_eigen} F. Cakoni, D. Colton, H. Haddar,
  \textit{Transmission Eigenvalues}. Notices of the American
  Mathematical Society, 68 (9),
  2021. \\\url{http://dx.doi.org/10.1090/noti2350}

\bibitem{cakoni13_trans_eigen_inver_scatt_theor} F. Cakoni, H. Haddar,
  \textit{Transmission eigenvalues in inverse scattering theory}. In,
  Inverse Problems and Applications: Inside Out. II (pp. 529--580),
  Cambridge Univ. Press, Cambridge, (2013).

\bibitem{cakoni21_singul_almos_alway_scatt} F. Cakoni, M. S. Vogelius,
  \textit{Singularities almost always scatter: regularity results for
    non-scattering inhomogeneities}, arXiv e-prints:2104.05058,
  (2021). \\\url{https://arxiv.org/2104.05058}

\bibitem{cakoni21_corner_scatt_operat_diver_form} F. Cakoni, J. Xiao,
  \textit{On corner scattering for operators of divergence form and
    applications to inverse scattering}, Communications in Partial
  Differential Equations, 46(3), (2021),
  413--441. \\\url{http://dx.doi.org/10.1080/03605302.2020.1843489}

\bibitem{colton96_simpl_method_solvin_inver_scatt} D. Colton,
  A. Kirsch, \textit{A simple method for solving inverse scattering
    problems in the resonance region}, Inverse Problems, 12(4),
  (1996),
  383--393. \\\url{http://dx.doi.org/10.1088/0266-5611/12/4/003}

\bibitem{colton19_inver_acous_elect_scatt_theor} D. Colton, R. Kress,
  \textit{Inverse acoustic and electromagnetic scattering theory},
  Springer, Berlin, (2019).

\bibitem{elschner15_corner_edges_alway_scatt} J. Elschner, G. Hu,
  \textit{Corners and edges always scatter}, Inverse Problems, 31(1),
  (2015),
  015003. \\\url{http://dx.doi.org/10.1088/0266-5611/31/1/015003}

\bibitem{elschner17_acous_scatt_from_corner_edges_circul_cones}
  J. Elschner, G. Hu, \textit{Acoustic scattering from corners, edges
    and circular cones}, Archive for Rational Mechanics and Analysis,
  228(2), (2018),
  653--690. \\\url{http://dx.doi.org/10.1007/s00205-017-1202-4}

\bibitem{falconer03_fract} K. Falconer, \textit{Fractal geometry:
  Mathematical foundations and applications}, John Wiley \& Sons,
  Inc., Hoboken, NJ, (2003).

\bibitem{faraco12_sobol_norm_charac_funct_with} D. Faraco,
  K. M. Rogers, \textit{The Sobolev norm of characteristic functions
    with applications to the Calder\'on inverse problem}, The
  Quarterly Journal of Mathematics, 64(1), (2012),
  133--147. \\\url{http://dx.doi.org/10.1093/qmath/har039}

\bibitem{gallier13_notes_spher_harmon_linear_repres_lie_group}
  J. Gallier, \textit{Notes on spherical harmonics and linear
    representations of lie groups}, Lecture notes, Department of
  Computer and Information Science, University of Pennsylvania,
  (2013). \\\url{https://www.cis.upenn.edu/~cis610/sharmonics.pdf}

\bibitem{grafakos04_class_fourier} L. Grafakos, \textit{Classical and
  modern Fourier analysis}, Pearson Education, Inc., Upper Saddle
  River, N.J., (2004)

\bibitem{grinevich95_trans_poten_at_fixed_energ_dimen_two}
  P. G. Grinevich, R. G. Novikov, \textit{Transparent potentials at
    fixed energy in dimension two. Fixed-energy dispersion relations
    for the fast decaying potentials}, Communications in Mathematical
  Physics, 174(2), (1995),
  409--446. \\\url{http://dx.doi.org/10.1007/bf02099609}

\bibitem{hobson55} E. W. Hobson, \textit{The theory of spherical and
  ellipsoidal harmonics}, Cambridge University Press, (1931).

\bibitem{hoermander05_analy_linear_partial_differ_operat_ii}
  L. H\"ormander, \textit{The analysis of linear partial differential
    operators II: Differential Operators with Constant Coefficients},
  Springer-Verlag, Berlin, (2005).

\bibitem{hu16_shape_ident_inver_medium_scatt} G. Hu, M. Salo,
  E. V. Vesalainen, \textit{Shape identification in inverse medium
    scattering problems with a single far-field pattern}, SIAM Journal
  on Mathematical Analysis, 48(1), (2016),
  152--165. \\\url{http://dx.doi.org/10.1137/15m1032958}

\bibitem{kenig87_unifor_sobol_inequal_unique_contin} C. E. Kenig,
  A. Ruiz, C. D. Sogge, \textit{Uniform Sobolev inequalities and
    unique continuation for second order constant coefficient
    differential operators}, Duke Mathematical Journal, 55(2), (1987),
  329--347. \\\url{http://dx.doi.org/10.1215/s0012-7094-87-05518-9}

\bibitem{kirsch98_charac_shape_scatt_obstac_using} A. Kirsch,
  \textit{Characterization of the shape of a scattering obstacle using
    the spectral data of the far field operator}, Inverse Problems,
  14(6), (1998),
  1489--1512. \\\url{http://dx.doi.org/10.1088/0266-5611/14/6/009}

\bibitem{kirsch08_factor_method_inver_probl} A. Kirsch, N. Grinberg,
  \textit{The factorization method for inverse problems}, Oxford
  University Press, Oxford,
  (2008). \\\url{https://doi.org/10.1093/acprof:oso/9780199213535.001.0001}

\bibitem{morimoto98_analy} M. Morimoto, \textit{Analytic functionals
  on the sphere}, American Mathematical Society, Providence, RI,
  (1998).

\bibitem{newton62_const_poten_from_phase_shift} R. G. Newton,
  \textit{Construction of potentials from the phase shifts at fixed
    energy}, Journal of Mathematical Physics, 3(1), (1962),
  75--82. \\\url{http://dx.doi.org/10.1063/1.1703790}

\bibitem{NIST}\textit{NIST Digital Library of Mathematical Functions},
  \\\url{http://dlmf.nist.gov/}, Release 1.0.9 of 2014-08-29, online
  companion to \cite{olver10_nist_handb_mathem_funct}.

\bibitem{olver10_nist_handb_mathem_funct} F. W. J. Olver,
  D. W. Lozier, R. F. Boisvert, C. W. Clark (eds.): \textit{NIST
    Handbook of Mathematical Functions}, Cambridge University Press,
  2010, print companion to \cite{NIST}.

\bibitem{paeivaerinta10_inver_scatt_magnet_schroe_operat}
  L. P\"aiv\"arinta, M. Salo, G. Uhlmann, \textit{Inverse scattering
    for the magnetic Schr\"odinger operator}, Journal of Functional
  Analysis, 259(7), (2010),
  1771--1798. \\\url{http://dx.doi.org/10.1016/j.jfa.2010.06.002}

\bibitem{paeivaerinta17_stric_convex_corner_scatt} L. P\"aiv\"arinta,
  M. Salo, E. V. Vesalainen, \textit{Strictly convex corners scatter},
  Revista Matem\'atica Iberoamericana, 33(4), (2017),
  1369--1396. \\\url{http://dx.doi.org/10.4171/rmi/975}

\bibitem{regge59_introd_to_compl_orbit_momen} T. Regge,
  \textit{Introduction to complex orbital momenta}, Il Nuovo Cimento,
  14(5), (1959),
  951--976. \\\url{http://dx.doi.org/10.1007/bf02728177}

\bibitem{salo21_free_bound_method_non_scatt_phenom} M. Salo,
  H. Shahgholian, \textit{Free boundary methods and non-scattering
    phenomena}, arXiv e-prints:2106.15154,
  (2021). \\\url{https://arxiv.org/2106.15154}

\bibitem{sickel99_point_multip_lizor_trieb_spaces} W. Sickel,
  \textit{Pointwise multipliers of Lizorkin-Triebel Spaces}. In, The
  Maz'ya Anniversary Collection, Volume 2: Rostock Conference on
  Functional Analysis, Partial Differential Equations and
  Applications, Birkh\"auser Basel, (1999),
  295--321. \\\url{https://doi.org/10.1007/978-3-0348-8672-7_17}

\bibitem{stein71_introd_fourier_euclid} E. M. Stein, G. Weiss,
  \textit{Introduction to Fourier analysis on Euclidean spaces},
  Princeton University Press, Princeton, N.J., (1971).

\bibitem{stratmann96_box_count_dimen_geomet_finit_klein_group}
  B. Stratmann, M. Urba\'nski, \textit{The box-counting dimension for
    geometrically finite Kleinian groups}, Fundamenta Mathematicae,
  149(1), (1996), 83--93.

\bibitem{szego75_orthog_polyn} G. Szeg\H{o}, \textit{Orthogonal
  Polynomials}, American Mathematical Society, Providence, R.I.,
  (1975).

\bibitem{triebel92_theor_funct_spaces_ii} H. Triebel, \textit{Theory
  of function spaces II}, Springer Basel, (1992).

\bibitem{vesalainen14_rellic_type_theor_unboun_domain}
  E. V. Vesalainen, \textit{Rellich type theorems for unbounded
    domains}, Inverse Problems \& Imaging, 8(3), (2014),
  865--883. \\\url{http://dx.doi.org/10.3934/ipi.2014.8.865}

\bibitem{weck03_approx_by_hergl_wave_funct} N. Weck,
  \textit{Approximation by Herglotz wave functions}, Mathematical
  Methods in the Applied Sciences, 27(2), (2003)
  155--162. \\\url{http://dx.doi.org/10.1002/mma.448}

\end{thebibliography}
\end{document}